\newtheorem{prop}{Proposition}
\newtheorem{thm}{Theorem}
\newtheorem*{thmCB*}{Theorem CB}
\newtheorem*{thmD*}{Theorem D}
\newtheorem{cor}{Corollary}
\newtheorem{lemma}{Lemma}
\theoremstyle{definition}
\newtheorem{defn}{Definition}
\newtheorem{example}{Example}
\newtheorem*{exA1*}{Example A.1}
\newtheorem*{exA2*}{Example A.2}
\newtheorem{remark}{Remark}
\newcommand\C{{\mathbb C}}
\newcommand\N{{\mathbb N}}
\newcommand{\Ti}{\Theta}
\newcommand{\om}{{\varpi}}
\newcommand{\ome}{{\omega}}
\newcommand\X{{\mathfrak X}}
\newcommand\ci{{\mathfrak q}}
\newcommand\Z{{\mathbb Z}}
\newcommand\AS{{\mathfrak S}}
\newcommand\BS{{\mathfrak B}}
\newcommand\CS{{\mathfrak C}}
\newcommand\DS{{\mathfrak D}}
\newcommand\fraka{{\mathfrak a}}
\newcommand\frakb{{\mathfrak b}}
\newcommand\frakc{{\mathfrak c}}
\newcommand\frakd{{\mathfrak d}}
\newcommand\al{\alpha}
\newcommand{\be}{\beta}
\newcommand\la{\lambda}
\newcommand\s{{\sigma}}
\newcommand\kk{{\kappa}}
\newcommand\up{{\upsilon}}
\newcommand\supp{{\mathrm{supp}}}
\newcommand\Eta{H}
\newcommand\ssm{\smallsetminus}
\newcommand\noin{\noindent}
\newcommand\eqto{\stackrel{\lower1.5pt\hbox{$\scriptstyle\sim\,$}}\to}
\newcommand\ov{\overline}
\newcommand\hra{\hookrightarrow}
\newcommand\wh{\widehat}
\newcommand\wt{\widetilde}
\newcommand\dis{\displaystyle}
\DeclareMathOperator{\OG}{OG}
\DeclareMathOperator{\f}{{\mathfrak f}}
\DeclareMathOperator{\g}{{\mathfrak g}}
\DeclareMathOperator{\HH}{\mathrm{H}}
\DeclareMathOperator{\type}{\mathrm{type}}
\DeclareMathOperator{\rank}{\mathrm{rank}}
\begin{document}

\title[Degeneracy locus formulas for amenable elements]
{Degeneracy locus formulas for amenable Weyl group elements}

\date{February 13, 2026}

\author{Harry~Tamvakis} 
\address{University of Maryland, Department of
Mathematics, William E. Kirwan Hall, 4176 Campus Drive, 
College Park, MD 20742, USA}
\email{harryt@umd.edu}

\subjclass[2010]{Primary 14N15; Secondary 05E05, 14M15}

\keywords{Schubert calculus, flag manifolds, degeneracy locus formulas,
flagged theta and flagged eta polynomials, amenable signed permutations}

\begin{abstract}
We define a class of amenable Weyl group elements in the Lie types B,
C, and D, which we propose as the analogues of vexillary permutations
in these Lie types.  Our amenable signed permutations index flagged
theta and eta polynomials, which generalize the double theta and eta
polynomials of Wilson and the author. In geometry, we obtain
corresponding formulas for the cohomology classes of symplectic and
orthogonal degeneracy loci.
\end{abstract}

\maketitle

\section{Introduction}

A fundamental problem in Schubert calculus is that of finding
polynomial representatives for the cohomology classes of the Schubert
varieties. In the mid 1990s, Fulton and Pragacz \cite{FP} asked a
relative version of the same question, seeking explicit formulas which
represent the classes of degeneracy loci for the classical
groups, in the sense of \cite{F1, F2, PR}. These loci pull back from
the universal Schubert varieties in a $G/P$-bundle, where $G$ is a
classical Lie group, and $P$ a parabolic subgroup of $G$. As such,
they are indexed by elements $w$ in the Weyl group of $G$, which
describe the relative position of two flags of (isotropic) subspaces
of a fixed (symplectic or orthogonal) vector space.

The above {\em Giambelli} and {\em degeneracy locus} problems were
solved in full generality in \cite{T1}. The answer given there is a
positive Chern class formula which respects the symmetries of the Weyl
group element $w$ and its inverse. The paper \cite{T1} introduced a
new, intrinsic point of view in Schubert calculus, showing that
formulas native to the homogeneous space $G/P$ are possible, for any
parabolic subgroup $P$, and in all classical Lie types. In special
cases, there are alternatives to the general formulas of \cite{T1},
but they must all be equivalent to the formulas found there, modulo an
explicit ideal of relations among the variables involved.
 
The seminal work of Lascoux and Sch\"{u}tzenberger \cite{LS1, LS2} on
Schubert polynomials exposed intrinsic formulas for an important class
of permutations, which they called {\em vexillary}. They defined the
{\em shape} of a general permutation to be the partition obtained by
arranging the entries of its code in decreasing order.  The key
defining property of a vexillary permutation was that its Schubert
polynomial can be expressed as a flagged Schur polynomial indexed by
its shape. In particular, the prototype for vexillary permutations
were the {\em Grassmannian permutations}, whose Schubert polynomials
are the classical Schur polynomials. Our aim in the present paper is
to define a family of {\em amenable signed permutations}, which serve
as the analogues of vexillary permutations in the other classical Lie
types.

There have been two attempts in the past to define a notion of
vexillary signed permutation in the Lie types B, C, and D, by Billey
and Lam \cite{BL} and Anderson and Fulton \cite{AF1}.  These
definitions miss the mark because according to either of them, the
Grassmannian signed permutations are not all vexillary. The revision
\cite{AF2} of \cite{AF1} sought to generalize the latter paper by
incorporating the theta and eta polynomials of Buch, Kresch, and the
author \cite{BKT2, BKT3} and Wilson \cite{W, TW}, which are the
analogues of the Schur polynomials in the aforementioned Lie
types. Unfortunately, although \cite{AF2} is in the right direction,
the proofs given there contain serious errors in all Lie types except
type A, and the main theorem is false, at least in type D.  Moreover,
Anderson and Fulton have not acknowledged that intrinsic Chern class
formulas for the cohomology classes of all the degeneracy loci are
known in any of their writings to date.

Our approach to amenable elements is based on a careful study of how
the corresponding raising operator formulas transform under divided
differences. The outline of the argument is similar to the one found
in Macdonald's notes \cite{M}, but there are important differences in
the details. The proof is new even in type A, where we obtain a new
characterization of vexillary permutations (see below). It is critical
to work with double polynomials throughout and use both left and right
divided differences to maximum effect, starting from the known formula
for the top polynomial, which is indexed by the longest length
element. In types B, C, and D, we employ the Schubert polynomials of
Ikeda, Mihalcea, and Naruse \cite{IMN}, which extend the work of
Billey and Haiman \cite{BH} to a theory suitable for applications to
equivariant cohomology and degeneracy loci.  The paper \cite{T6}
provides another key ingredient: the definition of the {\em shape} of
a signed permutation, which plays the role of Lascoux and
Sch\"{u}tzenberger's shape in the latter Lie types.

The difficulty when working with sequences of divided differences
applied to polynomials lies in choosing which path to follow in the
weak Bruhat order, as the Leibnitz rule tends to destroy any nice
formulas. The papers \cite{TW, T4, T5} showed how divided differences
can be used to obtain combinatorial proofs of the raising operator
formulas for double theta and double eta polynomials, exploiting the
fact that these polynomials behave well under the action of {\em left}
divided differences. Therefore, as long as one remains among
the Grassmannian elements, the choice of path through the left weak
Bruhat order is immaterial. However this surprising property, first observed
in the symplectic case by Ikeda and Matsumura \cite{IM}, completely
fails once one leaves the Grassmannian regime.

To solve this problem, we introduce the notion of {\em leading
  elements} of the Weyl group, which generalize the Grassmannian
elements. In the Lie types A, B, and C, a (signed) permutation
$w=(w_1,\ldots,w_n)$ is leading if the A-code of the extended sequence
$(0,w_1,\ldots,w_n)$ is unimodal. The analogous treatment of type D
elements involves some subtleties, which we discuss later.  The
leading signed permutations are partitioned into equivalence classes
defined by their {\em truncated A-code}. Each of them is in bijection
with the class of Grassmannian elements, where the truncated A-code
vanishes.  The longest length elements within each class give rise to
Pfaffian formulas which are proved using divided differences, starting
from the formula for the longest element in the Weyl group. Following
this, any sequence of left divided differences used to establish the
double theta/eta polynomial formula in the Grassmannian case works --
in the same way! -- to prove a corresponding `factorial' formula for
the elements of the other equivalence classes.

Once the formulas for leading elements are obtained, one can continue
to apply type A divided differences, in a manner that preserves the
shape of these formulas, and proceed a bit further down the left weak
order. We thus arrive at our definition of {\em amenable elements} of
the Weyl group: they are modifications of leading elements, obtained
by multiplying them on the left by suitable permutations. In the
symmetric group, this reflects the (apparently new) fact that the
vexillary permutations are exactly those which can be written as
products $\omega\om$, with $\ell(\omega\om)=\ell(\om)-\ell(\omega)$,
where $\omega$ and $\om$ are $312$-avoiding and $132$-avoiding
permutations, respectively.

Finally, one has to deal with the problem that the above formulas do
not respect the symmetries (that is, the descent sets) of the amenable
Weyl group element involved. This issue was dealt with in \cite{M} by
exploiting the alternating properties of determinants, and a similar
argument works for the Pfaffian examples of \cite{Ka, AF1}.  In the
situation at hand, we require variants of the key technical lemmas
obtained in \cite{BKT2}, which exposed the more subtle alternating
properties of the raising operator expressions that define theta
polynomials.

To understand some of the additional challenges one faces in the even
orthogonal type D, consider first the question of how to define the
shape of an element $w$ in the associated Weyl group $\wt{W}_n$. There
seems to be no consistent way to do this, since e.g.\ the element
$(\ov{3},\ov{1},2)$ has shape $\la=2$ when considered as a
$\Box$-Grassmannian element, but shape $\la=(1,1)$ when considered as
a $1$-Grassmannian element. The definition given in \cite[Def.\ 5]{T6}
prefers the latter shape over the former, but the more difficult
question before us here requires a further refinement.

Our solution is to define the shape of $w$ to be a {\em typed}
partition, where the type is an integer in $\{0,1,2\}$, extending the
corresponding notion for Grassmannian elements from \cite{BKT1}.  The
$\Box$-Grassmannian elements and their Pfaffian formulas are abandoned
entirely; instead, we view them all as $1$-Grassmannian elements!
This fits in well with our previous papers \cite{BKT1, BKT3, T2, T4}
on the orthogonal Grassmannians $\OG(n-k,2n)$ and (double) eta
polynomials, where we assumed $k\geq 1$ from the beginning -- but for
a different reason.

Another obstacle appears when one tries to define the leading elements
of $\wt{W}_n$. It was observed in \cite[Sec.\ 3.3]{T4} that the
compatibility of double eta polynomials with left divided differences
is more delicate than the corresponding fact in types B and C. In
order to preserve this crucial property for the polynomials indexed by
leading elements, we must demand that they are all {\em proper}
elements of $\wt{W}_n$ (Definition \ref{properdef}).  There is no
analogue of this subtle condition in the other classical Lie
types. Once all the definitions which are special to the type D theory
are found, the proof of the main result proceeds in a manner parallel
to the other three types.

\medskip

We now provide the statements of our main theorems. Let $E\to \X$ be a
symplectic or orthogonal vector bundle of rank $N$ on a smooth complex
algebraic variety $\X$. We are given two complete flags of subbundles
of $E$
\[
0 \subset E_1\subset \cdots \subset E_N=E \ \ \, \mathrm{and} \, \ \ 
0 \subset F_1\subset \cdots \subset F_N=E
\]
with $\rank E_r=\rank F_r=r$ for each $r$. If $N=2n$ is even, we have
$E_{n+s}=E_{n-s}^{\perp}$ and $F_{n+s}=F_{n-s}^{\perp}$ for $0\leq s <
n$, while if $N=2n+1$ is odd, we are in the orthogonal case, and have
$E_{n+s}=E_{n+1-s}^{\perp}$ and $F_{n+s}=F_{n+1-s}^{\perp}$ for $1\leq
s \leq n$.  Consider the {\em degeneracy locus} $\X_w\subset \X$,
which we assume has pure codimension $\ell(w)$ in $\X$ (the precise
definition of $\X_w$ is given in Sections \ref{sdl} and \ref{odl}). If
$w$ is an amenable Weyl group element, we obtain formulas for the class
of $\X_w$ in the cohomology ring of $\X$, which are given by 
{\em flagged theta} and {\em flagged eta polynomials}.

Fix an amenable signed permutation $w$ in the hyperoctahedral group
$W_n$.  Let $k\geq 0$ be the first right descent of $w$, list the
entries $w_{k+1},\ldots,w_n$ in increasing order:
\[
u_1<\cdots < u_m < 0 < u_{m+1} < \cdots < u_{n-k}
\]
and define 
\[
\beta:=(u_1+1,\ldots,u_m+1,u_{m+1},\ldots,u_{n-k}),
\]
\[
D:=\{(i,j) \ |\ 1\leq i<j \leq n-k \ \, \text{and} \ \, 
u_i+ u_j < 0 \},
\]
and the raising operator expression
\[
R^D: = \prod_{i<j}(1-R_{ij})\prod_{i<j\, :\, (i,j)\in D}(1+R_{ij})^{-1}.
\]

The A-code of $w$ is the sequence $\gamma$ with $\gamma_i:=
\#\{j>i\ |\ w_j<w_i\}$. Define two partitions $\nu$ and $\xi$ by
setting $\nu_j:= \#\{i\ |\ \gamma_i\geq j\}$ and
$\xi_j:=\#\{i\ |\ \gamma_{k+i}\geq j\}$ for each $j\geq 1$. Following
\cite{T6}, the shape of $w$ is the partition $\la=\mu+\nu$, where
$\mu:=(-u_1,\ldots,-u_m)$.  If $\ell$ denotes the length of $\la$, we say
that $\ci\in [1,\ell]$ is a {\em critical index} if $\beta_{\ci+1}>
\beta_\ci+1$, or if $\la_\ci>\la_{\ci+1}+1$ (respectively,
$\la_\ci>\la_{\ci+1}$) and $\ci<m$ (respectively, $\ci>m$). Define two
sequences $\f$ and $\g$ of length $\ell$ by setting
\[
\f_j:=k+\max(i\ |\ \gamma_{k+i}\geq j)
\]
for each $j$, and $\g_j:=\f_\ci+\beta_\ci-\xi_\ci-k,$ where $\ci$ is
the least critical index such that $\ci\geq j$. The sequences $\f$ and
$\g$ are the {\em right} and {\em left flags} of $w$, and $\f$
(respectively $|\g|$) consists of right (respectively left) descents
of $w$. Define the sequence $\ov{\g}$ by setting $\ov{\g}_j:=\g_j$, if
$\g_j\geq 0$, and $\ov{\g}_j:=\g_j-1$, if $\g_j<0$.

\begin{thmCB*} Let $w$ be an amenable element of $W_n$.

\medskip
\noin
{\em (a)} If $E$ is a symplectic vector bundle, then we have
\begin{equation}
\label{introCChern}
[\X_w] = \Theta_w(E-E_{n-\f}-F_{n+\g}) =
R^D\, c_{\la}(E-E_{n-\f}-F_{n+\g})
\end{equation}
in the cohomology ring $\HH^*(\X)$.

\medskip
\noin
{\em (b)} If $E$ is an odd orthogonal vector bundle, then we have
\begin{equation}
\label{introBChern}
[\X_w] = 2^{-m} \Theta_w(E-E_{n-\f}-F_{n+1+\ov{\g}}) =
2^{-m}R^D\, c_{\la}(E-E_{n-\f}-F_{n+1+\ov{\g}})
\end{equation}
in the cohomology ring $\HH^*(\X)$.
\end{thmCB*}
\noin Following \cite{TW}, the Chern polynomial in equation
(\ref{introCChern}) is interpreted as the image of $R^D
\mathfrak{c}_\la$ under the $\Z$-linear map which sends the
noncommutative monomial
$\frakc_\al=\frakc_{\al_1}\frakc_{\al_2}\cdots$ to $\prod_j
c_{\al_j}(E-E_{n-\f_j}-F_{n+\g_j})$, for every integer sequence $\al$.
The Chern polynomial in equation (\ref{introBChern}) is defined
similarly.

As discussed above, the main theorem in the even orthogonal case
involves nuances in its formulation and its proof; we therefore state it separately.

\begin{thmD*}
Let $w$ be an amenable element of $\wt{W}_n$.  If $E$ is an even
orthogonal vector bundle, then we have
\begin{equation}
\label{introDChern}
[\X_w] = \Eta_w(E-E_{n-\f}-F_{n+\g}) =
2^{-m}R^D \star \wh{c}_{\la}(E-E_{n-\f}-F_{n+\g})
\end{equation}
in the cohomology ring $\HH^*(\X)$.
\end{thmD*}
\noin We refer the reader to Sections \ref{LtBD} and \ref{Dtheory} for
the precise meaning and basic properties of the terms which appear in
equation (\ref{introDChern}).

This paper is organized as follows. Section \ref{prelims} contains
background material on divided differences and Schubert polynomials,
and defines the shape of a (signed) permutation in all the classical
types. Section \ref{ro} deals with raising operators and provides
variants of the lemmas from \cite{BKT2} that we require here. Sections
\ref{aeA}, \ref{Ctheory}, and \ref{Dtheory} define and study amenable
elements and their applications in types A, C, and B/D,
respectively. In particular, we give our notion of flagged theta
and flagged eta polynomials; these are indexed by amenable Weyl group
elements. Finally, Appendix \ref{AFexs} contains
counterexamples to several statements in \cite{AF2}.

I thank Andrew Kresch for encouraging me to work on this article,
providing useful comments, and, more importantly, for being a good
friend. Thanks are also due to the referees for a careful reading of
the paper and suggestions which helped to improve the exposition and
to simplify the proof of Theorem \ref{amenvexthm}.

\section{Preliminaries}
\label{prelims}

This section gathers together background material on the divided 
differences and Schubert polynomials used in this work. We also 
discuss the notion of the shape of a (signed) permutation. Our
notation is compatible with that found in \cite{T6}.

\subsection{Lie type A}

Throughout this paper we will employ integer sequences
$\al=(\al_1,\al_2,\ldots)$, which are assumed to have finite support,
and we identify with integer vectors. The integer sequence $\al$ is a
{\em composition} if $\al_j\geq 0$ for all $j$. A weakly decreasing
composition is called a {\em partition}.  If $\la$ is a partition, the
{\em length} of $\la$ is the integer $\ell(\la):=\#\{i\ |\ \la_i\neq
0\}$, and the {\em conjugate} of $\la$ is the partition $\la'$ with
$\la'_j:=\#\{i\ |\ \la_i\geq j\}$ for all $j\geq 1$. As is customary,
we identify partitions with their Young diagrams of boxes, arranged in
left justified rows.  An inclusion $\mu\subset\la$ of partitions
corresponds to the containment of their respective diagrams; in this
case, the skew diagram $\la/\mu$ is the set-theoretic difference
$\la\ssm\mu$.  For each integer $r\geq 1$, let
$\delta_r:=(r,r-1,\ldots,1)$, $\delta^{\vee}_r:=(1,2,\ldots,r)$, and
set $\delta_0:=0$. Denote by $\epsilon_r$ the sequence whose $r$th
term is 1 and all other terms are zero.

The symmetric group $S_n$ is generated by the simple transpositions
$s_i=(i,i+1)$ for $1\leq i \leq n-1$.  There is a natural embedding of
$S_n$ in $S_{n+1}$ by adjoining $n+1$ as a fixed point, and we let
$S_\infty:=\cup_n S_n$.  We will write a permutation $\om\in S_n$
using one line notation, as the word $(\om_1,\ldots,\om_n)$ where
$\om_i=\om(i)$.  

The {\em length} of a permutation $\om$, denoted $\ell(\om)$, is the
least integer $r$ such that we have an expression $\om=s_{i_1} \cdots
s_{i_r}$. The word $s_{i_1}\cdots s_{i_r}$ is called a {\em reduced
  decomposition} for $\om$.  An element $\om\in S_\infty$ has a {\em
  left descent} (respectively, a {\em right descent}) at position
$i\geq 1$ if $\ell(s_i\om)<\ell(\om)$ (respectively, if $\ell(\om
s_i)<\ell(\om)$).  The permutation $\om=(\om_1,\om_2,\ldots)$ has a
right descent at $i$ if and only if $\om_i>\om_{i+1}$, and a left
descent at $i$ if and only if $\om^{-1}(i)>\om^{-1}(i+1)$.

The {\em code} $\gamma=\gamma(\om)$ of a permutation $\om\in S_n$ is
the sequence $\{\gamma_i\}$ with
$\gamma_i:=\#\{j>i\ |\ \om_j<\om_i\}$.  The code $\gamma$ determines
$\om$, as follows. We have $\om_1=\gamma_1+1$, and for $i>1$, $\om_i$
is the $(\gamma_i+1)$st element in the complement of
$\{\om_1,\ldots,\om_{i-1}\}$ in the sequence $(1,\ldots,n)$.
Following \cite{LS1, M}, the {\em shape} $\la=\la(\om)$ of $\om$ is the
partition whose parts are the non-zero entries $\gamma_i$ of the code
$\gamma(\om)$, arranged in weakly decreasing order. We have 
$|\la|:=\sum_i \la_i=\sum_i \gamma_i = \ell(\om)$.

For any integer $p\geq 0$ and sequence of variables
$Z:=(z_1,z_2,\ldots)$, the elementary and complete symmetric functions
$e_p(Z)$ and $h_p(Z)$ are defined by the generating series
\[
\prod_{i=1}^{\infty}(1+z_it) = \sum_{p=0}^{\infty}
e_p(Z)t^p \ \ \ \text{and} \ \ \ 
\prod_{i=1}^{\infty}(1-z_it)^{-1} = \sum_{p=0}^{\infty}
h_p(Z)t^p,
\]
respectively.  If $r\geq 1$ then we let
$e^r_p(Z):=e_p(z_1,\ldots,z_r)$ and $h^r_p(Z):=h_p(z_1,\ldots,z_r)$
denote the polynomials obtained from $e_p(Z)$ and $h_p(Z)$ by setting
$z_i=0$ for all $i>r$. Let $e^0_p(Z)=h^0_p(Z):=\delta_{0p}$, where
$\delta_{0p}$ denotes the Kronecker delta, and for $r<0$, define
$h^r_p(Z):=e^{-r}_p(Z)$ and $e^r_p(Z):=h^{-r}_p(Z)$.

Let $X:=(x_1,x_2,\ldots)$ and $Y:=(y_1,y_2,\ldots)$ be two sequences
of independent variables. There is an action of $S_\infty$ on
$\Z[X,Y]$ by ring automorphisms, defined by letting the simple
reflections $s_i$ act by interchanging $x_i$ and $x_{i+1}$ while
leaving all the remaining variables fixed. Define the {\em divided
  difference operator} $\partial_i^x$ on $\Z[X,Y]$ by
\[
\partial_i^xf := \frac{f-s_if}{x_i-x_{i+1}} \ \ \ \text{for $i\geq 1$}.
\]
Consider the ring involution $\pi:\Z[X,Y]\to\Z[X,Y]$ determined by
$\pi(x_i)=-y_i$ and $\pi(y_i)=-x_i$ for each $i$, and set
$\partial_i^y:=\pi\partial_i^x\pi$.

For any $p,r,s\in \Z$, define the polynomial ${}^rh^s_p$ by
\[
{}^rh^s_p:=\sum_{i=0}^p h^r_i(X)e_{p-i}^s(-Y).
\]
We have the following basic lemma.

\begin{lemma}
\label{ddylemmA}
Suppose that $p,r,s\in \Z$.
For all $i\geq 1$, we have
\[
\partial^x_i ({}^rh_p^s)= 
\begin{cases}
{}^{r+1}h_{p-1}^s & \text{if $r=\pm i$}, \\
0 & \text{otherwise}
\end{cases}
\ \quad \mathrm{and} \ \quad
\partial^y_i ({}^rh_p^s)= 
\begin{cases}
{}^rh_{p-1}^{s-1} & \text{if $s=\pm i$}, \\
0 & \text{otherwise}.
\end{cases}
\]
\end{lemma}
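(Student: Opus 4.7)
The plan is to reduce the statement to a generating-function computation. Introduce
\[
G_r(t) := \sum_{p \geq 0} h^r_p(X)\, t^p,\qquad H_s(t) := \sum_{p \geq 0} e^s_p(-Y)\, t^p,
\]
so that $\sum_p {}^rh^s_p\, t^p = G_r(t)\, H_s(t)$. Unpacking the definitions of $h^r_p$ and $e^s_p$ for $r,s$ of either sign, these series are the finite products
\[
G_r(t) = \prod_{k=1}^{r}(1-x_k t)^{-1}\ \ (r\geq 0),\qquad G_r(t) = \prod_{k=1}^{-r}(1+x_k t)\ \ (r<0),
\]
\[
H_s(t) = \prod_{k=1}^{s}(1-y_k t)\ \ (s\geq 0),\qquad H_s(t) = \prod_{k=1}^{-s}(1+y_k t)^{-1}\ \ (s<0),
\]
with the empty product understood to be $1$.

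For the first identity, it suffices to compute $\partial^x_i G_r(t)$, since $\partial^x_i$ is $\Z[Y]$-linear. When $|r|\neq i$, every factor in the product for $G_r(t)$ is either independent of $\{x_i,x_{i+1}\}$ or symmetric in them, so $\partial^x_i G_r=0$. When $r=\pm i$, the unique asymmetric factor is $(1-x_it)^{-1}$ or $(1+x_it)$ respectively, and direct computation gives
\[
\partial^x_i(1-x_it)^{-1}=\frac{t}{(1-x_it)(1-x_{i+1}t)},\qquad \partial^x_i(1+x_it)=t.
\]
Multiplying by the remaining (symmetric) factors of $G_r$, one finds $\partial^x_i G_r(t) = t\, G_{r+1}(t)$ in both cases; extracting the coefficient of $t^p$ in $t\, G_{r+1}(t)\, H_s(t)$ produces ${}^{r+1}h^s_{p-1}$.

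The second identity proceeds in the same way after recording the sign convention. Conjugating by $\pi$ shows that $\partial^y_i f = (s^y_i f - f)/(y_i - y_{i+1})$, where $s^y_i$ interchanges $y_i$ and $y_{i+1}$; the sign is opposite to that in the definition of $\partial^x_i$ because $\pi(y_i) = -x_i$. The same symmetry argument reduces us to $|s|=i$, and the short computations
\[
\partial^y_i(1-y_it) = t,\qquad \partial^y_i(1+y_it)^{-1}=\frac{t}{(1+y_it)(1+y_{i+1}t)}
\]
combine with the surrounding factors of $H_s$ to give $\partial^y_i H_s(t) = t\, H_{s-1}(t)$ in both cases. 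Extracting the coefficient of $t^p$ in $G_r(t)\, t\, H_{s-1}(t)$ yields ${}^rh^{s-1}_{p-1}$.

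The main thing to verify carefully is the sign convention for $\partial^y_i$; once this is settled, the lemma reduces to the four small explicit calculations above. The mildly asymmetric outcome --- the shifts $r\mapsto r+1$ versus $s\mapsto s-1$ --- reflects the asymmetric roles of $h$ and $e$ in the definition of ${}^rh^s_p$, combined with the opposite sign in $\partial^y_i$.
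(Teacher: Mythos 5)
Your proof is correct and complete. The paper states this lemma without proof (calling it ``basic''), so there is no paper argument to compare against, but your generating-function approach is the natural one: the factorizations of $G_r(t)$ and $H_s(t)$ into finite products are correctly derived from the definitions of $h^j_r$ and $e^j_r$ for $j$ of either sign, the symmetry argument correctly kills the cases $|r|\neq i$ (resp.\ $|s|\neq i$), the four small computations of $\partial^x_i$ and $\partial^y_i$ applied to the single asymmetric factor are right (including the sign convention $\partial^y_i f = (s^y_i f - f)/(y_i-y_{i+1})$ you correctly derived from $\partial^y_i = \pi\partial^x_i\pi$), and extracting the coefficient of $t^p$ from $tG_{r+1}(t)H_s(t)$ and $G_r(t)\,tH_{s-1}(t)$ gives the stated shifts. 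Your closing remark correctly identifies why the shift is $r\mapsto r+1$ on one side but $s\mapsto s-1$ on the other.
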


The double Schubert polynomials $\AS_\om$ for $\om\in S_{\infty}$ of
Lascoux and Sch\"utzenberger \cite{Las, LS1} are the unique family of
polynomials in $\Z[X,Y]$ such that
\begin{equation}
\label{ddAeqin}
\partial_i^x\AS_\om = \begin{cases}
\AS_{\om s_i} & \text{if $\ell(\om s_i)<\ell(\om)$}, \\ 
0 & \text{otherwise},
\end{cases}
\quad
\partial_i^y\AS_\om = \begin{cases}
\AS_{s_i\om} & \text{if $\ell(s_i\om)<\ell(\om)$}, \\ 
0 & \text{otherwise},
\end{cases}
\end{equation}
for all $i\geq 1$, together with the condition that the constant term
of $\AS_\om$ is $1$ if $\om=1$, and $0$ otherwise.

\subsection{Lie type C}
The Weyl group for the root system of type $\text{C}_n$ is the group
of signed permutations on the set $\{1,\ldots,n\}$, denoted $W_n$. The
group $W_n$ is generated by the simple transpositions $s_i=(i,i+1)$
for $1\leq i \leq n-1$ together with the sign change $s_0$, which
fixes all $j\in [2,n]$ and sends $1$ to $\ov{1}$ (a bar over an
integer here means a negative sign). We write the elements of $W_n$ as
$n$-tuples $(w_1,\ldots, w_n)$, where $w_i:=w(i)$ for each $i\in
[1,n]$.  There is a natural embedding of $W_n$ in $W_{n+1}$ by
adjoining $n+1$ as a fixed point, and we let $W_\infty:=\cup_n W_n$.
The symmetric groups $S_n$ and $S_\infty$ are the subgroups of $W_n$
and $W_\infty$, respectively, generated by the reflections $s_i$ for
$i$ positive.  The {\em length} $\ell(w)$ and the reduced
decompositions of an element $w\in W_\infty$ is defined as in type
A. We have
\[
\ell(w) = \#\{i<j\ |\ w_i>w_j\} + \sum_{i\, :\, w_i <0}|w_i|
\]
for every $w\in W_\infty$.

An element $w\in W_\infty$ has a {\em right descent} (respectively, a
{\em left descent}) at position $i\geq 0$ if $\ell(ws_i)<\ell(w)$
(respectively, if $\ell(s_iw)<\ell(w)$). The signed permutation
$w=(w_1,w_2,\ldots)$ has a right descent at 0 if and only if $w_1<0$,
and a right descent at $i \geq 1$ if and only if $w_i>w_{i+1}$.  The
element $w$ has a left descent at 0 if and only if $w^{-1}(1)<0$, that
is, $w=(\cdots \ov{1} \cdots)$. The element $w$ has a left descent at
$i \geq 1$ if and only if $w^{-1}(i)>w^{-1}(i+1)$, that is, $w$ has one of
the following four forms:
\[
(\cdots i+1 \cdots i \cdots), \quad
(\cdots i \cdots \ov{i+1} \cdots), \quad
(\cdots \ov{i+1} \cdots i \cdots), \quad 
(\cdots \ov{i} \cdots \ov{i+1} \cdots).
\]

Let $w\in W_\infty$ be a signed permutation. Following
\cite[Def.\ 2]{T6}, the strict partition $\mu=\mu(w)$ is the one whose
parts are the absolute values of the negative entries of $w$, arranged
in decreasing order. The {\em A-code} of $w$ is the sequence
$\gamma=\gamma(w)$ with $\gamma_i:=\#\{j>i\ |\ w_j<w_i\}$. We define a
partition $\nu=\nu(w)$ by
\[
\nu_j = \#\{i\ |\ \gamma_i\geq j\}, \ \ \text{for all $j\geq 1$}.
\]
Finally, the {\em shape} of $w$ is the partition
$\la(w):=\mu(w)+\nu(w)$.  The element $w$ is uniquely determined by
$\mu(w)$ and $\gamma(w)$, and we have $|\la(w)|=\ell(w)$.

\begin{example}
(a) For the signed permutation $w := (\ov{5},3, \ov{4}, 7, \ov{1}, \ov{6}, 2)$ 
in $W_7$, we obtain $\mu = (6,5,4,1)$, $\gamma=(1, 4, 1, 3, 1,0,0)$, 
$\nu= (5,2, 2, 1)$, and $\la = (11, 7, 6, 2)$.

\smallskip \noin (b) Let $k\geq 0$. An element $w\in W_\infty$ is
$k$-Grassmannian if $\ell(ws_i)>\ell(w)$ for all $i\neq k$. This is
equivalent to the conditions
\[
0<w_1<\cdots < w_k \quad \mathrm{and} \quad  w_{k+1}<w_{k+2}<\cdots.
\]
If $w$ is a $k$-Grassmannian element of $W_\infty$, then $\la(w)$ is
the $k$-strict partition associated to $w$ in \cite[Sec.\ 6.1]{BKT2}.

\smallskip
\noin (c) Suppose that the first right descent of $w\in W_n$ is $k\geq
0$, and let $m=\ell(\mu)$ and $\ell=\ell(\la)$. Then $\mu$ is a strict
partition and $\nu\subset k^{n-k}+\delta_{n-k-1}$, with $\nu_j\geq k$
for all $j\in [1,m]$. It follows that
\[
\la_1>\la_2>\cdots > \la_m >\max(\la_{m+1},k) 
\geq \la_{m+1}\geq \la_{m+2}\geq \cdots \geq \la_{\ell}.
\]
\end{example}

\begin{lemma}[\cite{M, T6}]
\label{Mcdlemma}
If $i\geq 1$, $w\in W_\infty$, and $\gamma=\gamma(w)$, then 
\[
\gamma_i>\gamma_{i+1} \Leftrightarrow w_i>w_{i+1} \Leftrightarrow
\ell(ws_i)=\ell(w) -1.
\]
If any of the above conditions hold, then 
\[
\gamma(ws_i) = (\gamma_1,\ldots, \gamma_{i-1},
\gamma_{i+1},\gamma_i-1,\gamma_{i+2},\gamma_{i+3},\ldots).
\]
\end{lemma}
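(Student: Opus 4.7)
The plan is to establish the two biconditionals and then compute $\gamma(ws_i)$, all directly from the definition $\gamma_i=\#\{j>i\,:\,w_j<w_i\}$. The equivalence $w_i>w_{i+1}\Leftrightarrow\ell(ws_i)=\ell(w)-1$ (for $i\geq 1$) is immediate from the preamble, which characterizes a right descent of $w$ at a position $i\geq 1$ as the condition $w_i>w_{i+1}$, together with the standard fact that multiplication by a simple reflection changes length by exactly $\pm 1$.

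For $\gamma_i>\gamma_{i+1}\Leftrightarrow w_i>w_{i+1}$, I split off the contribution of position $i+1$ to $\gamma_i$ and write
\[
\gamma_i=\mathbf{1}_{[w_{i+1}<w_i]}+\#\{j>i+1\,:\,w_j<w_i\},
\]
while $\gamma_{i+1}=\#\{j>i+1\,:\,w_j<w_{i+1}\}$. If $w_i>w_{i+1}$, then $\{j>i+1\,:\,w_j<w_{i+1}\}\subseteq\{j>i+1\,:\,w_j<w_i\}$, whence $\gamma_i\geq 1+\gamma_{i+1}>\gamma_{i+1}$. If $w_i<w_{i+1}$, the inclusion reverses and the indicator vanishes, so $\gamma_i\leq\gamma_{i+1}$. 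Since $w_i\neq w_{i+1}$, these two cases suffice and the equivalence follows.

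For the code change, assume $w_i>w_{i+1}$ and set $w':=ws_i$, so $w'_i=w_{i+1}$, $w'_{i+1}=w_i$, and $w'_k=w_k$ for $k\notin\{i,i+1\}$. For such $k$, the unordered set $\{w'_j\,:\,j>k\}$ agrees with $\{w_j\,:\,j>k\}$ (only positions $i,i+1$ have their values swapped, and either both positions lie above $k$ or both lie at or below $k$), and $w'_k=w_k$, so $\gamma'_k=\gamma_k$. At position $i$, using $w'_{i+1}=w_i>w_{i+1}=w'_i$, we obtain $\gamma'_i=\#\{j>i+1\,:\,w_j<w_{i+1}\}=\gamma_{i+1}$. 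At position $i+1$, directly $\gamma'_{i+1}=\#\{j>i+1\,:\,w_j<w_i\}=\gamma_i-1$ by the displayed identity. The whole argument is elementary bookkeeping; since the definition of $\gamma$ uses only integer comparisons among the entries $w_j$, the signed-permutation case runs identically to type A and requires no special handling of barred entries, which is the only place where one might anticipate any friction.
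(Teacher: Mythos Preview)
Your proof is correct. The paper does not supply its own proof of this lemma: it is stated with a citation to \cite{M, T6} and used as a known fact. Your argument is the standard direct verification from the definition of the A-code, and every step is sound, including the observation that nothing in the definition of $\gamma$ distinguishes barred from unbarred entries, so the computation carries over verbatim from $S_\infty$ to $W_\infty$.
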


Let $c:=(c_1,c_2,\ldots)$ be a sequence of commuting
variables, and set $c_0:=1$ and $c_p:=0$ for $p<0$. Consider the graded
ring $\Gamma$ which is the quotient of the polynomial ring $\Z[c]$
modulo the ideal generated by the relations
\begin{equation}
\label{basicrelsN}
c_pc_p+2\sum_{i=1}^p(-1)^ic_{p+i}c_{p-i}=0, \ \ \ \text{for all $p\geq 1$}.
\end{equation}

Let $X:=(x_1,x_2,\ldots)$ and $Y:=(y_1,y_2,\ldots)$ be two sequences
of variables. Following \cite{BH, IMN}, there is an action of
$W_\infty$ on $\Gamma[X,Y]$ by ring automorphisms, defined as
follows. The simple reflections $s_i$ for $i\geq 1$ act by
interchanging $x_i$ and $x_{i+1}$ while leaving all the remaining
variables fixed. The reflection $s_0$ maps $x_1$ to $-x_1$, fixes the
$x_j$ for $j\geq 2$ and all the $y_j$, and satisfies
\[
s_0(c_p) := c_p+2\sum_{j=1}^p x_1^jc_{p-j} \ \ \text{for all $p\geq 1$}.
\]
For each $i\geq 0$, define the {\em divided difference operator}
$\partial_i^x$ on $\Gamma[X,Y]$ by
\[
\partial_0^xf := \frac{f-s_0f}{-2x_1}, \qquad
\partial_i^xf := \frac{f-s_if}{x_i-x_{i+1}} \ \ \ \text{for $i\geq 1$}.
\]
Consider the ring involution $\varphi:\Gamma[X,Y]\to\Gamma[X,Y]$
determined by
\[
\varphi(x_j) = -y_j, \qquad
\varphi(y_j) = -x_j, \qquad
\varphi(c_p)=c_p
\]
and set $\partial_i^y:=\varphi\partial_i^x\varphi$ for each $i\geq 0$.
The right and left divided difference operators $\partial^x_i$ and
$\partial^y_i$ on $\Gamma[X,Y]$ satisfy the right and left Leibnitz
rules
\begin{equation}
\label{LeibRN}
\partial^x_i(fg) = (\partial^x_if)g+(s_if)\partial^x_ig \quad \ \mathrm{and} \quad \
\partial^y_i(fg) = (\partial^y_if)g+(s^y_if)\partial^y_ig,
\end{equation}
where $s_i^y:=\varphi s_i \varphi$, for every $i\geq 0$.

For any $p,r,s\in \Z$, define the polynomial ${}^rc^s_p$ by
\[
{}^rc^s_p:=\sum_{i=0}^p\sum_{j=0}^p c_{p-i-j}e^{r}_i(X)h_j^s(-Y).
\]
We have the following basic lemma, which stems from \cite[Sec.\ 5.1]{IM}.

\begin{lemma}
\label{ddlemN}
{\em (a)} Suppose that $p,r,s\in \Z$. For all $i\geq 0$, we have
\[
\partial^x_i ({}^rc_p^s)= 
\begin{cases}
{}^{r-1}c_{p-1}^s & \text{if $r=\pm i$}, \\
0 & \text{otherwise}.
\end{cases}
\ \quad \mathrm{and} \ \quad
\partial^y_i ({}^rc_p^s)= 
\begin{cases}
{}^rc_{p-1}^{s+1} & \text{if $s=\pm i$}, \\
0 & \text{otherwise}.
\end{cases}
\]

\medskip
\noin
{\em (b)} For all $i\geq 1$, $r,s\geq 0$, and indices $p$ and $q$, 
we have
\[
\partial^y_i({}^rc_p^{-i}\,{}^sc_q^i) = {}^rc_{p-1}^{-i+1}\,{}^sc_q^{i+1} +
{}^rc_p^{-i+1}\,{}^sc_{q-1}^{i+1}.
\]
\end{lemma}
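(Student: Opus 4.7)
The plan is to reduce everything to generating function manipulations. I would introduce $C(t):=\sum_{p\geq 0} c_p t^p$, $E^r(X;t):=\sum_a e^r_a(X)t^a$, and $H^s(-Y;t):=\sum_b h^s_b(-Y)t^b$, so that $\sum_{p} {}^rc^s_p\, t^p = C(t)\, E^r(X;t)\, H^s(-Y;t)$. For $r\geq 0$ one has $E^r(X;t)=\prod_{k=1}^{r}(1+x_kt)$, and for $r<0$, $E^r(X;t)=\prod_{k=1}^{-r}(1-x_kt)^{-1}$; the factor $H^s(-Y;t)$ admits an analogous product with $(1+y_kt)^{\pm 1}$ terms. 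Each identity in the lemma would then be obtained by applying the appropriate divided difference to these generating functions and extracting the coefficient of $t^p$ (or $t^pu^q$).

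For part (a) with $i\geq 1$, both $C(t)$ and $H^s(-Y;t)$ are $s_i$-invariant, so by Leibniz the computation reduces to $\partial^x_i E^r(X;t)$. One checks that $E^r$ contains both of $x_i,x_{i+1}$ symmetrically, neither of them, or exactly one of them depending on whether $|r|>i$, $|r|<i$, or $|r|=i$; in the last case a short telescoping yields $\partial^x_i E^r(X;t)=t\,E^{r-1}(X;t)$. For the $i=0$ case I would exploit that $s_0 C(t)=C(t)\cdot(1+x_1t)/(1-x_1t)$, while $s_0 E^r(X;t)=E^r(X;t)\cdot(1-x_1t)/(1+x_1t)$ whenever $r\neq 0$; the two twists cancel, giving $\partial^x_0{}^rc^s_p=0$ there. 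When $r=0$ the twist survives and a direct computation gives $\partial^x_0(C(t)H^s(-Y;t))=t\cdot C(t)E^{-1}(X;t)H^s(-Y;t)$, yielding ${}^{-1}c^s_{p-1}$. The $\partial^y_i$ half follows by transport under $\varphi$: comparing generating functions shows $\varphi({}^rc^s_p)={}^{-s}c^{-r}_p$, so $\partial^y_i=\varphi\partial^x_i\varphi$ converts the $\partial^x_i$ result into the $\partial^y_i$ statement with $r$ and $-s$ exchanged.

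For part (b) I would again pass to bivariate generating functions by setting $F(t):=\sum_p {}^rc^{-i}_p t^p$ and $G(u):=\sum_q {}^sc^i_q u^q$. Computing $\partial^y_i(F(t)G(u))=\varphi\partial^x_i\varphi(F(t)G(u))$, the only $s_i$-sensitive piece after the first application of $\varphi$ is $\prod_{k=1}^i(1+x_kt)\prod_{k=1}^i(1-x_ku)^{-1}$, and applying the rational-function identity
\[
\frac{1+at}{1-au}-\frac{1+bt}{1-bu}=\frac{(a-b)(t+u)}{(1-au)(1-bu)}
\]
with $a=x_i$, $b=x_{i+1}$ yields
\[
\partial^x_i\bigl[\textstyle\prod_{k=1}^i(1+x_kt)\prod_{k=1}^i(1-x_ku)^{-1}\bigr]=(t+u)\prod_{k=1}^{i-1}(1+x_kt)\prod_{k=1}^{i+1}(1-x_ku)^{-1}.
\]
After a second application of $\varphi$, the right-hand side rearranges to $(t+u)\bigl[\sum_p{}^rc^{-i+1}_p t^p\bigr]\bigl[\sum_q{}^sc^{i+1}_q u^q\bigr]$, and extracting the coefficient of $t^pu^q$ produces the first equality ${}^rc^{-i+1}_{p-1}{}^sc^{i+1}_q+{}^rc^{-i+1}_p{}^sc^{i+1}_{q-1}$. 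The second equality is just the expansion of $(1+R_{12})$ acting on the bivariate monomial ${}^{(r,s)}c^{(-i+1,i+1)}_{(p-1,q)}$.

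The main obstacle is the case bookkeeping in (a): the signs of $r$ and $s$ determine whether $E^r$ and $H^s$ are finite products or reciprocals, and the $i=0$ case requires the nontrivial action $s_0(c_p)=c_p+2\sum_{j\geq 1} x_1^j c_{p-j}$ to cancel against the twist in $E^r$. The generating-function formalism handles all of this uniformly by reducing the cancellation to the single factorization identity $s_0(C(t))\cdot s_0(E^r(X;t))=C(t)E^r(X;t)$ whenever $r\neq 0$. Once the generating functions are in place, part (b) becomes a one-line rational-function calculation followed by coefficient extraction.
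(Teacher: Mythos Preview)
Your proof is correct. The paper itself does not supply a proof of this lemma; it only attributes the statement to \cite[Sec.\ 5.1]{IM} and moves on. Your generating-function argument is a clean, self-contained verification: packaging $\sum_p{}^rc^s_p\,t^p=C(t)E^r(X;t)H^s(-Y;t)$ reduces part (a) to computing $\partial^x_i$ on a single factor, and the key $i=0$ cancellation $s_0(C(t))\cdot s_0(E^r(X;t))=C(t)E^r(X;t)$ for $r\neq 0$ is exactly right, as is the transport via $\varphi({}^rc^s_p)={}^{-s}c^{-r}_p$ to obtain the $\partial^y_i$ half. For part (b), the rational-function identity you use is the natural way to handle the bivariate case, and coefficient extraction gives the desired formula; the hypothesis $r,s\geq 0$ in the statement is not actually needed for your argument, which is a small bonus.
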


Suppose $r,s\geq 0$, and let $\frakc_p:={}^rc^{-s}_p$ for each $p\in \Z$.
We then have the relations
\begin{equation}
\label{fundrelsC}
\frakc_p\frakc_p+2\sum_{i=1}^p (-1)^i \frakc_{p+i}\frakc_{p-i}=0 \ \ \text{for
  all} \ \, p > r+s
\end{equation}
in $\Gamma[X,Y]$.  Indeed, if ${\mathcal C}(t):=\sum_{p=0}^{\infty}
\frakc_pt^p$ is the generating function for the $\frakc_p$, we have
\[
{\mathcal C}(t) = \prod_{i=1}^r(1+x_it)\prod_{j=1}^s(1-y_jt)
\left(\sum_{p=0}^{\infty}c_pt^p\right)
\]
and hence
\[
{\mathcal C}(t){\mathcal C}(-t) = 
\prod_{i=1}^r(1-x^2_it^2)\prod_{j=1}^s(1-y^2_jt^2),
\]
which is a polynomial in $t$ of degree $2(r+s)$.

The type C double Schubert polynomials $\CS_w$ for $w\in W_{\infty}$
of Ikeda, Mihalcea, and Naruse \cite{IMN} are the unique family of
elements of $\Gamma[X,Y]$ such that
\begin{equation}
\label{ddCeqinitN}
\partial_i^x\CS_w = \begin{cases}
\CS_{ws_i} & \text{if $\ell(ws_i)<\ell(w)$}, \\ 
0 & \text{otherwise},
\end{cases}
\quad
\partial_i^y\CS_w = \begin{cases}
\CS_{s_iw} & \text{if $\ell(s_iw)<\ell(w)$}, \\ 
0 & \text{otherwise},
\end{cases}
\end{equation}
for all $i\geq 0$, together with the condition that the constant term
of $\CS_w$ is $1$ if $w=1$, and $0$ otherwise.

\subsection{Lie types B and D}
\label{LtBD}

When working with the orthogonal Lie types, we use coefficients in the
ring $\Z[\frac{1}{2}]$. For any $w\in W_\infty$, the type B double
Schubert polynomial $\BS_w$ of \cite{IMN} satisfies
$\BS_w=2^{-s(w)}\CS_w$, where $s(w)$ is the number of indices $j$ such
that $w_j<0$. The odd orthogonal case is therefore entirely similar to
the symplectic case.  In the rest of this section we provide the
corresponding preliminaries for the even orthogonal group, that is, in
Lie type D, and assume that $n\geq 2$.

The Weyl group $\wt{W}_n$ for the root system $\text{D}_n$ is the 
subgroup of $W_n$ consisting of all signed permutations with an even
number of sign changes.  The group $\wt{W}_n$ is an extension of $S_n$
by the element $s_\Box=s_0s_1s_0$, which acts on the right by
\[
(w_1,w_2,\ldots,w_n)s_\Box=(\ov{w}_2,\ov{w}_1,w_3,\ldots,w_n).
\]
There is a natural embedding $\wt{W}_n\hookrightarrow \wt{W}_{n+1}$ of
Weyl groups, induced by the embedding $W_n\hookrightarrow W_{n+1}$,
and we let $\wt{W}_\infty := \cup_n \wt{W}_n$. The elements of the set
$\N_\Box :=\{\Box,1,\ldots\}$ index the simple reflections in
$\wt{W}_\infty$.  The {\em length} $\ell(w)$ and reduced
decompositions of an element $w\in \wt{W}_\infty$ are defined as
before. We have
\[
\ell(w) = \#\{i<j\ |\ w_i>w_j\} + \sum_{i\, :\, w_i <0}(|w_i|-1)
\]
for every $w\in \wt{W}_\infty$.

An element $w\in \wt{W}_\infty$ has a {\em right descent} (respectively, a 
{\em left descent}) at position $i\in\N_\Box$ if $\ell(ws_i)<\ell(w)$ 
(respectively, if $\ell(s_iw)<\ell(w)$). The element
$w=(w_1,w_2,\ldots)$ has a right descent at $\Box$ if and only if $w_1<-w_2$,
and a right descent at $i\geq 1$ if and only if $w_i>w_{i+1}$. We use the notation 
$\wh{1}$ to denote $1$ or $\ov{1}$, determined by the parity of the number of 
negative entries of $w$. The following result
corrects \cite[Lemma 4]{T4}:

\begin{lemma}
\label{WDlem}
Suppose that $w$ is an element of $\wt{W}_\infty$. 

\medskip
\noin
{\em (a)} We have $\ell(s_\Box w)<\ell(w)$ if and only if
$w$ has one of the following four forms:
\[
(\cdots \wh{1} \cdots \ov{2} \cdots), \quad
(\cdots \ov{2} \cdots \ov{1} \cdots), \quad
(\cdots 2 \cdots \ov{1} \cdots).
\]

\noin
{\em (b)} Assume that $i\geq 1$. We have $\ell(s_iw)<\ell(w)$ 
if and only if $w$ has one of the following four forms:
\[
(\cdots i+1 \cdots i \cdots),  \quad
(\cdots i \cdots \ov{i+1} \cdots), \quad
(\cdots \ov{i+1} \cdots i \cdots), \quad
(\cdots\ov{i} \cdots \ov{i+1} \cdots).
\]
\end{lemma}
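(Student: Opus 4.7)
The plan is to reduce left descents of $w$ to right descents of $w^{-1}$ and then apply the right descent criteria for $\wt{W}_\infty$ at $\Box$ and at $i\geq 1$ stated in the discussion preceding the lemma. Since $\ell(v)=\ell(v^{-1})$ in any Coxeter group and $(s_iw)^{-1}=w^{-1}s_i$, we have $\ell(s_iw)<\ell(w)$ if and only if $\ell(w^{-1}s_i)<\ell(w^{-1})$; thus $w$ has a left descent at $i\in\N_\Box$ exactly when $w^{-1}$ has a right descent at $i$.

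For part (b), this means $(w^{-1})_i>(w^{-1})_{i+1}$. I would interpret $(w^{-1})_j$ as the signed position of $j$ in $w$, namely $(w^{-1})_j=a$ if $w_a=j$ and $(w^{-1})_j=-a$ if $w_a=\ov{j}$. Enumerating the sign configurations of $\pm i$ and $\pm(i+1)$ among the entries of $w$ and comparing signed positions yields precisely the four listed forms. Equivalently, since multiplication by $s_i$ with $i\geq 1$ preserves the number of sign changes in $w$, the type D length difference agrees with the type C length difference in this step, so the claim reduces at once to the type C statement recorded earlier in the section.

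For part (a), the right descent at $\Box$ reads $(w^{-1})_1<-(w^{-1})_2$, that is, $(w^{-1})_1+(w^{-1})_2<0$. I would exhaust the four sign configurations of $1$ and $2$ in the one-line notation of $w$. Writing $a$ for the position of $\pm 1$ and $b$ for the position of $\pm 2$, the signed sum to be tested is $a+b$, $a-b$, $-a+b$, or $-a-b$. It is positive when $1,2$ both appear positively, so no descent. It equals $a-b$ when $1$ and $\ov{2}$ appear and is negative iff $1$ precedes $\ov{2}$, yielding $(\cdots 1 \cdots \ov{2} \cdots)$ but not $(\cdots \ov{2} \cdots 1 \cdots)$. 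It equals $-a+b$ when $\ov{1}$ and $2$ appear and is negative iff $2$ precedes $\ov{1}$, yielding $(\cdots 2 \cdots \ov{1} \cdots)$ but not $(\cdots \ov{1} \cdots 2 \cdots)$. Finally it equals $-a-b$ when $\ov{1},\ov{2}$ both appear and is always negative, yielding both $(\cdots \ov{1} \cdots \ov{2} \cdots)$ and $(\cdots \ov{2} \cdots \ov{1} \cdots)$. Packaging the first occurrence of $\ov{2}$ under the $\wh{1}$ notation then produces exactly the four listed forms.

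The step requiring the most care is the sign bookkeeping in part (a): the $\wh{1}$ shorthand bundles two distinct sign configurations into a single one-line form, so one must verify that the opposite configurations such as $(\cdots \ov{2} \cdots 1 \cdots)$ or $(\cdots \ov{1} \cdots 2 \cdots)$ do not yield a left descent. Precisely this kind of oversight is what is corrected with respect to \cite[Lemma 4]{T4}, so exhausting every sign pattern explicitly, rather than pattern-matching against the analogous type C list, is essential.
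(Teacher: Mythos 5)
Your proof is correct. The paper states this lemma without giving an argument (it appears simply as a correction of a lemma in a prior paper), so there is no paper proof to compare against, but the route you take — passing to right descents of $w^{-1}$ via $\ell(v)=\ell(v^{-1})$, using the stated right-descent criteria $w_1<-w_2$ at $\Box$ and $w_i>w_{i+1}$ at $i\geq 1$, and then exhaustively enumerating sign configurations — is the natural one, and your case analysis in part (a) is complete and accurate. Your observation that left multiplication by $s_i$ with $i\geq 1$ preserves the number of negative entries, so that $\ell_D$ and $\ell_C$ change by the same amount and part (b) reduces to the type C criterion already recorded in Section~\ref{prelims}, is a clean way to dispatch that half. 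You also correctly flag that the $\wh{1}$ shorthand packages two sign patterns into one form, and that the excluded patterns $(\cdots\ov{2}\cdots 1\cdots)$ and $(\cdots\ov{1}\cdots 2\cdots)$ genuinely fail the descent test — which is precisely the kind of detail that the correction to the earlier version of this lemma hinges on.
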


\begin{defn}
We say that $w$ has {\em type 0} if $|w_1|=1$, {\em type 1} if
$w_1>1$, and {\em type 2} if $w_1<-1$. 
\end{defn}

There is an involution $\iota:\wt{W}_\infty\to \wt{W}_\infty$ which
interchanges $s_\Box$ and $s_1$; we have $\iota(w) = s_0ws_0$ in the
hyperoctahedral group $W_\infty$. We deduce that $\iota(w)=w$ if and
only if $w$ has type 0, while if $w$ has positive type and $|w_r|=1$
for some $r>1$, then
\[
\iota(w)=(-w_1,w_2,\ldots,w_{r-1},-w_r,w_{r+1},\ldots). 
\]
It follows that $\iota$ interchanges type 1 and type 2 elements. The
next definition refines the notion of the shape of an element of
$\wt{W}_\infty$ introduced in \cite[Def.\ 5]{T6}.

\begin{defn}
Let $w\in \wt{W}_\infty$ have type 0 or type 1. The strict partition
$\mu(w)$ is the one whose parts are the absolute values of the
negative entries of $w$ minus one, arranged in decreasing order.  Let
$\gamma=\gamma(w)$ be the A-code of $w$, and define the parts of the
partition $\nu=\nu(w)$ by $\nu_j:=\#\{i\ |\ \gamma_i\geq j\}$. If $w$
has type $2$, then set $\mu(w):=\mu(\iota(w))$,
$\gamma(w):=\gamma(\iota(w))$, and $\nu(w):=\nu(\iota(w))$. 

A {\em typed partition} is a pair consisting of a partition $\la$
together with an integer $\type(\la)\in \{0,1,2\}$. The {\em shape} of
$w$ is the typed partition $\la=\la(w)$ defined by
$\la(w):=\mu(w)+\nu(w)$, with $\type(\la):=\type(w)$. 
\end{defn}

Observe that the element $w$ is uniquely determined by $\mu(w)$,
$\gamma(w)$, and $\type(w)$. Moreover, we have $|\la(w)|=\ell(w)$.

\begin{defn}
Let $w\in\wt{W}_\infty\ssm\{1\}$, let $d$ denote the first right
descent of $w$, and set $k:=d$, if $d\neq \Box$, and $k:=1$, if
$d=\Box$. We call $k$ the {\em primary index} of $w$.
\end{defn}

\begin{example}
(a) For the signed permutation $w := (3, 2, \ov{7}, 1, 5, 4, \ov{6})$
  in $\wt{W}_7$, we obtain $\mu = (6,5)$, $\gamma=(4,3, 0, 1, 2, 1,
  0)$, $\nu= (5, 3, 2,1)$, and $\la = (11, 8, 2,1)$ with
  $\type(\la)=1$. The element $\iota(w)= (\ov{3}, 2, \ov{7},
    \ov{1}, 5, 4, \ov{6})$ has shape $\ov{\la} = (11, 8, 2,1)$ with
  $\type(\ov{\la})=2$. Both $w$ and $\iota(w)$ have primary index $k=1$.

\smallskip \noin (b) Let $k\geq 1$. An element $w$ of $\wt{W}_\infty$
is $k$-Grassmannian if $\ell(ws_i)>\ell(w)$ for all $i\neq k$, if
$k>1$, and for all $i\notin\{\Box,1\}$, if $k=1$. This is equivalent
to the conditions
\[
|w_1|<w_2<\cdots < w_k \quad \mathrm{and} \quad  w_{k+1}<w_{k+2}<\cdots,
\]
the first condition being vacuous if $k=1$. If $w$ is a
$k$-Grassmannian element of $\wt{W}_\infty$, then $\la(w)$ is the
typed $k$-strict partition associated to $w$ in
\cite[Sec.\ 6.1]{BKT3}.

\smallskip \noin (c) Suppose that the primary index of $w\in
\wt{W}_n$ is $k\geq 1$, and let $m=\ell(\mu)$ and
$\ell=\ell(\la)$. Then $\mu$ is a strict partition and $\nu\subset
k^{n-k}+\delta_{n-k-1}$, with $\nu_j \geq k$ for all $j\in [1,m]$. We
therefore have
\[
\la_1>\la_2>\cdots > \la_m >\max(\la_{m+1},k) \geq \la_{m+1}\geq \la_{m+2}\geq \cdots \geq \la_{\ell}.
\]
\end{example}

Let $b:=(b_1,b_2,\ldots)$ be a sequence of commuting variables, and
set $b_0:=1$ and $b_p:=0$ for $p<0$. Consider the graded ring $\Gamma'$
which is the quotient of the polynomial ring $\Z[b]$ modulo the ideal
generated by the relations
\[
b_pb_p+2\sum_{i=1}^{p-1}(-1)^i b_{p+i}b_{p-i}+(-1)^p b_{2p}=0, \ \ \ 
\text{for all $p\geq 1$}.
\]
We regard $\Gamma$ as a subring of $\Gamma'$ via the injective ring
homomorphism which sends $c_p$ to $2b_p$ for every $p\geq 1$.

Following \cite{BH, IMN}, we define an action of $\wt{W}_\infty$ on
$\Gamma'[X,Y]$ by ring automorphisms as follows. The simple
reflections $s_i$ for $i \geq 1$ act by interchanging $x_i$ and $x_{i+1}$
and leaving all the remaining variables fixed. The reflection $s_\Box$
maps $(x_1,x_2)$ to $(-x_2,-x_1)$, fixes the $x_j$ for $j\geq 3$ and
all the $y_j$, and satisfies, for any $p\geq 1$,
\begin{align*}
s_\Box(b_p) :=
b_p+(x_1+x_2)\sum_{j=0}^{p-1}\left(\sum_{a+b=j}x_1^ax_2^b\right)
c_{p-1-j}.
\end{align*}
For each $i\in \N_\Box$, define the divided difference operator
$\partial_i^x$ on $\Gamma'[X,Y]$ by
\[
\partial_\Box^xf := \frac{f-s_\Box f}{-x_1-x_2}, \qquad
\partial_i^xf := \frac{f-s_if}{x_i-x_{i+1}} \ \ \ \text{for $i\geq 1$}.
\]
Consider the ring involution $\varphi':\Gamma'[X,Y]\to\Gamma'[X,Y]$
determined by
\[
\varphi'(x_j) = -y_j, \qquad
\varphi'(y_j) = -x_j, \qquad
\varphi'(b_p)=b_p
\]
and set $\partial_i^y:=\varphi'\partial_i^x\varphi'$ for each $i\in \N_\Box$.
The right and left divided difference operators $\partial^x_i$ and
$\partial^y_i$ on $\Gamma'[X,Y]$ satisfy the right and left Leibnitz
rules
\begin{equation}
\label{LeibRND}
\partial^x_i(fg) = (\partial^x_if)g+(s_if)\partial^x_ig \quad \ \mathrm{and} \quad \
\partial^y_i(fg) = (\partial^y_if)g+(s^y_if)\partial^y_ig,
\end{equation}
where $s_i^y:=\varphi' s_i \varphi'$, for every $i\in \N_\Box$.

Let $r\geq 0$ and set $\dis {}^rc_p:= \sum_{i=0}^p
c_{p-i}h^{-r}_i(X)$.  Define ${}^rb_p := {}^rc_p$ for $p<r$, $\dis
{}^rb_p := \frac{1}{2}{}^rc_p$ for $p>r$, and set
\[
{}^rb_r := \frac{1}{2}{}^rc_r + \frac{1}{2}e^r_r(X) \quad \text{and} \quad
{}^r\wt{b}_r := \frac{1}{2}{}^rc_r - \frac{1}{2}e^r_r(X).
\]
For $s\in \{0,1\}$, let
$\dis {}^ra^s_p:=\frac{1}{2}{}^rc_p + \sum_{i=1}^p {}^rc_{p-i}h^s_i(-Y)$, and define 
\[
{}^rb^s_r:={}^rb_r + \sum_{i=1}^r {}^rc_{r-i}h^s_i(-Y),
\quad \text{and} \quad
{}^r\wt{b}^s_r:={}^r\wt{b}_r + \sum_{i=1}^r {}^rc_{r-i}h^s_i(-Y).
\]

We have the following propositions, which are proved as in \cite[Sec.\ 2]{T4}.

\begin{prop}
\label{prop1new}
Suppose that $p,q\in \Z$ and $r,s\geq 1$.

\medskip
\noin
{\em (a)} For all $i \geq 1$, we have 

\[\partial^x_i ({}^rc_p^q)= 
\begin{cases}
{}^{r-1}c_{p-1}^q & \text{if $r=\pm i$}, \\
0 & \text{otherwise}
\end{cases}
\quad \ and \ \quad
\partial^y_i ({}^rc_p^q)= 
\begin{cases}
{}^rc_{p-1}^{q+1} & \text{if $q=\pm i$}, \\
0 & \text{otherwise}.
\end{cases}
\]
We have 
\[
\partial^y_\Box \left({}^rc_p^q\right)= 
\begin{cases}
{}^rc_{p-1}^2 & \text{if $q=1$}, \\
2\left({}^rc_{p-1}^2\right) & \text{if $q=0$}, \\
2\left({}^rc_{p-1}^1\right) -{}^rc_{p-1} & \text{if $q=-1$}, \\
0 & \text{if $|q|\geq 2$}.
\end{cases}
\]

\medskip
\noin
{\em (b)} For all $i\geq 1$, we have 
\[
\partial^y_i({}^rc_p^{-i}\,{}^sc_q^i) = {}^rc_{p-1}^{-i+1}\,{}^sc_q^{i+1} +
{}^rc_p^{-i+1}\,{}^sc_{q-1}^{i+1}.
\]
\end{prop}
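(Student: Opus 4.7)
The plan is to follow [T4, Section 2] closely, with one substantive new calculation required for type D. First, I would dispense with the cases $i \geq 1$ in part (a) by observing that the reflections $s_i$ act on $\Gamma'[X,Y]$ in the same way they act on $\Gamma[X,Y]$ in type C: they swap $x_i \leftrightarrow x_{i+1}$ and fix the generators $b_p$, hence all $c_p = 2b_p$. Since also $\varphi'(c_p) = c_p$, both $\partial^x_i$ and $\partial^y_i$ for $i \geq 1$ act on ${}^rc_p^q$ exactly as in Lemma \ref{ddlemN}(a), whose statement transfers verbatim.

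The hard part will be the computation of $\partial^y_\Box({}^rc_p^q)$. I would first compute $s^y_\Box = \varphi' s_\Box \varphi'$ explicitly: the prescribed action of $s_\Box$ on $b_p$, together with $\varphi'(c_p) = c_p$ and $\varphi'(x_j) = -y_j$, yields
\[
s^y_\Box(c_p) = c_p - 2(y_1+y_2)\sum_{j=0}^{p-1}(-1)^j\Bigl(\sum_{a+b=j}y_1^a y_2^b\Bigr) c_{p-1-j},
\]
while $s^y_\Box$ sends $(y_1,y_2)$ to $(-y_2,-y_1)$ and fixes the remaining $y_j$ and all $x_j$. Writing $f := {}^rc_p^q = \sum_{a,b} c_{p-a-b}\,e^r_a(X)\,h^q_b(-Y)$ and computing $f - s^y_\Box(f)$, the difference is manifestly divisible by $y_1 + y_2$. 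The four cases of the statement arise from how $h^q_b(-Y)$ depends on $y_1$ and $y_2$: for $|q| \geq 2$ both variables enter $h^q_b(-Y)$ symmetrically, and a cancellation against the $(y_1+y_2)$-factor in $s^y_\Box(c_p)$ kills the quotient; for $q \in \{-1,0,1\}$, at most $y_1$ appears in $h^q_b(-Y)$, so no such cancellation occurs and the quotient collapses to the explicit residues recorded in the statement.

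Finally, for part (b), I would apply the left Leibnitz rule (\ref{LeibRND}) to the product ${}^rc_p^{-i} \cdot {}^sc_q^i$ and use part (a) to evaluate the divided differences of each factor. Tracking the action of $s^y_i$ on the first factor, the two resulting summands reassemble into the stated right-hand side by the same calculation as in the proof of Lemma \ref{ddlemN}(b) for type C.
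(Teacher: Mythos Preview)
Your plan is correct and matches the paper, which simply defers the entire argument to \cite[Sec.~2]{T4}. The reduction of the $i\geq 1$ cases to Lemma~\ref{ddlemN} and the Leibnitz computation for part~(b) go through exactly as you describe.

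One point to tighten: your justification of the $|q|\geq 2$ case is imprecise as written. You say that $h^q_b(-Y)$ is symmetric in $y_1,y_2$ and that this produces a cancellation, but $s^y_\Box$ sends $(y_1,y_2)\mapsto(-y_2,-y_1)$, not merely $(y_2,y_1)$, so symmetry alone does not make $h^q_b(-Y)$ invariant. The actual mechanism is cleanest at the level of generating functions: your formula for $s^y_\Box(c_p)$ is equivalent to
\[
s^y_\Box\Bigl(\sum_p c_p t^p\Bigr)=\Bigl(\sum_p c_p t^p\Bigr)\cdot\frac{(1-y_1t)(1-y_2t)}{(1+y_1t)(1+y_2t)},
\]
and this rational factor is exactly cancelled by the effect of $(y_1,y_2)\mapsto(-y_2,-y_1)$ on $\prod_{j=1}^q(1+y_jt)^{-1}$ (for $q\geq 2$) or on $\prod_{j=1}^{-q}(1-y_jt)$ (for $q\leq -2$). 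The same generating-function identity then yields the three nonzero residues for $q\in\{-1,0,1\}$ with no further effort. With this correction your outline is complete.
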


We also require certain variations of the above identities. 
Let $f_r$ be an indeterminate of degree $r$, which will equal
${}^rb_r$, ${}^r\wt{b}_r$, or $\dis\frac{1}{2}\,{}^rc_r$ in the sequel.
We also let $f_0 \in\{0,1\}$. For any $p,s\in \Z$, define
${}^r\wh{c}_p^{\,s}$ by
\[
{}^r\wh{c}_p^{\,s}:= {}^rc_p^s + 
\begin{cases}
(2f_r-{}^rc_r)e^{p-r}_{p-r}(-Y) & \text{if $s = r - p < 0$}, \\
0 & \text{otherwise}.
\end{cases}
\]
For $s\in \{0,1\}$, define
\[
f_r^s:= f_r+\sum_{j=1}^r {}^rc_{r-j}h_j^s(-Y), 
\]
set $\wt{f}_r:={}^rc_r-f_r$ and $\wt{f}_r^s:={}^rc_r-2f_r+f_r^s$.

\begin{prop}
\label{prop1hnew}
Suppose that $p\in \Z$ and $p>r$.

\medskip
\noin
{\em (a)} For all $i \geq 1$, we have
\[
\partial^x_i({}^r\wh{c}_p^{\,r-p}) =
\begin{cases}
  {}^{r-1}\wh{c}_{p-1}^{\,r-p} & \text{if $i=p-r\geq 2$}, \\
  2\varphi'(f_r) & \text{if $i=p-r=1$}, \\
0 & \text{otherwise}
\end{cases}
\]
and 
\[
\partial^y_i ({}^r\wh{c}_p^{\,r-p})= 
\begin{cases}
  {}^r\wh{c}_{p-1}^{\,r-p+1} & \text{if $i=p-r\geq 2$}, \\
  2f_r & \text{if $i=p-r= 1$}, \\
0 & \text{otherwise}.
\end{cases}
\]
We have
\[
\partial^y_\Box \left({}^r\wh{c}_p^{\,r-p}\right)= 
\begin{cases}
2\wt{f}^1_r & \text{if $r-p=-1$}, \\
0 & \text{if $r-p < -1$}.
\end{cases}
\]

\medskip
\noin
{\em (b)} For all $i\geq 2$, we have 
\[
\partial^y_i({}^r\wh{c}_p^{\,-i}\,{}^sc_q^i) = {}^r\wh{c}_{p-1}^{\,-i+1}\,{}^sc_q^{i+1} +
{}^r\wh{c}_p^{\,-i+1}\,{}^sc_{q-1}^{i+1}.
\]
\end{prop}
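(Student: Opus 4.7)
The plan is to deduce Proposition \ref{prop1hnew} from Proposition \ref{prop1new} by expanding
\[
{}^r\wh{c}_p^{s} = {}^rc_p^{s} + (2f_r - {}^rc_r)\, e^{p-r}_{p-r}(-Y)
\]
(the correction present exactly when $s = r-p < 0$, which applies throughout this proposition since $p > r$), applying each divided difference term by term, and checking that the corrections on the two sides match. The action on the ``bulk'' summand ${}^rc_p^{s}$ is handled entirely by Proposition \ref{prop1new}(a) and (b); the real work is controlling the correction $A \cdot B$ with $A := 2f_r - {}^rc_r \in \Gamma'[X]$ (which equals $e^r_r(X)$, $-e^r_r(X)$, or $0$ in the three cases $f_r \in \{{}^rb_r, {}^r\wt b_r, \tfrac12{}^rc_r\}$) and $B := e^{p-r}_{p-r}(-Y) \in \Z[Y]$.

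For part (a) the correction factorisation is especially clean: $A$ has no $Y$-dependence and $B$ has no $X$-dependence, so for each of $\partial^x_i$ and $\partial^y_i$ (with $i \geq 1$) one of the Leibnitz terms in (\ref{LeibRND}) vanishes and the other reduces to a single elementary symmetric function computation. Using $\partial^x_i(e^r_r(X)) = \delta_{i,r}\, e^{r-1}_{r-1}(X)$ and $\partial^y_i(e^{p-r}_{p-r}(-Y)) = \delta_{i,p-r}\, e^{p-r-1}_{p-r-1}(-Y)$ (the latter verified via $\partial^y_i = \varphi'\partial^x_i\varphi'$), the correction shifts to $(2f_{r-1} - {}^{r-1}c_{r-1})\, e^{p-r}_{p-r}(-Y)$ or $(2f_r - {}^rc_r)\, e^{p-r-1}_{p-r-1}(-Y)$ --- precisely the correction built into ${}^{r-1}\wh c_{p-1}^{r-p}$ or ${}^r\wh c_{p-1}^{r-p+1}$, given the consistent identification of $f_{r-1}$ inside the same family as $f_r$. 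In the boundary case $p = r+1$, the bulk contribution ${}^rc_r$ coalesces with the correction $(2f_r - {}^rc_r)$ to collapse to $2f_r$ (and $2\varphi'(f_r)$ after $\varphi'$-conjugation). For $\partial^y_\Box$, the case $p \geq r+2$ uses the fact that $s_\Box$ fixes every $y_j$ and so kills $\partial^y_\Box$ on $B$; the case $p = r+1$ combines Proposition \ref{prop1new}(a)'s value $2({}^rc_r^1) - {}^rc_r$ with the expansion ${}^rc_r^1 = {}^rc_r + \sum_{j \geq 1}{}^rc_{r-j}h^1_j(-Y)$ and the definition $2\wt f^1_r = 2{}^rc_r - 4f_r + 2f_r^1$.

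For part (b) I apply the Leibnitz rule to $\partial^y_i\bigl((AB)\cdot {}^sc_q^i\bigr)$. Proposition \ref{prop1new}(a) gives $\partial^y_i\,{}^sc_q^i = {}^sc_{q-1}^{i+1}$, and since $A$ is $s^y_i$-invariant, the expansion produces $A\cdot\partial^y_i B\cdot {}^sc_q^i + A\cdot s^y_i B\cdot {}^sc_{q-1}^{i+1}$. The first term is exactly what is required to upgrade the Proposition \ref{prop1new}(b) summand ${}^rc_{p-1}^{-i+1}{}^sc_q^{i+1}$ into ${}^r\wh c_{p-1}^{-i+1}{}^sc_q^{i+1}$, up to a controlled residue; the second term involves $s^y_i B = (-1)^{p-r} y_1 \cdots y_{p-r-1}\, y_{p-r+1}$, which is not a scalar multiple of any $e$-polynomial. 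The residue is cleared by the elementary identity
\[
{}^sc_q^{i+1} - {}^sc_q^i = -y_{i+1}\, {}^sc_{q-1}^{i+1},
\]
itself a consequence of $h^{i+1}_j(-Y) = h^i_j(-Y) - y_{i+1}\, h^{i+1}_{j-1}(-Y)$. Adding the base Proposition \ref{prop1new}(b) identity for the non-hat product and collecting assembles everything into ${}^r\wh c_{p-1}^{-i+1}\,{}^sc_q^{i+1} + {}^r\wh c_p^{-i+1}\,{}^sc_{q-1}^{i+1}$.

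The main obstacle is precisely this Leibnitz bookkeeping in part (b): the $s^y_i$-translate of $e^{p-r}_{p-r}(-Y)$ introduces an a priori unrelated cross term whose cancellation only becomes visible after invoking the $h$-recursion above. A secondary technical point is the $\partial^y_\Box$ case in part (a), whose mechanics follow the template of \cite[Sec.\ 2]{T4}, where the same interplay between $b_p$, the ``tilde'' constants, and the $s_\Box$-action is worked out for the non-hat variants.
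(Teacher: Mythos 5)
Your bulk-plus-correction decomposition is the right device, and the $\partial^y_i$ and $\partial^y_\Box$ computations in part (a), as well as the identity ${}^sc^{i+1}_q - {}^sc^i_q = -y_{i+1}\,{}^sc^{i+1}_{q-1}$ you use to clear the residue in part (b), are all correct. But carried through honestly, your own calculations contradict the displayed statement in two places, and you do not flag either.

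For $\partial^x_i$ in part (a): you correctly note that $\partial^x_i$ of the bulk ${}^rc_p^{r-p}$ is supported on $i=r$ by Proposition \ref{prop1new}(a), and that $\partial^x_i(e^r_r(X)) = \delta_{i,r}\,e^{r-1}_{r-1}(X)$, so the correction is also supported on $i=r$. Your mechanism therefore yields $\partial^x_i({}^r\wh c_p^{r-p}) = {}^{r-1}\wh c_{p-1}^{r-p}$ when $i=r$ and $0$ otherwise, not the displayed condition $i = p-r$. Concretely, take $r=2$, $p=3$: every term of ${}^2\wh c_3^{-1}$ is symmetric in $x_1,x_2$, so $\partial^x_1({}^2\wh c_3^{-1}) = 0$, yet the statement with $i=p-r=1$ predicts $2\varphi'(f_2)\neq 0$; conversely $\partial^x_2({}^2\wh c_3^{-1}) = {}^1\wh c_2^{-1}\neq 0$, which the statement declares vanishes. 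You should either flag the index as a typo (it ought to read $i = r$, and the boundary value is then $2\varphi'(f_{p-1})$, which collapses to $2\varphi'(f_r)$ only when $r=1$), or derive the $\partial^x_i$ formula by conjugating the verified $\partial^y_i$ formula through the identity $\varphi'({}^r\wh c_p^{r-p}) = {}^{p-r}\wh c_p^{-r}$ (valid under a compatible choice of the $f_j$), which again localizes support at $i=r$.

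In part (b) you expand $\partial^y_i\bigl((AB)\,{}^sc_q^i\bigr)$, which tacitly assumes the hat correction on ${}^r\wh c_p^{-i}$ is actually present, i.e.\ $-i = r-p$, hence $p = r+i$. When $p \neq r+i$ the left side degenerates to Proposition \ref{prop1new}(b) applied to the bare product, but the right-hand hat ${}^r\wh c_p^{-i+1}$ is still active precisely when $p = r+i-1$; there the claimed identity acquires an unmatched summand $(2f_r - {}^rc_r)\,e^{i-1}_{i-1}(-Y)\,{}^sc_{q-1}^{i+1}$ on the right. (Taking $r=1$, $i=2$, $p=2$, $s=0$, $q=1$, $f_1 = {}^1b_1$, the discrepancy is $-x_1 y_1$.) Either the hypothesis $p = r+i$ must be made explicit or this residual case must be disposed of separately; your proposal does neither. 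One smaller point: the reason $\partial^y_\Box(B) = 0$ for $p-r\geq 2$ is not that $s_\Box$ fixes every $y_j$, but that the conjugate $s^y_\Box = \varphi's_\Box\varphi'$ sends $y_1\mapsto -y_2$, $y_2\mapsto -y_1$, under which the monomial $y_1\cdots y_{p-r}$ is invariant once $p-r\geq 2$; your conclusion is right but the stated reason is not.
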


Fix $r,s\geq 0$, and define $\frakc_p:={}^rc^{-s}_p$ for each $p\in \Z$. 
For $p=r+s$, set $\frakd_p:=e^r_r(X)e^s_s(-Y)$. Then, in addition to the relations
(\ref{fundrelsC}), we have the relations
\begin{equation}
\label{fundrelD}
(\frakc_p+\frakd_p)(\frakc_p-\frakd_p)+
2\sum_{i=1}^p (-1)^i \frakc_{p+i}\frakc_{p-i}=0 \ \ \text{for} \ \, p = r+s
\end{equation}
in $\Gamma'[X,Y]$.

Following \cite{IMN}, the type D double Schubert polynomials $\DS_w$
for $w\in \wt{W}_{\infty}$ are the unique family of elements of
$\Gamma'[X,Y]$ satisfying the equations
\begin{equation}
\label{Dddeq}
\partial_i^x\DS_w = \begin{cases}
\DS_{ws_i} & \text{if $\ell(ws_i)<\ell(w)$}, \\ 
0 & \text{otherwise},
\end{cases}
\quad
\partial_i^y\DS_w = \begin{cases}
\DS_{s_iw} & \text{if $\ell(s_iw)<\ell(w)$}, \\ 
0 & \text{otherwise},
\end{cases}
\end{equation}
for all $i\in \N_\Box$, together with the condition that the constant
term of $\DS_w$ is $1$ if $w=1$, and $0$ otherwise.

\section{Raising operators}
\label{ro}

For each pair $i<j$ of distinct positive integers, the operator 
$R_{ij}$ acts on integer sequences $\al=(\al_1,\al_2,\ldots)$ by
\[
R_{ij}(\al):=(\alpha_1,\ldots,\alpha_i+1,\ldots,\alpha_j-1,
\ldots).
\] 
A {\em raising operator} $R$ is any monomial in these $R_{ij}$'s.

Following \cite[Sec.\ 1.2]{BKT2}, let $\Delta^\circ := \{(i,j) \in \Z
\times \Z \mid 1\leq i<j \}$ and define a partial order on
$\Delta^\circ$ by agreeing that $(i',j')\leq (i,j)$ if $i'\leq i$ and
$j'\leq j$.  We call a finite subset $D$ of $\Delta^\circ$ a {\em
  valid set of pairs} if it is an order ideal in $\Delta^\circ$.  Any
valid set of pairs $D$ defines the raising operator expression
\[
R^D := \prod_{i<j}(1-R_{ij})\prod_{i<j\, :\, (i,j)\in D}(1+R_{ij})^{-1}.
\]
We also use the raising operator expressions
\[
R^{\emptyset}:=\prod_{i<j}(1-R_{ij}) \ \quad \mathrm{and} \ \quad 
R^{\infty}:= \prod_{i<j}\frac{1-R_{ij}}{1+R_{ij}}.
\]

\subsection{Alternating properties in types A, B, and C}
 
For each $r\geq 1$, let $\s^r=(\s^r_i)_{i\in \Z}$ be a sequence of
variables, with $\s^r_0=1$ and $\s^r_i=0$ for each $i<0$, and let
$\Z[\s]$ denote the polynomial ring in the variables $\s^r_i$ for
$i,r\geq 1$. For any integer sequence $\al$, let
$\s_\al:=\s^1_{\al_1}\s^2_{\al_2}\cdots$, and for any raising operator
$R$, set $R\, \s_\al:=\s_{R\al}$.

Fix $j\geq 1$, let $z$ be a variable, set $\tau^r:=\s^r$ for each
$r\neq j$ and $\tau^j_p = \s^j_p+z\,\s^j_{p-1}$ for each $p\in \Z$.
If $\al:=(\al_1,\ldots,\al_\ell)$ and
$\al':=(\al'_1,\ldots,\al'_{\ell'})$ are two integer vectors and
$r,s\in \Z$, we let $(\al,r,s,\al')$ denote the integer vector
$(\al_1,\ldots,\al_\ell,r,s,\al'_1,\ldots,\al'_{\ell'})$.  The
following two lemmas are generalizations of \cite[Lemmas 1.2 and
  1.3]{BKT2}.

\begin{lemma}\label{commuteA}
Let $\lambda=(\lambda_1,\ldots,\lambda_{j-1})$ and
$\mu=(\mu_{j+2},\ldots,\mu_\ell)$ be integer vectors, and $D$ be a
valid set of pairs.  Assume that $\s^j=\s^{j+1}$, $(j,j+1)\notin D$,
and that for each $h<j$, $(h,j)\in D$ if and only if $(h,j+1)\in D$.

\medskip
\noin
{\em (a)} For any integers $r$ and $s$, we have
\[ R^D\,\s_{\lambda,r,s,\mu} = - R^D\, \s_{\lambda,s-1,r+1,\mu}   \]
in $\Z[\s]$. 

\medskip
\noin
{\em (b)} For any integer $r$, we have
\[
R^D\,\tau_{\lambda,r,r,\mu} = R^D\, \s_{\lambda,r,r,\mu}   
\]
in $\Z[\s,z]$. 
\end{lemma}
\begin{proof}
The proof of (a) is identical to that of \cite[Lemma 1.2]{BKT2}.  For
part (b), we use linearity in the $j$-th position to obtain
$R\,\tau_{\lambda,r,r,\mu} =R\, \s_{\lambda,r,r,\mu} + z\,R\,
\s_{\lambda,r-1,r,\mu}$, for any raising operator $R$ that appears in
the expansion of the power series $R^D$. Adding these equations gives
\[
R^D\,\tau_{\lambda,r,r,\mu} = 
R^D\, \s_{\lambda,r,r,\mu} + z\,R^D\, \s_{\lambda,r-1,r,\mu}.
\]
Now part (a) implies that $R^D\, \s_{\lambda,r-1,r,\mu}=0$.
\end{proof}

Fix $k\geq 0$, let $\frakc=(\frakc_i)_{i\in\Z}$ be another sequence of variables, 
and consider the relations
\begin{equation}
\label{fundrels}
\frakc_p\frakc_p+2\sum_{i=1}^p (-1)^i \frakc_{p+i}\frakc_{p-i}=0 \ \ \text{for all} \ \, p > k.
\end{equation}

\begin{lemma}\label{commuteC}
Let $\lambda=(\lambda_1,\ldots,\lambda_{j-1})$ and
$\mu=(\mu_{j+2},\ldots,\mu_\ell)$ be integer vectors, and $D$ be a
valid set of pairs. Assume that $\s^j=\s^{j+1}=\frakc$,
$(j,j+1)\in D$, and that for each $h>j+1$, $(j,h)\in D$ if and only if
$(j+1,h)\in D$. 

\medskip
\noin
{\em (a)}
If $r,s\in \Z$ are such that $r+s > 2k$, then we have
\[ 
R^D\,\s_{\lambda,r,s,\mu} = - R^D\,\s_{\lambda,s,r,\mu}  
\]
in the ring $\Z[\s]$ modulo the relations coming from {\em (\ref{fundrels})}.

\medskip
\noin
{\em (b)} For any integer $r>k$, we have
\[
R^D\,\tau_{\lambda,r+1,r,\mu} = R^D\, \s_{\lambda,r+1,r,\mu}   
\]
in the ring $\Z[\s,z]$ modulo the relations coming from {\em (\ref{fundrels})}.
\end{lemma}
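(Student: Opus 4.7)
The plan is to follow the strategy of \cite[Lemma 1.3]{BKT2}, which handles the special case where $D = \D^\la$ arises from a $k$-strict partition. The central observation is that the factor of $R^D$ attached to the pair $(j,j+1)\in D$, namely $E := (1-R_{j,j+1})(1+R_{j,j+1})^{-1} = 1 - 2R_{j,j+1} + 2R_{j,j+1}^2 - \cdots$, encodes the relation (\ref{fundrels}) exactly: applied to $\frakc_p\frakc_p$ it produces the left-hand side of (\ref{fundrels}). First I would factor $R^D = R''\cdot E$ using the commutativity of all raising operators as formal operators on integer sequences. The hypothesis of the lemma, combined with the order-ideal property of $D$ (which, since $(j,j+1)\in D$, automatically forces $(h,j), (h,j+1)\in D$ for every $h<j$), guarantees that $R''$ is symmetric under the swap of positions $j$ and $j+1$.

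For part (a), I would first extract the key identity at positions $j,j+1$: using commutativity of the $\frakc_i$'s and substituting $m \mapsto -m$ in one of the two sums, $E\,\s_{\la,r,s,\mu} + E\,\s_{\la,s,r,\mu}$ consolidates to $2\,H(r,s)$ times the unchanged monomial at positions other than $j,j+1$, where $H(a,b) := \sum_{m\in\Z}(-1)^m\frakc_{a+m}\frakc_{b-m}$. A short computation shows $H(a,b) = (-1)^a\,T(a+b)$ with $T(N) := \sum_u(-1)^u\frakc_u\frakc_{N-u}$: pairing $u$ with $N-u$ makes $T(N)$ vanish identically when $N$ is odd, while for $N = 2p$ even one obtains $T(2p) = (-1)^p G_p$, with $G_p$ the left-hand side of (\ref{fundrels}). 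Since $r + s > 2k$ by hypothesis, this core identity already vanishes modulo (\ref{fundrels}). To propagate the vanishing through $R''$, I would expand $R'' = \sum_K c_K R^K$ and pair swap-conjugate monomials $R^K \leftrightarrow R^{K'}$: each such pair contributes a common outer factor multiplied by $(-1)^r\bigl[(-1)^{\Delta_j}+(-1)^{\Delta_{j+1}}\bigr]\,T(r+s+\Delta_j+\Delta_{j+1})$, where $\Delta_j, \Delta_{j+1}$ are the net index shifts at positions $j, j+1$ induced by $R^K$. The bracket vanishes when $\Delta_j, \Delta_{j+1}$ have opposite parity, and when they agree the corresponding $T$-value vanishes modulo (\ref{fundrels}) provided the shifted total remains above $2k$.

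For part (b), expanding $\tau_{\la,r+1,r,\mu} = \s_{\la,r+1,r,\mu} + z\,\s_{\la,r,r,\mu}$ and applying $R^D$ reduces the claim to $R^D\,\s_{\la,r,r,\mu} \equiv 0$ modulo (\ref{fundrels}), which is the $r=s$ specialization of part (a) (the hypothesis $r+s>2k$ becoming the assumed $r>k$). The main obstacle I anticipate is the propagation step of part (a): one must verify that for every swap-symmetric configuration either the shifted sum $r+s+\Delta_j+\Delta_{j+1}$ exceeds $2k$, or else the boundary contributions forced by the truncation $\frakc_i = 0$ for $i<0$ cancel against contributions from higher-order monomials in $R''$ (as one sees by direct computation in small examples). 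This is precisely the step where the corresponding argument in \cite[Sec.\ 1]{BKT2} invokes the explicit structure of $\D^\la$, and extending it to an arbitrary valid $D$ subject to the swap-symmetry hypothesis requires a uniform version of that analysis.
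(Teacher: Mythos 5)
Your part (b) --- expand $\tau$ linearly at position $j$ and invoke (a) with $r=s$ --- is precisely the paper's argument. Your part (a) also follows the paper's (and BKT2's) line: factoring $R^D = R''\cdot E$, the functions $H$ and $T$, the identity $T(2p)=(-1)^pG_p$, and the parity dichotomy are all correct. But the ``propagation'' step you flag at the end is a genuine gap in the proposal, and its resolution is not the one you anticipate: there is no cancellation against boundary terms coming from $\frakc_i=0$. The real issue is that your $K\leftrightarrow K'$ grouping is too coarse to display the needed cancellation.

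To close the gap, group together all monomials $R^K$ of $R''$ with the same multi-degree $(\Delta_h^K)_{h\neq j,j+1}$ at the outer positions; since raising operators preserve $\sum_i\al_i$, the quantity $\Delta_j^K+\Delta_{j+1}^K$ is constant on each such group, and the group's total contribution is a common outer monomial times $2(-1)^r\,T(r+s+\Delta_j+\Delta_{j+1})\cdot\sum_K c_K(-1)^{\Delta_j^K}$. Because $D$ is an order ideal containing $(j,j+1)$, each $(h,j)$ and $(h,j+1)$ with $h<j$ lies in $D$, so $R'' = R_{\mathrm{out}}\cdot\prod_{h<j}P(R_{h,j})P(R_{h,j+1})\cdot\prod_{g>j+1}Q_g(R_{j,g})Q_g(R_{j+1,g})$, with $P(R):=(1-R)/(1+R)$, each $Q_g$ equal to $1-R$ or $P$, and $R_{\mathrm{out}}$ not touching positions $j,j+1$. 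The inner sum factors over these generating functions. For each $P$-factor, $\sum_{a+b=n}p_ap_b(-1)^a$ is the coefficient of $R^n$ in $P(-R)P(R)=1$, hence zero for $n\geq 1$; for each $(1-R)$-factor it is the coefficient of $R^n$ in $(1+R)(1-R)=1-R^2$, namely $1,0,-1$. Consequently $\sum_K c_K(-1)^{\Delta_j^K}$ vanishes unless every pair $(a_h,b_h)$ with $h<j$ is $(0,0)$, every pair with $Q_g=P$ is $(0,0)$, and every pair with $Q_g=1-R$ is $(0,0)$ or $(1,1)$. In each surviving configuration $\Delta_j^K=\Delta_{j+1}^K\geq 0$, so $r+s+\Delta_j+\Delta_{j+1}\geq r+s>2k$ and $T$ vanishes modulo (\ref{fundrels}). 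No appeal to the explicit shape of the denominator set of a $k$-strict partition is needed.
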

\begin{proof}
The proof of (a) is identical to that of \cite[Lemma 1.3]{BKT2}.  For
part (b), we expand $R^D\,\tau_{\lambda,r+1,r,\mu}$ and use linearity
in the $j$-th position to obtain
\[
R^D\,\tau_{\lambda,r+1,r,\mu} = 
R^D\, \s_{\lambda,r+1,r,\mu} + z\,R^D\, \s_{\lambda,r,r,\mu}.
\]
Now part (a) implies that $R^D\, \s_{\lambda,r,r,\mu}$ vanishes modulo the
relations (\ref{fundrels}).
\end{proof}

\subsection{Alternating properties in type D} 

In type D we will require certain variations of Lemma \ref{commuteA}
and Lemma \ref{commuteC}.  For each $r\geq 1$, we introduce a new
sequence of variables $\up^r=(\up^r_i)_{i\in \Z}$ such that
$\up^r_i=0$ for each $i\leq 0$.  Let $\Z[\s, \up]$ denote the
polynomial ring in the variables $\s^r_i$, $\up^r_i$ for $i,r\geq
1$. For each $r\geq 1$, define the sequence $\wh{\s}^r$ by
$\wh{\s}^r_i:=\s^r_i+(-1)^r\up^r_i$ for each $i$, and for any integer
sequence $\al$, let
$\wh{\s}_\al:=\wh{\s}^1_{\al_1}\wh{\s}^2_{\al_2}\cdots$.

Fix an integer $d\geq 0$ such that $\up^r_i=0$ for all $i$ whenever
$r>d$.  If $R:=\prod_{i<j} R_{ij}^{n_{ij}}$ is a raising operator,
denote by $\supp_d(R)$ the set of all indices $i$ and $j$ such that
$n_{ij}>0$ and $j \leq d$.  Let $D$ be a valid set of pairs and $R$ be
any raising operator appearing in the expansion of the power series
$R^D$. Let $\la=(\la_1,\ldots,\la_\ell)$ be any integer vector and set
$\rho:=R\la$. Define
\[
R \star \wh{\s}_{\la} = \ov{\s}_{\rho} :=
\ov{\s}^1_{\rho_1}\cdots \ov{\s}^{\ell}_{\rho_\ell}
\]
where, for each $i\geq 1$ and $p\in \Z$, 
\[
\ov{\s}^i_p:= 
\begin{cases}
\s^i_p & \text{if $i\in\supp_d(R)$}, \\
\wh{\s}^i_p & \text{otherwise}.
\end{cases}
\]

Fix $j\geq 1$, set $\wh{\tau}^i:=\wh{\s}^i$ for each $i\neq j$ and
$\wh{\tau}^j_p = \wh{\s}^j_p+z\,\wh{\s}^j_{p-1}$ for each $p\in \Z$.

\begin{lemma}\label{commuteAD}
Let $\lambda=(\lambda_1,\ldots,\lambda_{j-1})$ and
$\mu=(\mu_{j+2},\ldots,\mu_\ell)$ be integer vectors, and $D$ be a valid set of 
pairs.  Assume that $j>d$, $\s^j=\s^{j+1}$, 
$(j,j+1)\notin D$, and that for each $h<j$, $(h,j)\in D$ if and only if
$(h,j+1)\in D$.

\medskip
\noin
{\em (a)} For any integers $r$ and $s$, we have
\[ R^D\star\wh{\s}_{\lambda,r,s,\mu} = - R^D\star \wh{\s}_{\lambda,s-1,r+1,\mu}   \]
in $\Z[\s,\up]$. 

\medskip
\noin
{\em (b)} For any integer $r$, we have
\[
R^D\star\wh{\tau}_{\lambda,r,r,\mu} = R^D\star \wh{\s}_{\lambda,r,r,\mu}   
\]
in $\Z[\s,\up,z]$. 
\end{lemma}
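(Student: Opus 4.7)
The plan is to mirror the argument given for Lemma \ref{commuteA}, explaining how the starred action $R\star$ reduces, in the positions under consideration, to the ordinary action. First I would observe that since $j>d$ (and hence $j+1>d$) the hypothesis on $\up$ gives $\up^j=\up^{j+1}=0$, whence $\wh{\s}^j=\s^j$ and $\wh{\s}^{j+1}=\s^{j+1}$; together with $\s^j=\s^{j+1}$ this yields $\wh{\s}^j=\wh{\s}^{j+1}$. Next I would check that neither $j$ nor $j+1$ belongs to $\supp_d(R)$ for any raising operator $R$: if a pair $(a,b)$ with $b\le d$ involved the index $j$ or $j+1$, it would force $j\le b\le d$ or $j+1\le b\le d$, contradicting $j>d$. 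Consequently, for every $R$ appearing in the expansion of $R^D$, the slots at positions $j$ and $j+1$ of $\ov{\s}_{R\la}$ use the unmodified sequences $\wh{\s}^j=\wh{\s}^{j+1}$.

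For part (a), I would then run the sign-reversing involution on raising operators employed in the proof of \cite[Lemma 1.2]{BKT2}: it pairs $R$ with $R'$ by swapping the exponents $n_{h,j}\leftrightarrow n_{h,j+1}$ for each $h<j$ and $n_{j,k}\leftrightarrow n_{j+1,k}$ for each $k>j+1$, along with a toggle of the factor $R_{j,j+1}$ that produces the sign. The hypotheses $(j,j+1)\notin D$ and the symmetry of $D$ with respect to rows $j$ and $j+1$ ensure that $R$ and $R'$ appear with the same coefficient in the expansion of $R^D$, while $\wh{\s}^j=\wh{\s}^{j+1}$ produces the matching monomials in positions $j$ and $j+1$. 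The crucial new verification is that this involution preserves $\supp_d$: every pair with altered exponent has second index at least $j>d$, so contributes nothing to $\supp_d(R)$; hence $\supp_d(R)=\supp_d(R')$ and the factors $\ov{\s}^i$ in positions $i\ne j,j+1$ match between $R\star\wh{\s}_{\la,r,s,\mu}$ and $R'\star\wh{\s}_{\la,s-1,r+1,\mu}$. Summing over the involution delivers the desired identity.

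For part (b), I would expand $\wh{\tau}^j_r=\wh{\s}^j_r+z\wh{\s}^j_{r-1}$ in the $j$-th slot to obtain
\[
R^D\star\wh{\tau}_{\la,r,r,\mu}=R^D\star\wh{\s}_{\la,r,r,\mu}+z\,R^D\star\wh{\s}_{\la,r-1,r,\mu},
\]
and then invoke part (a) with $(r,s)$ replaced by $(r-1,r)$; since then $s-1=r-1$ and $r+1=r$, part (a) collapses to $R^D\star\wh{\s}_{\la,r-1,r,\mu}=-R^D\star\wh{\s}_{\la,r-1,r,\mu}$, forcing the $z$-term to vanish. The main obstacle I anticipate is the bookkeeping of the involution, together with the need to confirm that no altered pair slips into $\supp_d$; but the strict inequality $j>d$ makes this automatic.
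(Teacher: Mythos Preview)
Your proposal is correct and follows the same approach as the paper, which simply states that since $j>d$ the argument of \cite[Lemma 1.2]{BKT2} goes through verbatim, with part (b) an easy consequence of (a). You have correctly identified and spelled out the one extra verification needed for the $\star$-action---that the involution alters only pairs with second index $>d$ and hence preserves $\supp_d(R)$---which is precisely what the hypothesis $j>d$ buys.
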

\begin{proof}
Since $j>d$, the argument used in the proof of \cite[Lemma 1.2]{BKT2}
works here as well to establish part (a). Part (b) is an easy consequence of (a).
\end{proof}

Fix $k\geq 0$, let $\frakc=(\frakc_i)_{i\in\Z}$ and
$\frakd=(\frakd_i)_{i\in\Z}$ be two other sequences of variables such
that $\frakd_p=0$ for all $p>k+1$, and consider the relations
\begin{equation}
\label{fundrelsD}
(\frakc_p+\frakd_p)(\frakc_p-\frakd_p)+
2\sum_{i=1}^p (-1)^i \frakc_{p+i}\frakc_{p-i}=0 \ \ \text{for all} \ \, p > k.
\end{equation}

\begin{lemma}\label{commuteCD}
Let $\lambda=(\lambda_1,\ldots,\lambda_{j-1})$ and
$\mu=(\mu_{j+2},\ldots,\mu_\ell)$ be integer vectors, and $D$ be a
valid set of pairs. Assume that $j<d$, $\s^j=\s^{j+1}=\frakc$,
$\up^j=\up^{j+1}=\frakd$,
$(j,j+1)\in D$, and that for each $h>j+1$, $(j,h)\in D$ if and only if
$(j+1,h)\in D$. 

\medskip
\noin
{\em (a)} If $r,s\in \Z$ are such that $r+s > 2k+2$, then we have
\begin{equation}
\label{Drel1} 
R^D\star\wh{\s}_{\lambda,r,s,\mu} = - R^D\star\wh{\s}_{\lambda,s,r,\mu}  
\end{equation}
and 
\begin{equation}
\label{Drel2}
R^D\star\wh{\s}_{\lambda,k+1,k+1,\mu} = 0
\end{equation}
in the ring $\Z[\s,\up]$ modulo the relations coming from {\em
  (\ref{fundrelsD})}.

\medskip
\noin
{\em (b)} For any integer $r>k$, we have
\[
R^D\star\wh{\tau}_{\lambda,r+1,r,\mu} = R^D\star \wh{\s}_{\lambda,r+1,r,\mu}   
\]
in the ring $\Z[\s,\up,z]$ modulo the relations coming from {\em
  (\ref{fundrelsD})}.
\end{lemma}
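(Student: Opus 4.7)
The plan is to adapt the proof of Lemma \ref{commuteC} to the type D setting. The new ingredients are the star action governed by $\supp_d(R)$ and the modified relation (\ref{fundrelsD}), which contains the $\frakd^2$ correction term.

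For part (a), equation (\ref{Drel1}), I would follow the strategy of \cite[Lemma 1.3]{BKT2}. First factor
\[
R^D = R^{D''} \cdot \frac{1 - R_{j,j+1}}{1 + R_{j,j+1}},
\]
where $R^{D''}$ collects the remaining factors. By the symmetry hypothesis on $D$, $R^{D''}$ is invariant under the involution $\kappa$ on raising operators that swaps the indices $j$ and $j+1$. Expand both sides as sums over raising operators $R$ appearing in $R^D$ and pair each $R$ with $\kappa R$. The crucial compatibility is that $\kappa$ permutes $\supp_d(R) \cap \{j, j+1\}$, and the parity factor $(-1)^r$ in $\widehat{\sigma}^r = \sigma^r + (-1)^r \upsilon^r$ ensures that the hats at positions $j$ and $j+1$ align correctly across the pairing. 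The argument then reduces, as in BKT2, to a local identity at positions $j, j+1$, which follows from (\ref{fundrelsD}): whenever both positions carry $\widehat{\sigma}$, the product $\widehat{\sigma}^j_p \widehat{\sigma}^{j+1}_p$ equals $\frakc_p^2 - \frakd_p^2$; whenever both carry $\sigma$, it equals $\frakc_p^2$; and the mixed terms cancel in pairs under $\kappa$. The assumption $p + q > 2k + 2$ ensures that at most one of the shifted indices can equal $k+1$, so the terms with a $\frakd$ factor coming from the other position vanish, and the proof reduces to the type C case.

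For equation (\ref{Drel2}), the same pairing argument applies. The diagonal at $p = q = k+1$ is precisely where the $\frakd_{k+1}^2$ correction appears in (\ref{fundrelsD}), and the BKT2 telescoping, combined with the congruence $\widehat{\sigma}^j_{k+1} \widehat{\sigma}^{j+1}_{k+1} \equiv -2\sum_{i \geq 1} (-1)^i \frakc_{k+1+i}\,\frakc_{k+1-i}$ modulo (\ref{fundrelsD}), forces the whole expression to vanish.

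For part (b), expand $\widehat{\tau}^j_{r+1} = \widehat{\sigma}^j_{r+1} + z\,\widehat{\sigma}^j_r$ by linearity in position $j$:
\[
R^D \star \widehat{\tau}_{\lambda, r+1, r, \mu} = R^D \star \widehat{\sigma}_{\lambda, r+1, r, \mu} + z\, R^D \star \widehat{\sigma}_{\lambda, r, r, \mu}.
\]
Since $r > k$, the second term vanishes: if $r = k+1$, by (\ref{Drel2}); if $r > k+1$, by (\ref{Drel1}) applied with $p = q = r$, which yields $R^D \star \widehat{\sigma}_{\lambda, r, r, \mu} = -R^D \star \widehat{\sigma}_{\lambda, r, r, \mu}$, hence zero modulo the relations. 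The main obstacle will be the bookkeeping around the star action: unlike in type C, where all positions carry the same variable sequence $\frakc$, here the hat/unhat distinction at each position varies with $R$. The delicate point is verifying that the sign combinatorics from the parity $(-1)^r$ in $\widehat{\sigma}^r$ realigns correctly with the swap involution $\kappa$ across all configurations of $\supp_d(R) \cap \{j, j+1\}$, including the mixed cases in which exactly one of $j, j+1$ lies in $\supp_d(R)$.
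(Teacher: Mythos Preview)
Your part (b) is correct and matches the paper exactly: expand linearly in position $j$ and kill the extra term using part (a).

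For part (a), your overall strategy is sound and close to the paper's, but there is one imprecision and one structural difference worth noting. The imprecision is your claim that ``$r+s>2k+2$ ensures that at most one of the shifted indices can equal $k+1$.'' This is not literally true, since raising operators $R_{h,j}$ and $R_{j,h}$ with $h\notin\{j,j+1\}$ change the sum of the entries at positions $j$ and $j+1$. What actually rescues the argument is the observation you allude to but do not quite state: after pairing $R$ with $\kappa R$, the mixed configurations (exactly one of $j,j+1$ in $\supp_d$) produce $\wh{\s}^j_p+\wh{\s}^{j+1}_p=2\frakc_p$, and the fully hatted configurations produce $\wh{\s}^j_a\wh{\s}^{j+1}_b+\wh{\s}^j_b\wh{\s}^{j+1}_a=2\frakc_a\frakc_b-2\frakd_a\frakd_b$. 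So the $\kappa$-symmetrized expression is a pure $\frakc$-expression except for a single $\frakd_a\frakd_b$ correction, and it is \emph{this} correction (not the individual shifted indices) that is controlled by $r+s>2k+2$ for (\ref{Drel1}) and by the relation (\ref{fundrelsD}) at $p=k+1$ for (\ref{Drel2}).

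The structural difference is in (\ref{Drel2}). The paper does not run the pairing argument directly there; instead it first uses (\ref{Drel1}) and induction to reduce to the case $\mu=\emptyset$, and then invokes the Pfaffian Laplace expansion
\[
T_\rho=\sum_{h=2}^{2g}(-1)^h\,T_{\rho_1,\rho_h}\,T_{\rho_2,\ldots,\wh{\rho}_h,\ldots,\rho_{2g}}
\]
for $T_\rho:=R^D\star\wh{\s}_\rho$, which reduces everything to the two-row identity you wrote down. Your phrase ``BKT2 telescoping'' presumably points at this, but you should name the Laplace expansion explicitly: it is the mechanism that makes the star action tractable, since the $\supp_d$ rule was designed precisely so that the $T_\rho$ satisfy this Pfaffian recursion.
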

\begin{proof}
The proof of (\ref{Drel1}) is identical to that of \cite[Lemma
  1.3]{BKT2}. The proof of (\ref{Drel2}) follows the same argument,
using (\ref{Drel1}) and induction to reduce to the case when $\mu$ is
empty. For any integer vector $\rho$ with at most $d$ components,
define $T_\rho:=R^D \star \wh{\s}_{\rho}$.  If $g$ is the least integer
such that $2g\geq \ell$ and $\rho:= (\la,r,s)$, then we have the
relation
\[
T_\rho = \sum_{j=2}^{2g} (-1)^j T_{\rho_1,\rho_j} T_{\rho_2,\ldots,\wh{\rho}_j,\ldots,\rho_{2g}}.
\]
The proof is now completed by induction, as in loc.\ cit. For part
(b), we expand $R^D\star\wh{\tau}_{\lambda,r+1,r,\mu}$ and use
linearity in the $j$-th position to obtain
\[
R^D\star\wh{\tau}_{\lambda,r+1,r,\mu} = 
R^D\star \wh{\s}_{\lambda,r+1,r,\mu} + z\,R^D\, \wh{\s}_{\lambda,r,r,\mu}.
\]
Now part (a) implies that $R^D\star \wh{\s}_{\lambda,r,r,\mu}$ vanishes modulo the
relations (\ref{fundrelsD}).
\end{proof}

\section{Amenable elements: Type A theory}
\label{aeA}

\subsection{Definitions and main theorem}
\label{Aamenable}

As Lie theorists know well, type A is very special when compared to
the other Lie types. In the theory of amenable elements, this
manifests itself in the fact that we can work with dominant elements
instead of leading elements. The result is the simplified treatment
given here, which does not have a direct analogue in types B, C, and
D. Another difference in type A is that the order of application of
the divided difference operators is switched: we first use the left
divided differences, then the right ones. But by far the main
distinction between type A and the other classical types is that one
can use Jacobi-Trudi determinants, represented here by
$R^{\emptyset}$, instead of the more general raising operator
expressions $R^D$ that define theta and eta polynomials, which are
essential ingredients of the theory for the symplectic and orthogonal
groups.

If $\om\in S_n$ and $v\in S_m$, then $\om$ is called {\em
  $v$-avoiding} if $\om$ does not contain a subword
$(\om_{i_1},\ldots, \om_{i_m})$ having the same relative order as
$(v_1,\ldots,v_m)$.  The notion of $v$-avoidance also makes sense when
$\om$ is any integer vector $(\om_1,\ldots,\om_n)$ with distinct
components $\om_i$. We say that $\om$ is {\em dominant} if its code
$\gamma(\om)$ is a partition, or equivalently, if $\om$ is
$132$-avoiding (see \cite[(1.30)]{M} and \cite[Thm.\ 2.2]{R}).

For the next result, we refer to \cite[Exercise 2.2.1.5]{Kn} and
\cite[\S 2.2]{Stu}.

\begin{lemma}
\label{312avoid}
The following conditions on a permutation $\ome \in S_n$ are equivalent:

\medskip
\noin
{\em (a)} $\ome$ is $312$-avoiding; \qquad \qquad
{\em (b)} $\ome^{-1}$ is $231$-avoiding;

\medskip
\noin {\em (c)} $\ome$ has a reduced decomposition of the form $R_1\cdots
R_{n-1}$ where each $R_j$ is a (possibly empty) subword of $s_1\cdots
s_{n-1}$ and furthermore all simple reflections in $R_p$ are also
contained in $R_{p+1}$, for each $p<n-1$. 
\end{lemma}

\begin{defn}
\label{amenA}
A (right) {\em modification} of $\om\in S_n$ is a permutation $\om
\ome$, where $\ome\in S_n$ is such that $\ell(\om
\ome)=\ell(\om)-\ell(\ome)$, and $\ome$ is $231$-avoiding. A
permutation is {\em amenable} if it is a modification of a
dominant permutation.
\end{defn}

For any three integer vectors $\al,\be,\rho\in \Z^\ell$, which we view
as integer sequences with finite support, define
${}^{\rho}h^\be_\al:={}^{\rho_1}h^{\be_1}_{\al_1}\,{}^{\rho_2}h^{\be_2}_{\al_2}\cdots$.
Given any raising operator $R=\prod_{i<j}R_{ij}^{n_{ij}}$, let
$R\, {}^{\rho}h^\be_{\al} := {}^{\rho}h^\be_{R\al}$.

\begin{prop}[{\cite[(6.14)]{M}}]
\label{Adom}
Suppose that $\om\in S_n$ is dominant. Then we have
\[
\AS_\om = R^{\emptyset} \, {}^{\delta^\vee_{n-1}}h^{\la(\om)}_{\la(\om)}.
\]
\end{prop}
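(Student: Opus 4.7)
Plan: The idea is to identify $R^\emptyset\,{}^{\delta^\vee_{n-1}}h^\la_\la$ with a Jacobi--Trudi-type determinant and match it against the classical product formula for dominant double Schubert polynomials. Unfolding $R^\emptyset=\prod_{i<j}(1-R_{ij})$, which acts only on the lower indices, and noting that the rows with index $>\ell:=\ell(\la)$ contribute a trivial identity block (since $\la_i=0$ there), one obtains
\[
R^\emptyset\,{}^{\delta^\vee_{n-1}}h^\la_\la \;=\; \det\bigl({}^{i}h^{\la_i}_{\la_i+j-i}\bigr)_{1 \leq i, j \leq \ell}.
\]
On the other hand, the classical formula of Lascoux and Sch\"utzenberger (cf.\ \cite{M}) gives $\AS_\om=\prod_{(i,j)\in\la}(x_i-y_j)$ whenever $\om$ is dominant with code $\la$. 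Thus the proposition reduces to the polynomial identity
\[
\det\bigl({}^{i}h^{\la_i}_{\la_i+j-i}\bigr)_{1\leq i,j\leq\ell} \;=\; \prod_{(i,j)\in\la}(x_i-y_j).
\]

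I would prove this identity by induction on $\ell$, stating it generally enough (with an arbitrary alphabet in place of $X$) to accommodate the alphabet shift that arises in the step. The case $\ell=0$ is trivial. For the inductive step, the identity $h^1_a(X)=x_1^a$ combined with the generating function $\sum_{p}{}^1h^{\la_1}_p t^p=\prod_{j=1}^{\la_1}(1-y_j t)\cdot(1-x_1 t)^{-1}$ yields ${}^1h^{\la_1}_{\la_1+k}=x_1^k\,\pi_1$ for every $k\geq 0$, where $\pi_1:=\prod_{j=1}^{\la_1}(x_1-y_j)$. Thus the first row of the matrix equals $\pi_1\cdot(1,x_1,x_1^2,\dots,x_1^{\ell-1})$. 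Performing the column operations $C_j\mapsto C_j-x_1 C_{j-1}$ for $j=\ell,\ell-1,\dots,2$ in this order clears the first row to $(\pi_1,0,\dots,0)$ without altering the determinant. The crucial point is that the same operations transform the row-$i$ entry at column $j$ (with $i,j\geq 2$) via
\[
{}^i h^{\la_i}_{\la_i+j-i}(X,Y)-x_1\cdot {}^i h^{\la_i}_{\la_i+j-1-i}(X,Y) \;=\; {}^{i-1}h^{\la_i}_{\la_i+j-i}(X',Y),
\]
where $X':=(x_2,x_3,\dots)$; this is immediate from $(1-x_1 t)\prod_{k=1}^{i}(1-x_k t)^{-1}=\prod_{k=2}^{i}(1-x_k t)^{-1}$, with the $Y$-factor $\prod_{j=1}^{\la_i}(1-y_j t)$ carried through unchanged. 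Laplace expansion along the cleared first row produces $\pi_1$ times an $(\ell-1)\times(\ell-1)$ minor whose entry at position $(i',j')$, after the reindexing $i=i'+1$, $j=j'+1$, equals ${}^{i'}h^{\mu_{i'}}_{\mu_{i'}+j'-i'}(X',Y)$ with $\mu:=(\la_2,\dots,\la_\ell)$. The induction hypothesis, applied to $\mu$ and the alphabet $X'$, evaluates this minor to $\prod_{(i',j')\in\mu}(x_{i'+1}-y_{j'})=\prod_{(i,j)\in\la,\,i\geq 2}(x_i-y_j)$, and multiplication by $\pi_1$ yields the desired product.

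The main obstacle is the column-reduction bookkeeping: one must verify that the upper parameter $\la_i$ of each entry is preserved under the row-wise transformation, that the $Y$-variables are not affected, and that the induction is framed so the simultaneous passage from $\la$ to $\mu$ and from $X$ to $X'$ is admissible. No further ingredients beyond the classical product formula for $\AS_\om$ and the generating-function identities underpinning Lemma~\ref{ddylemmA} are required.
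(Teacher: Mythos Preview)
Your argument is correct but proceeds along a genuinely different route from the paper's own proof. The paper argues by descending induction on $\ell(\om)$ within the weak Bruhat order: it takes as base case the known formula for the longest element $\om_0$, and for a dominant $\om\neq\om_0$ finds $j$ so that $s_j\om$ is dominant of greater length, then applies $\partial^y_j$ together with Lemma~\ref{ddylemmA} to pass from $R^\emptyset\,{}^{\delta^\vee_{n-1}}h^{\la(s_j\om)}_{\la(s_j\om)}$ to $R^\emptyset\,{}^{\delta^\vee_{n-1}}h^{\la(\om)}_{\la(\om)}$. Your approach instead invokes the stronger classical input $\AS_\om=\prod_{(i,j)\in\la}(x_i-y_j)$ for dominant $\om$ directly, rewrites $R^\emptyset$ as a Jacobi--Trudi determinant (correctly noting that rows beyond $\ell(\la)$ form a unitriangular block), and evaluates that determinant by elementary column reduction and induction on $\ell(\la)$. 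Your generating-function computation, the identity ${}^ih^{\la_i}_p - x_1\,{}^ih^{\la_i}_{p-1} = {}^{i-1}h^{\la_i}_p(X',Y)$, and the alphabet-shifted induction are all sound. What you gain is a self-contained, purely polynomial verification that avoids divided differences entirely. What the paper's method buys is uniformity: the same divided-difference descent mechanism is reused immediately afterward in Proposition~\ref{amenableA} and, crucially, in types B, C, and D (Propositions~\ref{leadingN} and~\ref{domnD}), where no simple product formula is available and your determinantal column-reduction has no obvious analogue.
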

\begin{proof}
We use descending induction on $\ell(\om)$. Let $\om_0:=(n,\ldots,1)$
denote the longest element in $S_n$. One knows from \cite{Las} and
\cite[(3.5)]{M} that the equation
\[
\AS_{\om_0} = R^{\emptyset}\,
   {}^{\delta^\vee_{n-1}}h_{\delta_{n-1}}^{\delta_{n-1}}
\]
holds in $\Z[X,Y]$, so the result is true when $\om=\om_0$.

Suppose that $\om\neq \om_0$ and $\om$ is dominant of shape
$\la$. Then $\la\subset \delta_{n-1}$ and $\la\neq \delta_{n-1}$. Let
$r\geq 1$ be the largest integer such that $\la_i=n-i$ for $i\in
[1,r]$, and let $j:=\la_{r+1}+1=\om_{r+1}\leq n-r-1$.  Then $s_j\om$
is dominant of length $\ell(\om)+1$ and
$\la(s_j\om)=\la(\om)+\epsilon_{r+1}$.  Using Lemma \ref{ddylemmA} and
the left Leibnitz rule, we deduce that for any integer sequence
$\al=(\al_1,\ldots,\al_n)$, we have
\[
\partial^y_j\left({}^{\delta^\vee_{n-1}}h^{\la(s_j\om)}_{\al}\right) = 
{}^{\delta^\vee_{n-1}}h^{\la(\om)}_{\al-\epsilon_{r+1}}.
\]
We conclude that
\[
\AS_\om = \partial^y_j(\AS_{s_j\om}) = 
\partial^y_j\left(R^{\emptyset} \, {}^{\delta^\vee_{n-1}}h^{\la(s_j\om)}_{\la(s_j\om)}\right)=
R^{\emptyset} \, {}^{\delta^\vee_{n-1}}h^{\la(\om)}_{\la(\om)}.
\]
\end{proof}

\begin{defn}
Let $\om$ be an amenable permutation with code $\gamma$ and shape
$\la$, with $\ell=\ell(\la)$. Define two sequences $\f=\f(\om)$ and
$\g=\g(\om)$ of length $\ell$ as follows. For $1\leq j \leq \ell$,
set
\[
\f_j:=\max(i\ |\ \gamma_i\geq \la_j)
\]
and let
\[
\g_j:=\f_\ci+\la_\ci-\ci,
\]
where $\ci$ is the least integer such that $\ci\geq j$ and
$\la_\ci>\la_{\ci+1}$. We call $\f$ the {\em right flag} of $\om$, and
$\g$ the {\em left flag} of $\om$.
\end{defn}

It is clear from Lemma \ref{Mcdlemma} that the right flag $\f$ of an
amenable permutation is a weakly increasing sequence consisting of
right descents of $\om$. We will show that the left flag $\g$ is a
weakly decreasing sequence consisting of left descents of $\om$.

\begin{prop}
\label{amenableA}
Suppose that $\wh{\om}\in S_n$ is dominant with $\wh{\la}:=\la(\wh{\om})$.
Let $\ome$ be a $231$-avoiding permutation such that 
$\ell(\wh{\om}\ome)=\ell(\wh{\om})-\ell(\ome)$, and set $\om:=\wh{\om} \ome$, 
$\gamma:=\gamma(\om)$, and $\la:=\la(\om)$.  Then the sequence
$\delta^\vee_{n-1}+\wh{\la}-\la$ is weakly increasing, and 
\[
\AS_\om = R^{\emptyset} \, {}^{\delta^\vee_{n-1}+\wh{\la}-\la}h_{\la}^{\wh{\la}}.
\]
Moreover, if $\la_\ci>\la_{\ci+1}$, then $\wh{\la}_\ci$ is a left descent of 
$\om$, $\ci+\wh{\la}_\ci-\la_\ci$ is a right descent of $\om$, and we have
$\ci+\wh{\la}_\ci-\la_\ci= \max(i\ |\ \gamma_i\geq \la_\ci)$.
\end{prop}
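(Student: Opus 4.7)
The plan is to induct on $\ell(\ome)$. The base case $\ome=1$ gives $\om = \wh{\om}$ and $\la = \wh{\la}$; the formula, monotonicity, and corner statements all follow from Proposition \ref{Adom} together with the standard combinatorics of the code of a dominant permutation. For the inductive step, applying Lemma \ref{312avoid} to the $312$-avoiding permutation $\ome^{-1}$ yields a canonical reduced factorisation $\ome^{-1} = R_1\cdots R_{n-1}$. Peeling off the leftmost reflection $s_i$ of $\ome^{-1}$---equivalently, the rightmost $s_i$ of $\ome$---produces $\ome = \ome' s_i$ with $\ome'$ still $231$-avoiding and $\ell(\ome') = \ell(\ome) - 1$. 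Setting $\om' := \wh{\om}\ome'$, we have $\om = \om' s_i$ with $\ell(\om) = \ell(\om') - 1$, so $\AS_\om = \partial^x_i \AS_{\om'}$ by (\ref{ddAeqin}). By induction $\AS_{\om'} = R^{\emptyset}\,{}^{\rho'}h^{\wh{\la}}_{\la'}$ with $\rho' := \delta^\vee_{n-1} + \wh{\la} - \la'$ weakly increasing and $\la' := \la(\om')$.

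The heart of the argument is the computation of $\partial^x_i$ applied to this expression. By Lemma \ref{ddylemmA}, the atoms ${}^{\rho'_j}h^{\wh{\la}_j}_{\la'_j}$ active under $\partial^x_i$ are exactly those with $\rho'_j = \pm i$; since $\rho'$ is positive and weakly increasing, these form a consecutive run of indices. I would apply the right Leibnitz rule and collapse the resulting multi-term expansion by combining the action of $s_i$ on previously-active atoms with the alternating identity of Lemma \ref{commuteA}(a), which operates on neighbouring positions sharing the same family data. The surviving contribution comes from the bottom index $j^*$ of the active run, and converts ${}^{\rho'_{j^*}}h^{\wh{\la}_{j^*}}_{\la'_{j^*}}$ into ${}^{\rho'_{j^*}+1}h^{\wh{\la}_{j^*}}_{\la'_{j^*}-1}$. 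This corresponds to the shape change $\la = \la' - \epsilon_{j^*}$, matching Lemma \ref{Mcdlemma}, and the updated sequence $\delta^\vee_{n-1} + \wh{\la} - \la = \rho' + \epsilon_{j^*}$ remains weakly increasing because $\rho'_{j^*+1} > \rho'_{j^*}$ by the choice of $j^*$.

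The corner assertions are a consequence of this bookkeeping. At a corner $\ci$ of $\la$, the integer $\f_\ci := \ci + \wh{\la}_\ci - \la_\ci = \rho_\ci$ is the position at which the last divided difference decreasing row $\ci$ acts in the sequence of peelings, and Lemma \ref{Mcdlemma} then delivers $\gamma_{\f_\ci} = \la_\ci$ and $\f_\ci = \max(i \mid \gamma_i \geq \la_\ci)$, making $\f_\ci$ a right descent of $\om$. For the left-descent claim I would pass to $\om^{-1} = \ome^{-1}\wh{\om}^{-1}$, a left modification of the dominant permutation $\wh{\om}^{-1}$ via Remark (b) following Definition \ref{amenA}, and run the parallel argument with $\partial^y$-divided differences, translating the right descent of $\om^{-1}$ at position $\wh{\la}_\ci$ into a left descent of $\om$. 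The principal obstacle is the Leibnitz-rule manipulation of paragraph two: controlling the cross-terms produced when multiple atoms are simultaneously active under $\partial^x_i$ requires a careful reorganisation of the $s_i$-shifted products using the alternating identities of Lemma \ref{commuteA}(a), foreshadowing the more subtle type-BCD analyses later in the paper.
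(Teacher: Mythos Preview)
Your inductive setup matches the paper's, but the Leibnitz step has a real gap. You propose to collapse a run of active positions $\rho'_j = i$ via Lemma~\ref{commuteA}(a), but that lemma requires $\sigma^j = \sigma^{j+1}$, meaning $\rho'_j = \rho'_{j+1}$ \emph{and} $\wh{\la}_j = \wh{\la}_{j+1}$; the equality $\rho'_j = \rho'_{j+1}$ forces only $\wh{\la}_j - \wh{\la}_{j+1} = \la'_j - \la'_{j+1} - 1$, which may well be positive. Worse, the Leibnitz expansion you write down contains $s_i$-twisted factors $s_i({}^i h^{\wh{\la}_j}_{\al_j})$, and these are not of the shape ${}^r h^s_p$ at all, so the alternating identity cannot be applied to that expression. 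The paper's mechanism is different: when the divided differences are applied in the order dictated by the canonical reduced decomposition of $\ome$ (each $R_p$ a decreasing subword with $R_{p+1}\subset R_p$), at each step there is a \emph{unique} index $j$ with $\rho'_j$ equal to the current $i$, so Lemma~\ref{ddylemmA} plus Leibnitz produces a single term outright and no cancellation is needed. This uniqueness is exactly what the paper's ``move'' bookkeeping on the code (the passage containing equation~(\ref{moabeq})) encodes, and it also guarantees that the unique active index coincides with the row $j^*$ where the shape drops. The moves are not merely a device for the descent claims---they are what makes the one-line formula derivation legitimate.

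Your left-descent argument via $\om^{-1}$ is also incomplete. Passing to $\om^{-1}$ conjugates both shapes, so the parallel right-descent claim for $\om^{-1}$ identifies $\la_\ci + \wh{\la}'_{\la_\ci} - \ci$ (at the conjugate corner) as a right descent of $\om^{-1}$, and matching this with $\wh{\la}_\ci$ requires a separate argument you have not supplied. The paper avoids this detour: it observes that the left descents of any modification of $\wh{\om}$ lie among the distinct parts $p_1,\ldots,p_t$ of $\wh{\la}$, then tracks across each move which single $p_e$ can be lost and checks that the one relevant at a corner $\ci$ always survives.
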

\begin{proof}
Suppose that $\wh{\om}$ is of shape $\wh{\la} = \wh{\gamma}
= (p_1^{n_1},p_2^{n_2}\ldots,p_t^{n_t})$, where $p_1>\cdots > p_t$.
Then the right descents of
$\wh{\om}$ are at positions $d_1:=n_1, d_2:=n_1+n_2, \ldots, d_t:=
n_1+\cdots+n_t$.  Since we have $\wh{\om}_j<\wh{\om}_{j+1}$ for all $j\neq d_r$
for $r\in [1,t]$, we deduce that
\[
\wh{\om}_1 = p_1+1, \wh{\om}_{d_1+1}=p_2+1,\ldots, \wh{\om}_{d_{t-1}+1}=p_t+1,
\wh{\om}_{d_t+1}=1.
\]
Moreover, since $\wh{\om}$ is $132$-avoiding, it follows that the left
descents of $\wh{\om}$ are $p_1,\ldots,p_t$. Finally, Proposition
\ref{Adom} gives
\begin{equation}
\label{propAdcor}
\AS_{\wh{\om}} =  R^{\emptyset} \, {}^{\delta^\vee_{n-1}}h^{\wh{\la}}_{\wh{\la}}
\end{equation}
so the result holds when $\ome=1$ and $\om=\wh{\om}$ is dominant.

Suppose next that $\om:=\wh{\om} \ome$ for some $231$-avoiding
permutation $\ome$ such that
$\ell(\wh{\om}\ome)=\ell(\wh{\om})-\ell(\ome)$. Lemma
\ref{312avoid} implies that $\ome$ has a reduced decomposition of the
form $R_1\cdots R_{n-1}$ where each $R_j$ is a (possibly empty)
subword of $s_{n-1}\cdots s_1$ and all simple reflections in $R_{p+1}$
are also contained in $R_p$, for every $p\geq 1$.  Now repeated
application of (\ref{ddAeqin}), Lemma \ref{ddylemmA}, and the right
Leibnitz rule (\ref{LeibRN}) in equation (\ref{propAdcor}) give
\[
\AS_\om= \partial^x_{\ome^{-1}}(\AS_{\wh{\om}}) = R^{\emptyset} \,
   {}^{\delta_{n-1}^{\vee}
     +\wh{\la}-\la}h^{\wh{\la}}_{\la}.
\]
We will show that the sequence $\delta^\vee_{n-1}+\wh{\la}-\la$
is weakly increasing and verify the last assertion, about the left
and right descents of $\om$.

Using Lemma \ref{Mcdlemma}, we study the right action of the
successive simple transpositions in the reduced decomposition
$R_1\cdots R_{n-1}$ for $\ome$ on the code $\wh{\gamma}$ of
$\wh{\om}$.  The action of these on $\wh{\gamma}$ is by a finite
sequence of {\em moves} $\al\mapsto \al'$, where $\al:=\gamma(v)$ and
$\al':=\gamma(v')$ for some $v,v'\in S_n$. Here $v'=vs_{j-1}\cdots
s_i$ for some $i<j$ such that $\ell(v')=\ell(v)-j+i$, and
$s_{j-1}\cdots s_i$ is a subword of some $R_p$ with $j-i$
maximal. Since the initial code $\wh{\gamma}$ is weakly decreasing, we
have $\al_i\geq \cdots \geq \al_{j-1}>\al_j$, and
\[
\al'=(\al_1,\ldots,\al_{i-1},\al_j,\al_i-1,\ldots,\al_{j-1}-1,\al_{j+1},
\al_{j+2},\ldots).
\]
We say that the move is performed on the interval $[i,j]$, or is an
$[i,j]$-move. The {\em procedure} is defined as the performance of
finitely many moves to $\wh{\gamma}$, ending in the code
$\gamma$. This describes the effect of multiplying $\wh{\om}$ on the
right by $\ome$.

\begin{example}
Suppose that $\wh{\om}:=(5,6,7,4,3,8,2,1)$ in $S_8$ with code 
$$\wh{\gamma}=(4,4,4,3,2,2,1,0).$$ If $\ome:=s_7s_6s_5s_4s_3s_2s_7s_6s_5s_4s_6s_5$,
then $\wh{\om} \ome = (5,1,6,2,3,7,4,8)$ and $\gamma=\gamma(\om \ome) = (4,0,3,0,0,1,0,0)$.
The procedure from $\wh{\gamma}$ to $\gamma$ consists of a $[2,8]$-move, followed by a 
$[4,8]$-move, followed by a $[5,7]$-move:
\[
(4,4,4,3,2,2,1,0) \mapsto (4,0,3,3,2,1,1,0) \mapsto 
(4,0,3,0,2,1,0,0)\mapsto (4,0,3,0,0,1,0,0).
\]
\end{example}

Notice that after an $[i,j]$-move $\al\mapsto \al'$, we have
\begin{equation}
\label{moabeq}
\al'_i=\gamma_i=\min(\al'_r\ |\ r\in [i,j]) \geq \max(\al'_r\ | \ r>j).
\end{equation}
Let $\mu$ and $\mu'$ be the shapes of $v$ and $v'$, respectively, and 
set
$f:=\delta^\vee_{n-1}+\wh{\la}-\mu$
(respectively, $f':=\delta^\vee_{n-1}+\wh{\la}-\mu'$). We then have
\[
\mu'=(\mu_1,\ldots,\mu_{r-1},\mu_r-1,\ldots,\mu_{s-1}-1,\mu_s,\mu_{s+1},\ldots)
\]
for some $r<s$ with $s-r=j-i$, and 
\[
f'=(f_1,\ldots,f_{r-1},f_r+1,\ldots,f_{s-1}+1,f_s,f_{s+1},\ldots).
\]
Since $\mu_s = \al_j \leq \al_{j-1}-1 = \mu_{s-1}-1$, we deduce that
$f'_s-f'_{s-1} = f_s-f_{s-1}-1 = \mu_s-\mu_{s-1}-1\geq 0$. It follows
by induction on the number of moves that the sequence $f$ is weakly
increasing. 

Suppose that $\mu'_d>\mu'_{d+1}$ for some $d$. Using (\ref{moabeq})
and induction on the number of moves, we deduce that
$f'_d=\max(i\ |\ \al'_i\geq \mu'_d)$. This implies that for any $\ci$
such that $\la_\ci>\la_{\ci+1}$, we have $\ci+\wh{\la}_\ci-\la_\ci =
\max(i\ |\ \gamma_i\geq \la_\ci)$, and hence that
$\ci+\wh{\la}_\ci-\la_\ci$ is a right descent of $\om$, in view of
Lemma \ref{Mcdlemma}.

We claim that $\wh{\la}_d$ is a left descent of $v'$. Clearly the left
descents of $v$ and $v'$ are subsets of $\{p_1,\ldots, p_t\}$. There
is at most one left descent $p_e$ of $v$ that is not a left descent of
$v'$, and this occurs if and only if $v_j=p_e$ and $v_h=p_e+1$ for
some $h\in [i,j-1]$. Since $\al_h\geq \cdots \geq \al_{j-1}>\al_j$, we
deduce that $\al_h=\cdots = \al_{j-1}=\al_j+1$, and hence
$\mu'_{s-(j-h)+1} = \cdots = \mu'_s=\mu_s$. We conclude that
$\wh{\la}_d\neq p_e$, completing the proof of the claim, and the
proposition.
\end{proof}

\begin{example}
Let $\wh{\om}:=(4,5,6,2,1,3)$, a dominant permutation in $S_6$ with
shape $\wh{\la} = \gamma(\wh{\om})=(3,3,3,1)$. Take
$\ome:=s_4s_3s_2s_1s_4s_3$ in Proposition \ref{amenableA}, so that
$\om=\wh{\om} \ome = (1,4,2,5,6,3)$, with
$\gamma(\wh{\om}\ome)=(0,2,0,1,1,0)$ and $\la=(2,1,1)$. We have
$\delta_5^\vee+\wh{\la}-\la= (2,4,5,5,5)$, and deduce that
\[
\AS_\om = R^{\emptyset} \, {}^{(2,4,5,5,5)}h_{(2,1,1,0)}^{(3,3,3,1)} =
R^{\emptyset} \, {}^{(2,4,5)}h_{(2,1,1)}^{(3,3,3)}.
\]
\end{example}

\begin{thm}
\label{TamvexA}
For any amenable permutation $\om$, we have 
\[
\AS_\om = R^{\emptyset} \, {}^{\f(\om)}h_{\la(\om)}^{\g(\om)}.
\]
\end{thm}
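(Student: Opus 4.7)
The plan is to derive the theorem from Proposition \ref{amenableA}, which already supplies the identity
\[
\AS_\om \,=\, R^{\emptyset}\,{}^{\rho}h^{\be}_{\la},
\]
with $\rho := \delta^\vee_{n-1} + \wh{\la} - \la$, $\be := \wh{\la}$, and $\la = \la(\om)$. The concluding assertion of that proposition gives $\rho_\ci = \f_\ci$ and $\be_\ci = \g_\ci$ at every critical index $\ci$ (i.e.\ whenever $\la_\ci > \la_{\ci+1}$). Consequently, all discrepancies between $(\rho, \be)$ and $(\f, \g)$ are confined to positions $j$ lying strictly inside a maximal block $[\ci'+1, \ci]$ on which $\la$ is constant, and on each such block we have $\f_j = \f_\ci$ and $\g_j = \g_\ci$.

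The first structural step is to check that within such a block, the weak monotonicity of $\rho$ (proved in Proposition \ref{amenableA}) together with the partition property of $\wh{\la}$ forces $\wh{\la}_{j+1} - \wh{\la}_{j} \in \{-1, 0\}$, and hence the degree identity
\[
(\f_\ci - \rho_j) + (\be_j - \g_\ci) = \ci - j \qquad \text{for every } j\in [\ci'+1, \ci].
\]
Using the standard recurrences
\[
{}^{r+1}h^s_p - {}^{r}h^s_p = x_{r+1}\,{}^{r+1}h^s_{p-1}, \qquad
{}^{r}h^{s-1}_p - {}^{r}h^s_p = y_s\,{}^{r}h^{s-1}_{p-1},
\]
applied iteratively, I would then expand
\[
{}^{\rho_j}h^{\be_j}_{\la_j} \,=\, {}^{\f_\ci}h^{\g_\ci}_{\la_j} + \sum_{m=1}^{\ci - j} C_{j, m}\,{}^{\f_\ci}h^{\g_\ci}_{\la_j - m},
\]
where each coefficient $C_{j,m}$ is a polynomial in the variables $x_{\rho_j+1}, \ldots, x_{\f_\ci}$ and $y_{\g_\ci+1}, \ldots, y_{\be_j}$; the upper bound $\ci - j$ on the summation index is forced by the degree identity above.

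Substituting these expansions position by position into ${}^{\rho}h^{\be}_{\la}$ and distributing reduces the theorem to showing that $R^{\emptyset}\,{}^{\f}h^{\g}_{\mu}$ vanishes for every nonzero vector $\la - \mu$ whose support lies in a single block $[\ci'+1, \ci]$ and whose entries $m_j := \la_j - \mu_j$ satisfy $m_j \in [0, \ci - j]$. Within such a block the variable sequence ${}^{\f_\ci}h^{\g_\ci}$ is independent of $j$, so Lemma \ref{commuteA}(a) applies to any adjacent pair of positions drawn from the block; and the bound on each $m_j$ forces a collision among the shifted indices $\mu_j - j$ inside the block. Iterating Lemma \ref{commuteA}(a) to swap adjacent positions brings the configuration to one containing an adjacent pair of values $(p, p+1)$ at two positions with equal variable sequence, whereupon a final application of the same lemma yields vanishing. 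The main obstacle is organizing these antisymmetrizations cleanly when several $m_j$'s within the same block are simultaneously nonzero; I expect to manage this by induction on $\sum_j m_j$, at each step reducing at the leftmost position with $m_j > 0$ and using the collision there to either send the term to zero outright or to migrate mass to neighboring positions within the block.
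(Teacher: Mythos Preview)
Your approach is correct and shares its core with the paper's proof: both start from Proposition~\ref{amenableA} and then equalize the superscripts within each maximal block where $\la$ is constant, using the alternating property of $R^\emptyset$. The paper does this one step at a time via Lemma~\ref{commuteA}(b): since adjacent positions $r,r+1$ in a block satisfy either $(\wh{\la}_r,\rho_r)=(\wh{\la}_{r+1},\rho_{r+1}-1)$ or $(\wh{\la}_r,\rho_r)=(\wh{\la}_{r+1}+1,\rho_{r+1})$, the sequence at position $r$ is always a one-variable perturbation of that at $r+1$, and Lemma~\ref{commuteA}(b) replaces it without changing the value. Your route instead expands every factor in terms of the target sequence ${}^{\f_\ci}h^{\g_\ci}$ and shows the correction terms vanish via Lemma~\ref{commuteA}(a). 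Since Lemma~\ref{commuteA}(b) is itself a one-line consequence of part~(a), the two arguments are equivalent in content; the paper's packaging is just cleaner.

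Your stated ``main obstacle'' is not a genuine obstacle, and the proposed induction on $\sum_j m_j$ with ``migration of mass'' is unnecessary and not obviously workable as you describe it. Once you have $m_j\in[0,\ci-j]$, the values $m_j+j$ for $j$ in the block $[\ci'+1,\ci]$ all lie in $[\ci'+1,\ci]$; if $m\neq 0$ they cannot be pairwise distinct (otherwise summing forces $\sum_j m_j=0$), so two positions in the block have equal shifted index $\mu_j-j$. Since those positions carry the same variable sequence ${}^{\f_\ci}h^{\g_\ci}$, the corresponding rows of the Jacobi--Trudi determinant coincide and the term vanishes outright --- equivalently, iterating Lemma~\ref{commuteA}(a) within the block brings the colliding pair adjacent and kills it. No further bookkeeping is required.
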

\begin{proof}
We may assume we are in the situation of Proposition
\ref{amenableA}, so that $\om=\wh{\om} \ome$, with
$\wh{\la}=\la(\wh{\om})$ and $\la=\la(\om)$.  Choose $j\in
    [1,\ell]$ and let $\ci$ be the least integer such that $\ci\geq j$
    and $\la_\ci>\la_{\ci+1}$. Then we have $\la_j=\la_{j+1}=\cdots =
    \la_\ci$.  As the sequence $f:=\delta_{n-1}^\vee+\wh{\la}-\la$ is
    weakly increasing, we deduce that if $\la_r=\la_{r+1}>0$, then
    either (i) $\wh{\la}_r = \wh{\la}_{r+1}$ and $f_r=f_{r+1}-1$, or
    (ii) $\wh{\la}_r = \wh{\la}_{r+1}+1$ and $f_r=f_{r+1}$. Theorem
    \ref{TamvexA} follows from this and induction on $\ci-j$, using
    Lemma \ref{commuteA}(b) in Proposition \ref{amenableA}.
\end{proof}

\begin{example}
Consider the amenable permutation $\om:=(3,4,6,1,5,2)$ in $S_6$. We
then have $\gamma(\om)=(2,2,3,0,1,0)$, $\la(\om)=(3,2,2,1)$,
$\f(\om)=(3,3,3,5)$, and $\g(\om)=(5,2,2,2)$. Theorem \ref{TamvexA}
gives
\[
\AS_{346152} = R^{\emptyset} \, {}^{(3,3,3,5)}h_{(3,2,2,1)}^{(5,2,2,2)}.
\]
\end{example}

Recall from \cite{LS1, LS2} that a permutation $\om$ is {\em
  vexillary} if and only if it is $2143$-avoiding.  Equivalently,
$\om$ is vexillary if and only if $\la(\om^{-1})=\la(\om)'$.

\begin{thm}
\label{amenvexthm}
The permutation $\om$ is amenable if and only if $\om$ is vexillary.
\end{thm}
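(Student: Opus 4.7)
I would prove both implications of Theorem \ref{amenvexthm} separately, since one direction is essentially an immediate consequence of Theorem \ref{TamvexA} while the other requires a combinatorial construction.

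For amenable $\Rightarrow$ vexillary, I would invoke Theorem \ref{TamvexA}. For an amenable $\om$, that theorem yields
\[
\AS_\om \;=\; R^{\emptyset}\,{}^{\f(\om)}h_{\la(\om)}^{\g(\om)},
\]
and setting $Y=0$ specializes this to a flagged Jacobi-Trudi determinant $s_{\la(\om)}^{\f(\om)}(X)$ of shape $\la(\om)$. By the classical theorem of Lascoux-Sch\"utzenberger \cite{LS1, LS2} recalled in the introduction, such a flagged Schur polynomial coincides with the single Schubert polynomial of the unique vexillary permutation with shape $\la(\om)$ and flag $\f(\om)$. Since Schubert polynomials are linearly independent over $\Z[X]$, $\om$ must itself equal that vexillary permutation.

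For vexillary $\Rightarrow$ amenable, I would construct a dominant $\wh\om$ and a $231$-avoiding $\ome$ satisfying $\om=\wh\om\ome$ and $\ell(\om)=\ell(\wh\om)-\ell(\ome)$ by inverting the procedure analyzed in the proof of Proposition \ref{amenableA}. Starting from the code $\gamma=\gamma(\om)$, iteratively identify an ascent $i$ with $\gamma_i<\gamma_{i+1}$ and apply the inverse of the corresponding $[i,j]$-move, which amounts to right-multiplying the current permutation by a reduced word $s_i s_{i+1}\cdots s_{j-1}$ and strictly increases length. The procedure terminates when the code becomes weakly decreasing, producing a dominant $\wh\om$. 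Recording the simple reflections used, their product in the appropriate order is $\ome^{-1}$; by verifying that the resulting reduced decomposition fits the form $R_1\cdots R_{n-1}$ with $R_p\subseteq R_{p+1}$, Lemma \ref{312avoid}(c) identifies $\ome^{-1}$ as $312$-avoiding, equivalently $\ome$ as $231$-avoiding.

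The main obstacle lies in the converse direction: verifying the canonical nested form of $\ome^{-1}$. The vexillary hypothesis enters here decisively. A $2143$-pattern in $\om$ would force a pair of $[i,j]$- and $[i',j']$-moves in the lifting to cross but fail to nest, producing a simple reflection that appears in some $R_p$ but not in $R_{p+1}$ and thus violating the nesting required by Lemma \ref{312avoid}(c). The absence of $2143$-patterns rules this out, a fact to be established by a careful case analysis of how consecutive inverse moves interact, using the inequality \eqref{moabeq} applied in reverse. Length additivity and dominance of the terminal code are both automatic from the procedure, since each inverse move changes length by exactly one and termination is forced precisely when the code is a partition.
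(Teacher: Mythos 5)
Your proposal takes a genuinely different route from the paper in the first direction, but both directions have gaps that would need to be filled.

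For amenable $\Rightarrow$ vexillary, the paper never invokes Theorem~\ref{TamvexA}: it works directly with Macdonald's characterization of vexillary codes (conditions (i) and (ii) on $\gamma$) and shows that the moves of the ``procedure'' in Proposition~\ref{amenableA} preserve these conditions, using the inequality~(\ref{moabeq}). Your approach instead specializes Theorem~\ref{TamvexA} at $Y=0$ and tries to recognize the result as a vexillary Schubert polynomial. This is clean in spirit, but the crucial step is not supplied by \cite{LS1,LS2}: those references give only the \emph{forward} implication (vexillary $\Rightarrow$ flagged Schur polynomial of the shape). To run your argument you need the converse — that a Schubert polynomial which equals a partition-shaped flagged Jacobi–Trudi determinant is necessarily vexillary — or, equivalently, you must \emph{produce} the vexillary permutation with shape $\la(\om)$ and flag $\f(\om)$, which requires verifying the compatibility conditions on the pair $(\la(\om),\f(\om))$ (e.g.\ $\f_j\geq j$, $\f$ constant on blocks where $\la$ is constant, and the appropriate upper bounds) and citing the relevant result of Wachs or the discussion around Macdonald's (1.36)--(1.41). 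Neither the converse nor the compatibility check is done. There is also a conjugation mismatch to resolve: compare the $\la'$ that appears in Theorem~\ref{dbleAloci} against the $\la(\om)$ you use.

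For vexillary $\Rightarrow$ amenable, your sketch and the paper's argument share the same skeleton (an inverse procedure lifting $\gamma(\om)$ to a dominant code), but you defer the entire content of the argument. You never specify which $j$ accompanies a chosen ascent $i$, and ``iteratively identify an ascent and apply the inverse move'' does not, without a prescribed order, obviously produce an $\ome^{-1}$ with the nested reduced form of Lemma~\ref{312avoid}(c); different greedy orders can yield different products. The paper instead defines everything up front: \emph{initial indices}, their \emph{associated intervals} (shown to be disjoint or nested precisely by vexillary condition (i)), and the \emph{depth} ordering of these intervals, from which the canonical $R_1\cdots R_{n-1}$ form of $\ome^{-1}$ is immediate. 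It then uses vexillary condition (ii) to prove that the terminal code is a partition. Your proposal names both of these as things ``to be established by a careful case analysis'' — but those case analyses \emph{are} the proof. As written, the proposal is an outline that correctly identifies where the difficulty lies without resolving it.
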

\begin{proof}
According to \cite[(1.32)]{M}, a permutation is vexillary if and only
if its code $\gamma$ satisfies the following two conditions, for any
$i<j$: (i) If $\gamma_i\leq \gamma_j$, then $\gamma_i\leq\gamma_k$ for
any $k$ with $i<k<j$; (ii) If $\gamma_i>\gamma_j$, then the number of
$k$ with $i<k<j$ and $\gamma_k<\gamma_j$ is at most
$\gamma_i-\gamma_j$.

Assume first that $\om$ is amenable, so that $\om=\wh{\om}\ome$ for
some dominant permutation $\wh{\om}$ and $231$-avoiding permutation
$\ome$. Using Lemma \ref{Mcdlemma}, we see that the code $\wh{\gamma}$
of $\wh{\om}$ is transformed into the code $\gamma$ of $\om$ by the
moves of the procedure described in the proof of Proposition
\ref{amenableA}.

We claim that the sequence $\gamma$ is a vexillary code.  It follows
from the inequalities (\ref{moabeq}) that for any $[i,j]$-move of the
procedure, we have $\gamma_s\leq \gamma_i\leq \gamma_r$ for every
$r\in [i,j]$ and $s>j$. Moreover, if $r\neq i$ for all $[i,j]$-moves
of the procedure, then $\gamma_s\leq \gamma_r$ for all $s>r$.  It is
easy to see from this that $\gamma$ satisfies the vexillary conditions
(i) and (ii). Indeed, choose $r<s$ such that $\gamma_r\leq \gamma_s$,
and some $t\in [r,s]$.  If $r=i$ for some $[i,j]$-move, then we have
$\gamma_r\leq \gamma_k$ for all $k\in [i,j]$, while if $k>j$, then we
must have $\gamma_r=\gamma_s=\gamma_k$. Therefore, $\gamma_r\leq
\gamma_t$. If $r\neq i$ for all $[i,j]$-moves, then
$\gamma_r=\gamma_s$ and hence $\gamma_r=\gamma_t$. To prove (ii),
suppose that $r<k<s$ and $\gamma_r>\gamma_s>\gamma_k$. Then we must
have $k=i$ for some $[i,j]$-move of the procedure, where $s\leq j$. We
conclude that the number of such $k$ is at most $\gamma_r-\gamma_s$.

Conversely, suppose that $\om\in S_n$ is a vexillary permutation with
code $\gamma$. We call an integer $i\geq 1$ an {\em initial index} if
there exists an $s>i$ with $\gamma_i<\gamma_s$. 
We claim that there is a canonical $312$-avoiding permutation $\ome$
such that $\ell(\om\ome)= \ell(\om)+\ell(\ome)$, $\om\ome$ is
dominant, and $\ome_a=a$ if $a<i$ for every initial index $i$. This
will complete the proof of the theorem, by applying Lemma \ref{312avoid}.

To establish the claim, we argue by descending induction on the length of
$\om$. Observe that $\om$ has no initial index if and only if $\om$ is
a dominant permutation.  Hence, if $\om$ is already dominant, then we
must take $\ome$ to be the identity.  

Assume that $\om$ is not dominant.  We say that the index $j$ is
{\em associated} to the initial index $i$ of $\om$ if $j$ is the
maximum $s$ such that $\gamma_i<\gamma_s$.  Let $i$ be the smallest
initial index, let $j$ be associated to $i$, and set $\om':=\om
s_i\cdots s_{j-1}$. The vexillary condition (i) and Lemma \ref{Mcdlemma} imply that
\[
\gamma(\om') = (\gamma_i,\ldots, \gamma_{i-1}, \gamma_{i+1}+1,\ldots,
\gamma_j+1,\gamma_i,\gamma_{j+1},\ldots)
\]
and $\ell(\om')=\ell(\om)+j-i>\ell(\om)$.  It follows by checking
conditions (i) and (ii) that $\om'$ is vexillary and that every
initial index $i'$ of $\om'$ satisfies $i'\geq i$.

By the inductive hypothesis, there exists a canonical $312$-avoiding 
permutation $\ome'$ with $\ell(\om'\ome')=\ell(\om')+\ell(\ome')$, $\om'\ome'$ 
dominant, and $\ome'_a=a$ if $a<i'$ for every initial index $i'$ of $\om'$.
The claim is proved with $\ome:=s_i\cdots s_{j-1}\ome'$, once we check that 
$\ome$ is $312$-avoiding. Indeed, since $\ome'_a=a$ for all $a<i$ and
$\ell(s_i\cdots s_{j-1}\ome')=j-i+\ell(\ome')$, we must have
\[
\ome'=(1,2,\ldots,i-1,\ldots,a_1,\ldots,a_2, \ldots, a_{j-i},\ldots,j,\ldots)
\]
and 
\[
\ome = (1,2,\ldots,i-1,\ldots,a_1+1,\ldots,a_2+1, \ldots, a_{j-i}+1,\ldots,i,\ldots)
\]
where the set $\{a_1,\ldots,a_{j-i}\}$ is equal to $\{i,\ldots,
j-1\}$. As $\ome'$ is $312$-avoiding, there are no integers $a<b<c$
such that $\ome'_a>\ome'_c >\ome'_b$. It is easy to see from this and
the above relation between $\ome'$ and $\ome$ that the latter
permutation has the same property, and therefore is also
$312$-avoiding.
\end{proof}

\begin{remark}
(a) Define a {\em left modification} of $\om\in S_n$ to be a
permutation $\ome\om$, where $\ome\in S_n$ is $312$-avoiding and such
that $\ell(\ome\om)=\ell(\om)-\ell(\ome)$.  Then a permutation is
amenable if and only if it is a left modification of a dominant
permutation. This follows from Lemma \ref{312avoid}, Theorem
\ref{amenvexthm}, and the fact that $\om$ is dominant
(respectively vexillary) if and only if $\om^{-1}$ is dominant
(respectively vexillary).

\medskip
\noin (b) It is not hard to show that a definition of amenable
permutations as left modifications of leading permutations, in the
same manner as Definition \ref{amenC} in type C, results in the same
class of permutations as that given in Definition \ref{amenA}.
\end{remark}

Let $\om$ be a vexillary permutation with code $\gamma$ and shape
$\la$, and let $\ome$ be the canonical $312$-avoiding permutation associated 
to $\om$ in the proof of Theorem \ref{amenvexthm}.  Define a new sequence
$\wh{\gamma}$ by the prescription
\[
\wh{\gamma}_\al:=\gamma_\al+\#\{i\ |\ \text{$i$ is an initial 
index with associated index $j$ and $i<\al\leq j$}\}
\]
for each $\al\geq 1$. Let $\wh{\la}$ be the partition obtained by
listing the entries of $\wh{\gamma}$ in weakly decreasing order. 
Then $\wh{\la}$ is the shape of $\om \ome$.

Consider the skew Young diagram $\tau(\om):=\wh{\la}/\la$.  For each
$i\geq 1$, fill the boxes in row $i$ of $\tau(\om)$ with a strictly
decreasing sequence of consecutive positive integers ending in $i$. In
this way, we obtain a tableaux $T=T(\om)$ of shape $\tau(\om)$ with
strictly decreasing rows. Define the {\em depth} of a box $B$ of $T$
to be the distance from $B$ to the end of the row it occupies. Form a
reduced decomposition for a permutation $\ome_T$ by listing the
entries in the boxes of $T$ in decreasing order of depth, with the
entries of a fixed depth listed in {\em increasing} order. It then
follows from the definition of $\ome$ that $\ome_T=\ome$.

\begin{example}
Let $\om:=(1,3,6,7,9,4,8,2,5)$ be the vexillary permutation in $S_9$
with code $\gamma=(0,1,3,3,4,1,2,0,0)$. The initial indices are $1$,
$2$, $3$, $4$, and $6$ with associated indices $7$, $7$, $5$, $5$, and
$7$, respectively. The reduced decomposition for the canonical
permutation $\ome$ is
\[          
s_4s_3s_4s_6s_2s_3s_4s_5s_6s_1s_2s_3s_4s_5s_6
\]
and we have $\om \ome=(9,7,6,8,4,3,1,2,5)$, with code
$(8,6,5,5,3,2,0,0,0)$. We also have 
$\la=(4,3,3,2,1,1)$, $\wh{\gamma}=(0,2,5,6,8,3,5,0)$, and
$\wh{\la}=(8,6,5,5,3,2)$. The tableau $T(\om)$ on the skew diagram
$\tau(\om)$ is displayed in Figure \ref{skewfig}. 
\begin{figure}
\centering
\includegraphics[scale=0.30]{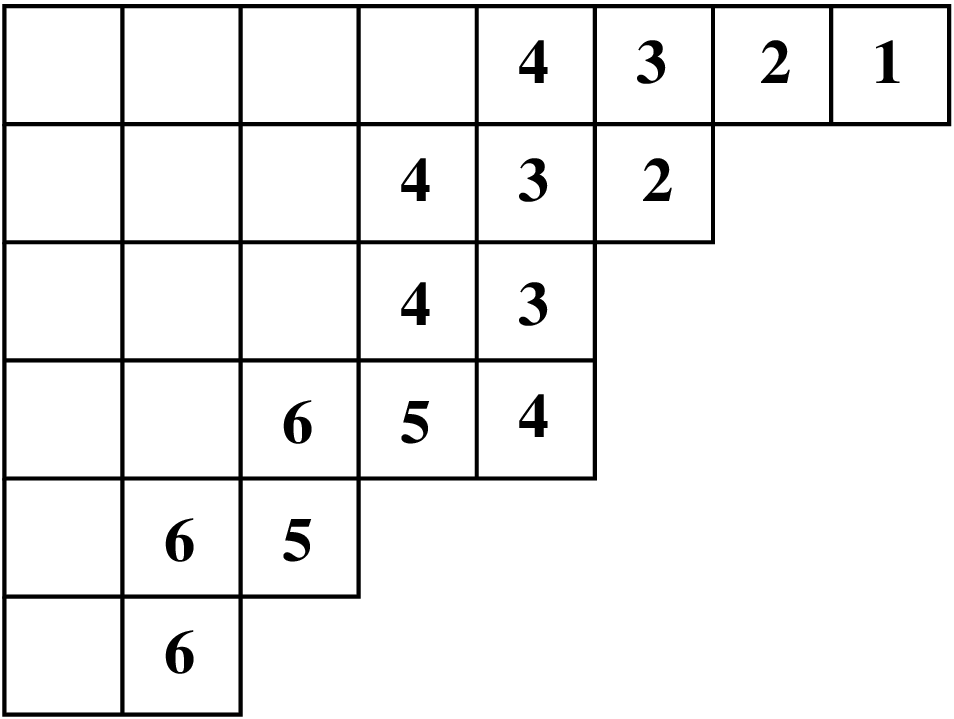}
\caption{The tableau $T$ on the skew diagram $\wh{\la}/\la$.}
\label{skewfig}
\end{figure}
\end{example}

It would be interesting to find analogues of the canonical permutation 
$\ome$ and the tableaux $T(\om)$ for the amenable elements in the 
other classical Lie types.

\subsection{Type A degeneracy loci}

Let $E\to \X$ be a vector bundle of rank $n$ on a complex algebraic variety
$\X$, assumed to be smooth for simplicity. Let $\om\in S_n$ be amenable 
of shape $\la$, and let $\f$ and $\g$ be the left and right flags of 
$\om$, respectively. Consider two complete flags of subbundles of $E$
\[
0 \subset E_1\subset \cdots \subset E_n=E \ \ \, \mathrm{and} \, \ \ 
0 \subset F_1\subset \cdots \subset F_n=E
\]
with $\rank E_r=\rank F_r=r$ for each $r$. Define
the {\em degeneracy locus} $\X_\om\subset \X$ as the locus of $x \in \X$
such that
\[
\dim(E_r(x)\cap F_s(x))\geq \#\,\{\,i \leq r \ |\ \om_i> n-s\,\}
\ \, \forall \, r,s.
\]
Assume further that $\X_\om$ has pure codimension $\ell(\om)$ in
$\X$. The next result, which follows from Theorem \ref{TamvexA} and
Fulton's work \cite{F1}, will be a formula for the cohomology class
$[\X_\om]$ in $\HH^{2\ell(\om)}(\X)$ in terms of the Chern classes of
the bundles $E_r$ and $F_s$. Recall that for any integer $p$, the
class $c_p(E-E_r-F_s)$ is defined by the equation
\[
c(E-E_r-F_s) := c(E)c(E_r)^{-1}c(F_s)^{-1}
\]
of total Chern classes.

\begin{thm}[\cite{F1}]
\label{dbleAloci}
For any amenable permutation $\om\in S_n$, we have
\begin{equation}
\label{AChern}
[\X_\om] = s_{\la'}(E-E_{\f}-F_{n-\g}) =
R^\emptyset\, c_{\la}(E-E_{\f}-F_{n-\g})
\end{equation}
in the cohomology ring $\HH^*(\X)$.
\end{thm}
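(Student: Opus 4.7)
The plan is to derive the formula directly from Theorem \ref{TamvexA} via Fulton's universal Schubert calculus in \cite{F1}. First, I would invoke Fulton's theorem stating that, under the purity assumption on $\X_\om$, the class $[\X_\om]\in\HH^{2\ell(\om)}(\X)$ is obtained by specializing the double Schubert polynomial $\AS_\om$: the $x_i$ are sent to the Chern roots of $E_i/E_{i-1}$ and the $y_i$ to the negatives of the Chern roots of $F_i/F_{i-1}$, up to the precise conventions of \cite{F1}. Combining this with Theorem \ref{TamvexA} immediately yields
\[
[\X_\om] = R^{\emptyset}\,{}^{\f}h_{\la}^{\g}
\]
evaluated at these Chern roots.

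The next step is to identify each factor ${}^{r}h_p^s = \sum_{i=0}^p h_i^r(X)\,e_{p-i}^s(-Y)$ with a Chern class of a virtual bundle. Using the total Chern class factorization $c(E-E_r-F_s)=c(E)\,c(E_r)^{-1}c(F_s)^{-1}$ together with the fact that $h_i(x_1,\ldots,x_r)$ and $e_i(-y_1,\ldots,-y_s)$ recover the appropriate coefficients of $c(E_r)^{-1}$ and $c(F_s)^{-1}$ under Fulton's substitution, one checks that ${}^{r}h_p^s$ specializes to $c_p(E-E_r-F_{n-s})$. Applying this term by term, the expression $R^{\emptyset}\,{}^{\f}h_{\la}^{\g}$ becomes $R^{\emptyset}\,c_\la(E-E_{\f}-F_{n-\g})$ under the $\Z$-linear interpretation map that sends the noncommutative monomial $\frakc_{\al}$ to $\prod_j c_{\al_j}(E-E_{\f_j}-F_{n-\g_j})$.

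The final step is to rewrite this in terms of the conjugate partition $\la'$ so as to match the stated form $s_{\la'}(E-E_{\f}-F_{n-\g})=R^{\emptyset}\,c_{\la'}(E-E_{\f}-F_{n-\g})$. The classical duality $\det(h_{\la_i-i+j})=\det(e_{\la'_i-i+j})=s_\la$ between the two Jacobi--Trudi presentations of a Schur function admits a flagged analogue that holds whenever the flags are constant on blocks of equal parts of $\la$; this constancy is exactly the property of $\f$ and $\g$ guaranteed by Proposition \ref{amenableA} and the definition of these flags. The alternating property of $R^{\emptyset}$ furnished by Lemma \ref{commuteA}(a) with $D=\emptyset$ supplies the relations needed to perform the swap. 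The principal obstacle is the bookkeeping in this last step: one must verify that the passage $\la\leftrightarrow\la'$ is compatible with the pairing of partition parts to flag indices and with Fulton's sign conventions throughout. Since all of the underlying identities are classical in spirit, no new ideas are required beyond a careful matching of conventions.
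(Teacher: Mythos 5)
Your proposal takes essentially the same route as the paper, which offers no detailed proof beyond pointing to Theorem~\ref{TamvexA} and Fulton's work~\cite{F1}: specialize $\AS_\om = R^\emptyset\,{}^\f h_\la^\g$ under Fulton's Chern-root substitution and identify each ${}^r h_p^s$ with a Chern class of a virtual bundle. You have correctly isolated the two places where care is needed: the translation of ${}^r h_p^s$ into $c_p(E-E_r-F_{n-s})$ under the sign conventions of~\cite{F1}, and the passage to the conjugate form $R^\emptyset\,c_{\la'}$; for the latter, your observation that the flags $\f$ and $\g$ are constant on blocks of equal parts of $\la$ (a direct consequence of their definition and Lemma~\ref{Mcdlemma}) is exactly the hypothesis under which the flagged Jacobi--Trudi duality applies, so the bookkeeping you flag is indeed the only substantive point left to verify.
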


The Chern polynomial in
(\ref{AChern}) is interpreted as the image of the Schur polynomial
$s_{\la'}(\mathfrak{c}) :=R^\emptyset \mathfrak{c}_{\la}$ under the
$\Z$-linear map which sends the noncommutative monomial
$\mathfrak{c}_\al$ to $\prod_j c_{\al_j}(E-E_{\f_j}-F_{n-\g_j})$, for
every integer sequence $\al$.

\begin{remark}
Theorem \ref{dbleAloci} and its companion Theorems \ref{dbleCloci},
\ref{dbleBloci}, and \ref{dbleDloci} in the other classical Lie types
are results about cohomology groups, taken with rational coefficients
in types B and D.  However, from these, one may obtain corresponding
results for cohomology with integer coefficients, and for the Chow
groups of algebraic cycles modulo rational equivalence. For the latter
transition, see \cite{F2, G}.
\end{remark}

\section{Amenable elements: Type C theory}
\label{Ctheory}

\subsection{Definitions and main theorem}
\label{dmtC}

Let $w$ be a signed permutation with A-code $\gamma$ and
shape $\la=\mu+\nu$, with $\ell=\ell(\la)$ and $m=\ell(\mu)$.
Choose $k\geq 0$, and assume that $w$ is increasing up to $k$. If 
$k=0$, this condition is vacuous, while if $k\geq 1$ it means that
$0<w_1<\cdots<w_k$. Eventually, $k$ will be the first right descent of
$w$, but the increased flexibility is useful.

List the entries $w_{k+1},\ldots,w_n$ in increasing order:
\[
u_1<\cdots < u_m < 0 < u_{m+1} < \cdots < u_{n-k}.
\]
Define a sequence $\beta(w)$ by
\[
\beta(w):=(u_1+1,\ldots,u_m+1,u_{m+1},\ldots,u_{n-k}).
\]
and the {\em denominator set} $D(w)$ by
\begin{equation}
\label{Dset}
D(w):=\{(i,j) \ |\ 1\leq i<j \leq n-k \ \, \text{and} \ \, 
 u_i+ u_j < 0 \}.
\end{equation}
This notation suppresses the dependence of $\beta(w)$ and $D(w)$ on
$k$. Observe that the inequality $u_i+u_j<0$ in (\ref{Dset}) is
equivalent to $\beta_i(w)+ \beta_j(w) \leq 0$.

\begin{defn}
\label{Ctrunc}
Suppose that $w\in W_n$ has code $\gamma=\gamma(w)$ and $k\geq 0$.
The {\em $k$-truncated A-code} ${}^k\gamma={}^k\gamma(w)$ is defined
by
\[
{}^k\gamma(w):=(\gamma_{k+1}, \gamma_{k+2},\ldots, \gamma_n).
\]
If $k$ is the first right descent of $w$, then we call ${}^k\gamma(w)$
the {\em truncated A-code} of $w$. We let $\xi=\xi(w)$ be the
conjugate of the partition whose parts are the non-zero entries of
${}^k\gamma(w)$ arranged in weakly decreasing order.
\end{defn}

Clearly an element $w\in W_n$ increasing up to $k$ with a given
$k$-truncated A-code $C$ is uniquely determined by the {\em set} of
elements $\{w_{k+1},\ldots,w_n\}$, or equivalently, by the sequence
$\beta(w)$.

Let $v(w)$ be the unique $k$-Grassmannian element obtained by
reordering the entries $w_{k+1},\ldots,w_n$ to be increasing.  For
example, if $w:=(2,4,7,5,8,\ov{3},1,\ov{6})$ and $k:=3$, then
$v(w)=(2,4,7,\ov{6},\ov{3},1,5,8)$.  Note that the map $w \mapsto
v(w)$ is a bijection from the set of elements in $W_n$ increasing up
to $k$ with $k$-truncated A-code $C$ onto the set of $k$-Grassmannian
elements in $W_n$, such that $\beta(v(w))=\beta(w)$ and
\[
\ell(v(w)) = \ell(w)-\sum_{i=k+1}^n C_i = \ell(w)-\sum_{j=1}^{n-k} \gamma_{k+j}.
\]
In particular, if $w,\ov{w}$ are two such elements, then 
$\ell(w)>\ell(\ov{w})$ if and only if $\ell(v(w))>\ell(v(\ov{w}))$.

\begin{lemma}
Let $w$ and $\ov{w}$ be elements in $W_n$ increasing up to $k$
and with the same $k$-truncated A-code $C$, 
such that $\ell(w)=\ell(\ov{w})+1$. Suppose that 
$v(\ov{w})=s_iv(w)$ for some simple reflection $s_i$. 
Then $\ov{w}=s_iw$.
\end{lemma}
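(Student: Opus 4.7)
The plan is to invoke a uniqueness principle implicit in the preceding paragraphs: since an element $w' \in W_n$ increasing up to $k$ is uniquely determined by the pair consisting of its signed suffix set $S(w') := \{w'_{k+1},\ldots,w'_n\}$ and its truncated A-code, to conclude $\ov w = s_i w$ it suffices to verify that (a) $s_i w$ is increasing up to $k$; (b) $s_i w$ has truncated A-code $C$; and (c) the signed suffix set of $s_i w$ equals $S(\ov w)$.

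Statement (c) is immediate from $v(\ov w) = s_i v(w)$: comparing the suffix value sets on both sides gives $S(\ov w) = s_i\cdot S(w)$, and this is manifestly the signed suffix set of $s_i w$ as well. For (a) and (b), I would classify the possible shape of $v(w)$ using the description of left descents recalled just before Lemma \ref{Mcdlemma}, together with the $k$-Grassmannian property of both $v(w)$ and $s_i v(w) = v(\ov w)$. For $i \geq 1$, one checks that exactly three configurations can occur, corresponding to the first three of the four forms recalled there (the fourth being incompatible with the sorted Grassmannian suffix): $(A)$ $i+1$ lies in the prefix and $i$ in the suffix of $v(w)$; $(B)$ $i$ lies in the prefix and $-(i+1)$ in the suffix; $(C)$ both $-(i+1)$ and $i$ lie in the suffix. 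For $i=0$, the only possibility is that $-1$ lies in the suffix. In each of these configurations, each of the pairs $\{i,i+1\}$ and $\{-i,-(i+1)\}$ (or the pair $\{1,-1\}$ when $i=0$) contains at most one element of $S(w)$ and at most one element of the prefix $(w_1,\ldots,w_k)$.

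Given this case analysis, statement (a) follows because $s_i$ either leaves the prefix fixed, or replaces a single positive prefix entry by its neighboring integer, which by the analysis is absent from the prefix and fits correctly in the sorted gap where it lands; so the prefix of $s_i w$ remains strictly increasing and positive. Statement (b) also follows: $s_i$ alters at most one value in $S(w)$, and the integer that replaces it is not present elsewhere in $S(w)$, so every strict inequality $w_s < w_r$ with $r,s > k$ is preserved, giving $\gamma_r(s_i w) = \gamma_r(w) = C_r$ for every $r > k$. The main obstacle in executing this plan is nothing conceptual but simply the clean organization of the case analysis for configurations $(A)$, $(B)$, $(C)$ and the case $i = 0$; once those cases are separated, each verification of the prefix and A-code conditions reduces to a short direct inspection.
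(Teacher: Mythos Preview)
Your approach is essentially the paper's: both carry out the same case analysis on the possible forms of a left descent of the $k$-Grassmannian element $v(w)$, ruling out the form $(\cdots\,\ov{i}\,\cdots\,\ov{i+1}\,\cdots)$, and the paper's cases (a), (b), (c), (d) correspond exactly to your $i=0$ case and configurations $(B)$, $(C)$, $(A)$. The paper dispatches each case tersely, while you route through verifying the three conditions (a)--(c); the content is the same.

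One slip to fix: in your configuration $(C)$, where both $-(i+1)$ and $i$ lie in the suffix, the left action of $s_i$ alters \emph{two} values of $S(w)$, not one (namely $-(i+1)\mapsto -i$ and $i\mapsto i+1$). This does not break the argument --- each replacement is by an adjacent integer that, by your own pair analysis, is absent from $S(w)$, and the two changes (one negative, one positive) do not interfere with each other or with the remaining comparisons --- but the sentence ``$s_i$ alters at most one value in $S(w)$'' is false as stated and should be amended.
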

\begin{proof} 
There are 4 possible cases for $i$ and $v(w)$: 
(a) $i=0$ and $v(w) = (\cdots \ov{1} \cdots)$; 
(b) $i \geq 1$ and $v(w) = (\cdots i \cdots \ov{i+1} \cdots)$; 
(c) $i \geq 1$ and $v(w) = (\cdots \ov{i+1} \cdots i \cdots)$;
(d) $i \geq 1$ and $v(w) = (\cdots i+1 \cdots i \cdots)$. In the 
first three cases, the result is clear. In case (d), the
$i+1$ must be among the first $k$ entries of $v(w)$, which
coincide with the first $k$ entries of $w$, while $i$ lies among
the last $n-k$ entries of $w$. Hence the the result follows.
\end{proof}

For any three integer vectors $\al,\be,\rho\in \Z^\ell$, define
${}^{\rho}c^\be_\al:={}^{\rho_1}c^{\be_1}_{\al_1}\,{}^{\rho_2}c^{\be_2}_{\al_2}\cdots$.
Given any raising operator $R=\prod_{i<j}R_{ij}^{n_{ij}}$, let $R\,
{}^{\rho}c^\be_{\al} := {}^{\rho}c^\be_{R\al}$.

\begin{prop}
\label{gampropN}
Fix an integer $k\geq 0$. Suppose that $w$ and $\ov{w}$ are elements
in $W_n$ increasing up to $k$ with the same $k$-truncated A-code $C$,
such that $\ell(w)=\ell(\ov{w})+1$ and $s_iv(w)=v(\ov{w})$ for some
simple reflection $s_i$. Assume that we have
\[
\CS_w = R^{D(w)}\,{}^{\kk}c^{\beta(w)}_{\la(w)} 
\]
for some integer sequence $\kk$. Then we have 
\[
\CS_{\ov{w}} = R^{D(\ov{w})}\,{}^{\kk}c^{\beta(\ov{w})}_{\la(\ov{w})}.
\]
\end{prop}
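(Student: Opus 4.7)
The plan is to apply the recursive characterization of the double Schubert polynomial. The preceding lemma gives $\ov{w}=s_iw$, and since $\ell(\ov{w})=\ell(w)-1$, the defining equation (\ref{ddCeqinitN}) yields $\CS_{\ov{w}}=\partial^y_i\CS_w$; so the task reduces to computing $\partial^y_i\bigl(R^{D(w)}\,{}^{\kk}c^{\beta(w)}_{\la(w)}\bigr)$ and identifying the result with the stated expression. The argument would then proceed by a case analysis on the four possibilities for $(i,v(w))$ spelled out in the preceding lemma.

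In each case I would first locate the positions $j\in[1,n-k]$ with $|\beta_j(w)|=i$, read directly off the sorted list $u_1<\cdots<u_{n-k}$. Cases (a), (b), (d) yield a unique such index $j$, with $\beta_j(w)$ equal to $0$, $-i$, $i$ respectively. Applying the Leibnitz rule (\ref{LeibRN}) iteratively to the product of factors in ${}^{\kk}c^{\beta(w)}_{\la(w)}$, and using Lemma \ref{ddlemN}(a) to annihilate factors with $|\beta_l|\neq i$, leaves a single surviving term that realizes the substitution $\la_j\mapsto\la_j-1$ and $\beta_j\mapsto\beta_j+1$ at position $j$. In case (c), two indices $p<q$ arise, with $\beta_p(w)=-i$ (coming from $u_p=\ov{i+1}$) and $\beta_q(w)=i$ (coming from $u_q=i$); the two surviving Leibnitz summands combine via Lemma \ref{ddlemN}(b) into the raising-operator expression $(1+R_{p,q})\,{}^{(\kk_p,\kk_q)}c^{(-i+1,\,i+1)}_{(\la_p-1,\,\la_q)}$, while all other factors remain untouched.

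It then remains to identify the output with $R^{D(\ov{w})}\,{}^{\kk}c^{\beta(\ov{w})}_{\la(\ov{w})}$. I would check directly from the definitions that $\beta(\ov{w})$ differs from $\beta(w)$ by $+1$ at the distinguished position(s). For the shape: in cases (a), (b), (c) only $\mu$ changes---the part coming from $u_p$ drops by one---while $\nu$ is unaffected, since the relative order of the entries of $w$ and $\ov{w}$ coincides (the values $\ov{i}$ and $i+1$ produced by $s_i$ are not present in $w$ to begin with); in case (d), $\mu$ is unchanged and a direct count shows that $\gamma_j$ drops by exactly one, where $j\leq k$ is the position of $i+1$ in $w$, so that $\nu_q$ drops by one with $q$ the sorted index of $i$. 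For the denominator set: in cases (a), (b), (d) no pair of $D$ is affected, since the unique index with $|\beta_l(w)|=i$ is $j$ and no complementary value is attained elsewhere; in case (c), $\beta_p(w)+\beta_q(w)=0$ places $(p,q)\in D(w)$, and the $(1+R_{p,q})$ factor produced by Lemma \ref{ddlemN}(b) cancels the $(1+R_{p,q})^{-1}$ in $R^{D(w)}$, yielding exactly $R^{D(\ov{w})}=R^{D(w)\smallsetminus\{(p,q)\}}$.

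The main obstacle is the bookkeeping in case (c): verifying simultaneously that no further pair of $D$ is affected, that the shape $\la$ changes only in position $p$, and that the $(1+R_{p,q})$ factor cleanly converts $R^{D(w)}$ into $R^{D(\ov{w})}$. All three reductions rest on the observation that, because $v(w)$ is a signed permutation, the absolute values $i$ and $i+1$ each occur at most once among the entries of $w$, so that $p$ and $q$ are the only indices with $|\beta_l(w)|=i$.
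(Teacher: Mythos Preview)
Your proposal is correct and follows essentially the same route as the paper: reduce to computing $\partial^y_i$ of the raising-operator expression via the Leibnitz rule and Lemma~\ref{ddlemN}, split into the four cases (a)--(d) for $v(w)$, and track the changes to $\beta$, $\la$, and $D$ in each. The one place where the paper supplies more than you do is case~(d): the assertion that $\nu$ drops precisely at the sorted index of $i$ amounts to the identity $\gamma_r(w)=q$ (where $w_r=i+1$ and $\beta_q=i$), which the paper verifies explicitly using that $w_j\geq w_r$ for $j\in[r,k]$ and that $\beta$ is strictly increasing; your ``direct count'' should be fleshed out to include this.
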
 
\begin{proof}
Set $F_w:=R^{D(w)} \, {}^{\kk}c^{\beta(w)}_{\la(w)}$, so we know that
$\CS_w=F_w$. As equation (\ref{ddCeqinitN}) gives $\partial^y_i\CS_w =
\CS_{\ov{w}}$, it will suffice to show that $\partial^y_i F_w =
F_{\ov{w}}$.  The proof of this will follow the argument of
\cite[Prop.\ 5]{TW}.

Let $\mu:=\mu(w)$, $\nu:=\nu(w)$, $\la:=\la(w)=\mu+\nu$, 
$\ov{\mu}:=\mu(\ov{w})$, $\ov{\nu}:=\nu(\ov{w})$,
$\ov{\la}:=\la(\ov{w})= \ov{\mu}+\ov{\nu}$, 
$\beta=\beta(w)$, and $\ov{\beta}=\beta(\ov{w})$. 
There are 4 possible cases for $w$, discussed below. In each case, we have
$\ov{\la}\subset\la$, so that $\ov{\la}_p=\la_p-1$ for some $p\geq 1$ and
$\ov{\la}_j=\la_j$ for all $j\neq p$.

\medskip
\noin (a) $v(w) = (\cdots \ov{1} \cdots)$ with $i=0$. In this case we have 
$D(w)=D(\ov{w})$. Since clearly $\nu=\ov{\nu}$ and $\ov{\mu}_p=\mu_p-1$
for $p=\ell(\mu)$, while $\ov{\mu}_j=\mu_j$ for all $j\neq p$, it follows that
$\beta_p=i$, $\ov{\beta}_p= i + 1$, while $\beta_j=\ov{\beta}_j$ for all $j \neq p$.

\medskip
\noin (b) $v(w) = (\cdots i \cdots \ov{i+1} \cdots)$. In this case
$D(w)=D(\ov{w})$, and we have $\nu=\ov{\nu}$ and $\ov{\mu}_p=\mu_p-1$,
while $\ov{\mu}_j=\mu_j$ for all $j\neq p$. It follows that 
$\beta_p = -i$, $\ov{\beta}_p=-i+1$, and $\beta_j=\ov{\beta}_j$ for all $j \neq p$.

\medskip
\noin (c) $v(w) = (\cdots \ov{i+1} \cdots i \cdots)$. In this case
$D(w)=D(\ov{w})\cup\{(p,q)\}$, where $\beta_p=-i$ and $\beta_q=i$. We see
similarly that $\ov{\beta}_p=-i+1$ and $\ov{\beta}_q=i+1$, while
$\beta_j=\ov{\beta}_j$ for all $j \notin \{p,q\}$.

\medskip
\noin
(d) $v(w) = (\cdots i+1 \cdots i \cdots)$. In this case we have 
$D(w)=D(\ov{w})$ while clearly $\mu=\ov{\mu}$. We deduce that
$\ov{\nu}_p=\nu_p-1$, while $\ov{\nu}_j=\nu_j$ for all $j\neq p$.
We must show that $\beta_p=i$, and hence $\ov{\beta}_p=i+1$, 
and $\beta_j=\ov{\beta}_j$ for all $j\neq p$.

Note that if $w_r=i+1$, then $r\in [1,k]$. Since $w_j\geq w_r$ for all
$j\in [r,k]$, and the sequence $\beta(w)$ is strictly increasing, we
deduce that $\beta_g=i$ exactly when $g=\gamma_r(w)$.  We have
$\gamma_r(\ov{w})=\gamma_r(w)-1=g-1$, while $\gamma_j(w)=\gamma_j(\ov{w})$
for $j\neq r$. It follows that $\ov{\nu}_g=\nu_g-1$, while
$\ov{\nu}_j=\nu_j$ for all $j\neq g$. In other words, $g=p$, as
desired.

\medskip
To simplify the notation, set $c^{\rho}_{\al}:={}^{\kk}c^{\rho}_\al$, for
any integer sequences $\al$ and $\rho$.  In cases (a), (b), or (d), it
follows using the left Leibnitz rule and Lemma \ref{ddlemN}(a) that
for any integer sequence $\al=(\al_1,\ldots,\al_\ell)$, we have
\begin{align*}
\partial^y_i c^{\be}_\al &=
c^{(\be_1,\ldots,\be_{p-1})}_{(\al_1,\ldots,\al_{p-1})}
\left(\partial^y_i(c^{\be_p}_{\al_p})\,c^{(\be_{p+1},\ldots,\be_\ell)}_{(\al_{p+1},\ldots,\al_\ell)}
+s^y_i(c^{\be_p}_{\al_p})\,\partial^y_i
(c^{(\be_{p+1},\ldots,\be_\ell)}_{(\al_{p+1},\ldots,\al_\ell)})\right)
\\ &=c^{(\be_1,\ldots,\be_{p-1})}_{(\al_1,\ldots,\al_{p-1})}
\left(c^{\be_p+1}_{\al_p-1}\,c^{(\be_{p+1},\ldots,\be_\ell)}_{(\al_{p+1},\ldots,\al_\ell)}
+s^y_i(c^{\be_p}_{\al_p})\cdot 0\right)
=c^{(\be_1,\ldots,\be_p+1,\ldots,\be_\ell)}_{(\al_1,\ldots,\al_p-1,\ldots,\al_\ell)}
= c^{\ov{\be}}_{\al-\epsilon_p}.
\end{align*}
Since $\la-\epsilon_p=\ov{\la}$,
we deduce that if $R$ is any raising operator, then
\[
\partial^y_i (R \,c^{\be}_{\la}) = \partial^y_i (c^{\be}_{R\la}) = 
c^{\ov{\be}}_{R\la-\epsilon_p} = R \,c^{\ov{\be}}_{\ov{\la}}.
\]
As $R^{D(w)}= R^{D(\ov{w})}$, we conclude that 
\[
\partial^y_i(F_w) = \partial^y_i (R^{D(w)} c^{\be}_{\la})   =
R^{D(\ov{w})} c^{\ov{\be}}_{\ov{\la}} = F_{\ov{w}}.
\]

In case (c), it follows from the left Leibnitz rule as in the proof of 
Lemma \ref{ddlemN}(b) that for any integer sequence
$\al=(\al_1,\ldots,\al_\ell)$, we have
\begin{align*}
\partial^y_i c^{\be}_\al &= \partial^y_i
c^{(\be_1,\ldots,-i,\ldots,i,\ldots,\be_{\ell})}_{(\al_1,\ldots,\al_p,\ldots,\al_q,
  \ldots,\al_\ell)} \\ &=
c^{(\be_1,\ldots,-i+1,\ldots,i+1,\ldots,\be_{\ell})}_{(\al_1,\ldots,\al_p-1,\ldots,\al_q,\ldots,\al_\ell)}
+c^{(\be_1,\ldots,-i+1,\ldots,i+1,\ldots,\be_{\ell})}_{(\al_1,\ldots,\al_p,\ldots,\al_q-1,\ldots,\al_\ell)}
= c^{\ov{\be}}_{\al-\epsilon_p} + c^{\ov{\be}}_{\al-\epsilon_q}.
\end{align*}
Since $\la-\epsilon_p=\ov{\la}$, we deduce that if $R$ is any raising
operator, then
\[
\partial^y_i (R \,c^{\be}_\la) = \partial^y_i (c^{\be}_{R\la}) = 
c^{\ov{\be}}_{R\la-\epsilon_p} + c^{\ov{\be}}_{R\la-\epsilon_q} = 
R \,c^{\ov{\be}}_{\ov{\la}} + RR_{pq}\,c^{\ov{\be}}_{\ov{\la}}.
\]
As $R^{D(w)}+R^{D(w)} R_{pq} = R^{D(\ov{w})}$, we conclude that
\[
\partial^y_i(F_w) = 
\partial^y_i(R^{D(w)} c^{\be}_\la) = 
R^{D(w)} c^{\ov{\be}}_{\ov{\la}} +R^{D(w)} R_{pq}c^{\ov{\be}}_{\ov{\la}} = 
R^{D(\ov{w})} c^{\ov{\be}}_{\ov{\la}} = F_{\ov{w}}.
\]
\end{proof}

\begin{prop}
\label{leadingN}
Suppose that $w\in W_n$ is such that $\gamma(w)$ is a partition. Then we have
\begin{equation}
\label{domC}
\CS_w = R^{D(w)} \, {}^{\nu(w)}c^{\beta(w)}_{\la(w)}.
\end{equation}
\end{prop}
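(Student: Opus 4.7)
The plan is to prove (\ref{domC}) by descending induction on $\ell(w)$ within the subclass of $W_n$ consisting of elements $w$ with $\gamma(w)$ a partition, using Proposition \ref{gampropN} as the inductive engine.

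The base case would be the longest element $w_0=(\bar 1,\bar 2,\ldots,\bar n)\in W_n$, whose A-code is the staircase partition $(n-1,n-2,\ldots,1,0)$. A direct computation gives $\mu(w_0)=(n,n-1,\ldots,1)$, $\nu(w_0)=(n-1,\ldots,1,0)$, $\la(w_0)=(2n-1,2n-3,\ldots,1)$, first right descent $k=0$, $\beta(w_0)=(1-n,2-n,\ldots,-1,0)$, and $D(w_0)=\{(i,j)\mid 1\le i<j\le n\}$. I would verify that the right-hand side of (\ref{domC}) for $w_0$, after rewriting the entries ${}^{\nu_j(w_0)}c^{\beta_j(w_0)}_{\la_j(w_0)}$ via their generating-function definition and the relations (\ref{fundrelsC}), coincides with the known top-degree closed formula for $\CS_{w_0}$ recorded in \cite{IMN}.

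For the inductive step, given $w\ne w_0$ with $\gamma(w)$ a partition, I would exhibit a simple reflection $s_i$ with $i\ge 0$ such that $w^+:=s_iw$ also has $\gamma(w^+)$ a partition, $\ell(w^+)=\ell(w)+1$, and the pair $(w^+,w)$ meets the hypotheses of Proposition \ref{gampropN}. By Lemma \ref{Mcdlemma}, the partition property of $\gamma(w)$ forces $w$ to be strictly increasing up to its first right descent $k$, so the Grassmannian elements $v(w)$ and $v(w^+)$ differ by a single simple reflection. Selecting $\kk=\nu(w)$ and invoking Proposition \ref{gampropN} transports the inductive formula $\CS_{w^+}=R^{D(w^+)}\,{}^{\kk}c^{\beta(w^+)}_{\la(w^+)}$ to the claimed expression for $\CS_w$.

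The principal obstacle will be the combinatorial construction of the descent chain: for each $w\ne w_0$ one must locate an appropriate ascent $w^+$ so that the transition $w^+ \to w$ falls into one of the cases (a)--(c) of the proof of Proposition \ref{gampropN}, in which the superscript $\nu$ is preserved. When a case (d) transition is unavoidable (it shifts $\nu$), compatibility of the formulas with superscript $\nu(w^+)$ and with superscript $\nu(w)$ would need to be reconciled via Lemma \ref{commuteC} together with the type C alternating relations (\ref{fundrelsC}); alternatively, one would need to show that the descent from $w_0$ can always be routed so as to avoid case (d) after the first invocation that fixes $\nu$ at the target value $\nu(w)$.
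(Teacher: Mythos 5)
Your plan has a genuine gap in the inductive step. You propose a single descending induction on $\ell(w)$ over all $w$ with $\gamma(w)$ a partition, driven by Proposition~\ref{gampropN}. But Proposition~\ref{gampropN} at level $k$ requires $w^+$ and $w$ to have the \emph{same} $k$-truncated A-code. Since your base case $w_0=(\bar 1,\ldots,\bar n)$ has $w_1<0$, its first right descent is $k=0$, and the $0$-truncated A-code is the full A-code $\gamma$. So for the transported formula to carry the correct superscript $\kk=\nu(w)$, you need $\nu(w^+)=\nu(w)$, i.e.\ $\gamma(w^+)=\gamma(w)$ at every step of the chain. A chain from $w_0$ in which $\gamma$ never changes can therefore reach only elements with $\gamma=\delta_{n-1}$, not an arbitrary partition A-code. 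Said differently, once $w$ already has every entry negative, the set $\{w_1,\ldots,w_n\}$ and hence $\mu(w)$ is already maximal for that $\gamma$, and there is no ascent $w^+=s_iw$ with the same A-code left to use; these elements form dead ends for your proposed engine.

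The paper resolves exactly this difficulty by running a two-phase induction. In the first phase it stays among elements with all entries negative (so $\beta\equiv -\delta_{n-1}$ and $R^{D(w)}=R^{\infty}$ are constant) and uses \emph{right} divided differences $\partial^x_j$ together with Lemma~\ref{ddlemN}(a) to vary $\gamma$ (and hence $\nu$ and $\la$), descending from $w_0$ to the unique all-negative element $w(C)$ with the target A-code. Only in the second phase, with $\gamma$ now frozen, does it invoke the $k=0$ case of Proposition~\ref{gampropN} (left divided differences) to move the value set from $\{-1,\ldots,-n\}$ to that of the given $w$; here $\nu$ stays fixed because the A-code does, so $\kk=\nu(w)$ is legitimate throughout. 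Your proposal is missing the right-divided-difference phase entirely, and without it the induction cannot leave the $\gamma=\delta_{n-1}$ class.

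Two smaller remarks. First, your worry about case~(d) of Proposition~\ref{gampropN} shifting $\nu$ is moot here: case~(d) requires the value $i+1$ to sit in one of the first $k$ positions, so it simply cannot occur at $k=0$, and there is nothing to route around. Second, Lemma~\ref{commuteC} concerns the alternating behaviour of $R^D$ under the substitution $\s^j_p\mapsto\s^j_p+z\,\s^j_{p-1}$ and gives no mechanism for reconciling formulas carrying different superscripts $\nu(w^+)$ versus $\nu(w)$; invoking it for that purpose would not close the gap.
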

\begin{proof}
Assume first that $w_i<0$ for each $i$, and $\gamma(w)$ is a
partition. We claim that
\begin{equation}
\label{fneqt}
\CS_w = R^{\infty} \, {}^{\nu(w)}c^{-\delta_{n-1}}_{\la(w)}.
\end{equation}
The proof of (\ref{fneqt}) is by descending induction on
$\ell(w)$. One knows from \cite[Thm.\ 1.2]{IMN} and
\cite[Prop.\ 3.2]{T5} that (\ref{fneqt}) is true for the longest
element $w_0$ in $W_n$, since $\nu(w_0)=\delta_{n-1}$ and
$\la(w_0)=\delta_n+\delta_{n-1}$.

Suppose that $w\neq w_0$ is such that $\gamma(w)$ is a partition, and
the shape of $w$ equals $\delta_n+\nu$. Then $\nu\subset \delta_{n-1}$
and $\nu\neq \delta_{n-1}$. Let $r\geq 1$ be the largest integer such
that $\nu_i=n-i$ for $i\in [1,r]$, and let $j:=\nu_{r+1}+1\leq n-r-1$.
Then $ws_j$ is of length $\ell(w)+1$ and satisfies the same
conditions, $\nu(ws_j)=\nu(w)+\epsilon_{r+1}$, and
$\la(ws_j)=\la(w)+\epsilon_{r+1}$. Using Lemma \ref{ddlemN}(a), for
any integer sequence $\al=(\al_1,\ldots,\al_n)$, we have
\[
\partial^x_j\left({}^{\nu(ws_j)}c^{-\delta_{n-1}}_{\al}\right) = 
{}^{\nu(w)}c^{-\delta_{n-1}}_{\al-\epsilon_{r+1}}.
\]
We deduce that
\[
\CS_w = \partial^x_j(\CS_{ws_j}) = 
\partial^x_j\left(R^{\infty} \, {}^{\nu(ws_j)}c^{-\delta_{n-1}}_{\la(ws_j)}\right)=
R^{\infty} \, {}^{\nu(w)}c^{-\delta_{n-1}}_{\la(w)},
\]
proving the claim. Equation (\ref{domC}) now follows, by combining (\ref{fneqt}) 
with the $k=0$ case of Proposition \ref{gampropN}.
\end{proof}

\begin{cor}
\label{kleadingN}
Suppose that $w\in W_n$ is increasing up to $k$ and the $k$-truncated
A-code ${}^k\gamma$ is a partition. Let
$k^{n-k}+\xi(w)=(k+\xi_1,\ldots,k+\xi_{n-k})$.  Then we have
\begin{equation}
\label{directapp}
\CS_w = R^{D(w)} \, {}^{k^{n-k}+\xi(w)}c^{\beta(w)}_{\la(w)}.
\end{equation}
\end{cor}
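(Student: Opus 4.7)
The plan is to reduce Corollary \ref{kleadingN} to Proposition \ref{leadingN} by identifying a particular element $w^{\text{top}}$ in the same class as $w$ (that is, increasing up to $k$ with the same $k$-truncated A-code $C:={}^k\gamma(w)$) whose full A-code $\gamma(w^{\text{top}})$ is a partition, and then transporting the resulting formula along the class to $w$ via iterated applications of Proposition \ref{gampropN}.

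Concretely, I would take $w^{\text{top}}$ to be the unique signed permutation in $W_n$ with $w^{\text{top}}_i=i$ for $1\leq i\leq k$ and whose last $n-k$ entries form the unique arrangement of $\{-n,-(n-1),\ldots,-(k+1)\}$ realizing ${}^k\gamma(w^{\text{top}})=C$. Under the bijection $w\mapsto v(w)$, this lifts the longest $k$-Grassmannian element $v^{\text{top}}=(1,2,\ldots,k,-n,-(n-1),\ldots,-(k+1))$, so $w^{\text{top}}$ has maximum length in its class. A direct verification shows that $\gamma(w^{\text{top}})=((n-k)^k,C_1,\ldots,C_{n-k})$ is a partition (since $C_1\leq n-k-1$), that $\mu(w^{\text{top}})=(n,n-1,\ldots,k+1)$, and that, as a length-$(n-k)$ sequence, $\nu(w^{\text{top}})=k^{n-k}+\xi(w)$; in particular $\la(w^{\text{top}})$ is supported on positions $1,\ldots,n-k$.

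Since $\gamma(w^{\text{top}})$ is a partition, Proposition \ref{leadingN} applied to $w^{\text{top}}$ (with its implicit $k=0$) yields
\[
\CS_{w^{\text{top}}} = R^{D_0(w^{\text{top}})}\,{}^{\nu(w^{\text{top}})}c^{\beta_0(w^{\text{top}})}_{\la(w^{\text{top}})},
\]
where the subscript $0$ indicates the values taken over all $n$ entries. Because $\la(w^{\text{top}})_j=0$ for $j>n-k$ and ${}^rc^s_{-1}=0$, only raising operators $R_{ij}$ and factors with indices in $[1,n-k]$ contribute nontrivially. On this range, $\beta_0(w^{\text{top}})$ agrees with $\beta(w^{\text{top}})$ at the actual $k$, and the restriction of $D_0(w^{\text{top}})$ to $\{(i,j):i<j\leq n-k\}$ coincides with $D(w^{\text{top}})$ at the actual $k$. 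Thus the displayed formula collapses to
\[
\CS_{w^{\text{top}}} = R^{D(w^{\text{top}})}\,{}^{k^{n-k}+\xi(w)}c^{\beta(w^{\text{top}})}_{\la(w^{\text{top}})},
\]
which is Corollary \ref{kleadingN} for $w^{\text{top}}$.

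To descend from $w^{\text{top}}$ to $w$, I would use the fact that the $k$-Grassmannian elements of $W_n$ form an interval in the left weak Bruhat order with maximum $v^{\text{top}}$. This gives a chain $v^{\text{top}}=v_0,v_1,\ldots,v_r=v(w)$ of $k$-Grassmannian elements with $v_i=s_{j_i}v_{i-1}$ and $\ell(v_i)=\ell(v_{i-1})-1$. Lifting via $w\mapsto v(w)$ produces a chain $w^{\text{top}}=w_0,w_1,\ldots,w_r=w$ within the class of $w$ to which Proposition \ref{gampropN} applies successively, with the sequence $\kk=k^{n-k}+\xi(w)$ preserved at every step. Iterating propagates the Corollary's formula along the chain, yielding the claimed identity for $w$. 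The main obstacle is the reconciliation between Proposition \ref{leadingN}'s length-$n$ form (at $k_0=0$) and Corollary \ref{kleadingN}'s length-$(n-k)$ form (at the actual $k$), with the chain-existence being a standard feature of coset combinatorics.
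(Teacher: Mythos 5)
Your proposal is correct and follows essentially the same two-step route as the paper: verify the formula for the top element $w^{\text{top}}=(1,\ldots,k,w_{k+1},\ldots,w_n)$ with all $w_{k+j}<0$ via Proposition \ref{leadingN}, then descend through the class via Proposition \ref{gampropN} using that the $k$-Grassmannian elements form an ideal in left weak order. Your expansion of what the paper calls a ``direct application'' — checking that $\nu(w^{\text{top}})=k^{n-k}+\xi(w)$, that $\la(w^{\text{top}})$ is supported on $[1,n-k]$ so that only raising operators with indices $\le n-k$ survive, and that on this range $\beta$ and $D$ at $k=0$ agree with those at the actual $k$ — is a useful (and correct) elaboration of the same argument rather than a different one.
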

\begin{proof}
If $w=(1,\ldots,k,w_{k+1},\ldots,w_n)$ with $w_{k+j}<0$ for all $j\in
[1,n-k]$, then $\gamma(w)=k^{n-k}+\xi(w)$ is a partition, and
(\ref{directapp}) is a direct application of Proposition
\ref{leadingN}. In this case, $v(w)=(1,\ldots,k,-n,\ldots,-k-1)$ is
the longest $k$-Grassmannian element in $W_n$. The general result now
follows from Proposition \ref{gampropN}, as the $k$-Grassmannian
elements of $W_n$ form an ideal for the left weak Bruhat order
(see for example \cite[Prop.\ 2.5]{Ste}).
\end{proof}

\begin{defn}
\label{modifdefn}
Let $k\geq 0$ denote the first right descent of $w\in W_n$. 
List the entries $w_{k+1},\ldots,w_n$ in increasing order:
\[
u_1<\cdots < u_m < 0 < u_{m+1} < \cdots < u_{n-k}.
\]
We say that a simple transposition $s_i$ for $i\geq 1$ is {\em
  $w$-negative} (respectively, {\em $w$-positive}) if $\{i,i+1\}$ is a
subset of $\{-u_1,\ldots,-u_m\}$ (respectively, of
$\{u_{m+1}\ldots,u_{n-k}\}$). Let $\sigma^-$ (respectively,
$\sigma^+$) be the longest subword of $s_{n-1}\cdots s_1$
(respectively, of $s_1\cdots s_{n-1}$) consisting of $w$-negative
(respectively, $w$-positive) simple transpositions.  A {\em
  modification} of $w\in W_n$ is an element $\ome w$, where $\ome\in
S_n$ is such that $\ell(\ome w)=\ell(w)-\ell(\ome)$, and $\ome$ has a
reduced decomposition of the form $R_1\cdots R_{n-1}$ where each $R_j$
is a (possibly empty) subword of $\sigma^-\sigma^+$ and all simple
reflections in $R_p$ are also contained in $R_{p+1}$, for each
$p<n-1$.  
\end{defn}

\begin{defn}
\label{amenC}
Suppose that $w\in W_n$ has first right descent at $k\geq 0$ and A-code
$\gamma$.  We say that $w$ is {\em leading} if $(\gamma_{k+1},
\gamma_{k+2},\ldots, \gamma_n)$ is a partition.  We say that $w$ is
{\em amenable} if $w$ is a modification of a leading element.
\end{defn}

\begin{remark}
(a) The integer vector $\al=(\al_1,\ldots,\al_p)$ 
is called {\em unimodal} if for some $j\in [1,p]$, we have
\[
\al_1\leq \al_2 \leq \cdots \leq \al_j\geq \al_{j+1} \geq \cdots \geq \al_p.
\]
The element $w\in W_n$ is leading if and only if the A-code of the
extended sequence $(0,w_1,w_2,\ldots,w_n)$, where we have set $w(0):=0$,
is unimodal.

\medskip
\noin (b) Given an element $w\in W_n$, there is an easy algorithm to
decide whether or not $w$ is amenable. One simply applies all possible
{\em inverse modifications} to $w$ and checks if any of these result
in a leading element.
\end{remark}

\begin{example}
Consider the leading element $w:=(2,4,6,5,\ov{1},\ov{3})$ in $W_6$,
with $k=3$, $\gamma(w)= (2,2,3,2,1,0)$, $\mu(w)=(3,1)$,
$\nu(w)=(5,4,1)$, $\xi(w)=(2,1)$, and $\la(w)=(8,5,1)$. We have
$\beta(w)=(-2,0,5)$ and $D(w)=\{(1,2)\}$, so Corollary 
\ref{kleadingN} gives
\[
\CS_w = R^{\{12\}} \, {}^{(5,4,3)}c_{(8,5,1)}^{(-2,0,5)} = 
\frac{1-R_{12}}{1+R_{12}}(1-R_{13})(1-R_{23})\, {}^{(5,4,3)}c_{(8,5,1)}^{(-2,0,5)}.
\]
\end{example}

In the following we will assume that $w$ has first right descent at
$k\geq 0$ and $\xi$ is as in Definition \ref{Ctrunc}.  Let
$\psi:=(\gamma_k,\ldots,\gamma_1)$, $\phi:=\psi'$, $\ell:=\ell(\la)$
and $m:=\ell(\mu)$. We then have
\begin{equation}
\label{pxmN}
\la = \phi+\xi+\mu
\end{equation}
and $\la_1>\cdots > \la_m > \la_{m+1}\geq \cdots \geq \la_{\ell}$.

\begin{defn}
Say that $\ci\in [1,\ell]$ is a {\em critical index} if $\beta_{\ci+1}>
\beta_\ci+1$, or if $\la_\ci>\la_{\ci+1}+1$ (respectively, $\la_\ci>\la_{\ci+1}$)
and $\ci<m$ (respectively, $\ci>m$). Define two
sequences $\f=\f(w)$ and $\g=\g(w)$ of length $\ell$ as follows.  For
$1\leq j \leq \ell$, set
\[
\f_j:=k+\max(i\ |\ \gamma_{k+i}\geq j)
\]
and let $$\g_j:=\f_\ci+\beta_\ci-\xi_\ci-k,$$ where $\ci$ is the least
critical index such that $\ci\geq j$. We call $\f$ the {\em right flag}
of $w$, and $\g$ the {\em left flag} of $w$.
\end{defn}

If $m\geq 1$, then $m$ is a critical
index, since $u_m$ is the largest negative entry of $w$.
We will show that for any amenable element $w$, $\f$ is a weakly
decreasing sequence consisting of right descents of $w$, and $\g$ is a
weakly increasing sequence whose absolute values consist of left
descents of $w$.

\begin{lemma}
\label{philem}
{\em (a)} If $\beta_{s+1} > \beta_s+1$, then $|\beta_s|$ is a left
descent of $w$.

\medskip
\noin
{\em (b)} If $s\leq m$, then $\phi_s=k$, while if $s>m$ and 
$\beta_{s+1} = \beta_s+1$, then $\phi_s=\phi_{s+1}$.
\end{lemma}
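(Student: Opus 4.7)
The plan is to attack each part of the lemma by a direct case analysis, making free use of the one-line descriptions of left and right descents of elements of $W_\infty$ recorded earlier in the preliminaries.

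For part (a) I would set $j := |\beta_s|$ and split on whether $s \leq m$ or $s > m$. When $s \leq m$ we have $\beta_s = -\mu_s + 1$, so the entry $u_s = \ov{\mu_s}$ sits in some position of $[k+1, n]$ and plays the role of $\ov{j+1}$ (with the degenerate case $\mu_s = 1$ giving $j = 0$ and an immediate left descent at $0$ from the presence of $\ov{1}$ in $w$). When $s > m$ we have $\beta_s = u_s = j$ directly, and $j$ itself appears at some position in $[k+1, n]$. In each regime the hypothesis $\beta_{s+1} > \beta_s + 1$ produces a gap of size at least two in the sorted list of negative (respectively positive) entries among $w_{k+1}, \ldots, w_n$, which I would use to rule out that $-j$ occurs as a negative entry, or that $j+1$ occurs as a positive entry in positions $k+1, \ldots, n$. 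Because the absolute values $\{|w_1|, \ldots, |w_n|\}$ exhaust $\{1, \ldots, n\}$, the missing integer with absolute value $j$ or $j+1$ must then appear either with the opposite sign in positions $k+1, \ldots, n$ or among the strictly increasing positive entries $w_1 < \cdots < w_k$. Reading off the relative order of the resulting occurrences of $j$ together with $j+1$ or $\ov{j+1}$ matches one of the four patterns characterising a left descent at $j$.

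For part (b) I would first observe that $\psi = (\gamma_k, \ldots, \gamma_1)$ is genuinely a partition: the inequalities $w_1 < \cdots < w_k$ force $\gamma_1 \leq \cdots \leq \gamma_k$, so after conjugation $\phi_s = \#\{j \in [1, k] : \gamma_j \geq s\}$. Consequently $\phi_s = k$ is equivalent to $\gamma_1 \geq s$, and $\phi_s = \phi_{s+1}$ is equivalent to there being no $j \in [1, k]$ with $\gamma_j = s$. The first assertion then follows because the $m$ negative entries $u_1, \ldots, u_m$ all lie in positions $k+1, \ldots, n$ and are strictly below the positive entry $w_1$, giving $\gamma_1 \geq m \geq s$. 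For the second, suppose such a $j \in [1, k]$ existed with $\gamma_j = s$; then exactly the $m$ negative entries together with the $s - m$ smallest positive entries $u_{m+1}, \ldots, u_s$ would lie below $w_j$ in positions $k+1, \ldots, n$, forcing $u_s < w_j < u_{s+1}$. But the hypothesis $\beta_{s+1} = \beta_s + 1$ is precisely $u_{s+1} = u_s + 1$, leaving no integer room for $w_j$.

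The main obstacle is the bookkeeping in part (a): ruling out the unwanted occurrences of $\pm j$ and $\pm(j+1)$ requires combining the gap hypothesis with the structural facts that the first $k$ entries of $w$ are positive and strictly increasing (as $k$ is the first right descent) and that the absolute values of a signed permutation exhaust $\{1, \ldots, n\}$. Once that is marshalled, part (b) reduces to a short counting argument.
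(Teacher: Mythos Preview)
Your proposal is correct and follows essentially the same approach as the paper's own proof: the same case split on $s\leq m$ versus $s>m$ in part (a), locating where $j$ and $\pm(j+1)$ must sit via the gap in the $\beta$-sequence and matching against the four left-descent patterns; and in part (b) the same counting argument showing $\gamma_j\geq m$ for $j\in[1,k]$, then deriving a contradiction from $u_s<w_j<u_{s+1}=u_s+1$. Your explicit remark that $\psi$ is a partition (so $\phi_s=\#\{j\in[1,k]:\gamma_j\geq s\}$) makes a step transparent that the paper leaves implicit.
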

\begin{proof}
Let $i:=|\beta_s|$, and suppose
that $1\leq s\leq m$. If $i=0$ then $u_s=-1$, so clearly $i$ is a left descent of $w$.
If $i\geq 1$ and $\beta_{s+1} > \beta_s+1$, then $i$ is
a left descent of $w$, since $w^{-1}(i)>0$ and $w^{-1}(i+1)<0$.  As
$w_j>0$ for all $j\in [1,k]$, we have $\psi_j\geq m$ for all $j\in
[1,k]$, and hence $\phi_s=k$.

Next suppose that $s>m$. If $\beta_{s+1} > \beta_s+1=i+1$, then we have
$w^{-1}(i+1)<0$ or $w_j=i+1$ for some $j\in [1,k]$. In either case, it
is clear that $i$ is a left descent of $w$. Finally, assume that
$\beta_{s+1} = \beta_s+1$. If $\phi_s>\phi_{s+1}$, there must exist
$j\in [1,k]$ such that $\gamma_j=s$, that is, $\#\{r>k\ |\ w_r<w_j\} =
s$. We deduce that 
\[
\{w_r\ |\ r>k \ \,\text{and}\, \ w_r<w_j\} =
\{u_1,\ldots, u_s\}, 
\]
which is a contradiction, since $u_s<w_j
\Rightarrow u_{s+1}=u_s+1<w_j$, for any $j\in [1,k]$. This 
completes the proof of (a) and (b).
\end{proof}

\begin{prop}
\label{amenableC}
Suppose that $\wh{w}\in W_n$ is leading with first right descent at
$k\geq 0$, let $\wh{\la}:=\la(\wh{w})$, and $\wh{\xi}:=\xi(\wh{w})$.
Let $w=\ome\wh{w}$ be a modification of $\wh{w}$, and set
$\gamma:=\gamma(w)$, $\la:=\la(w)$, $\beta:=\beta(w)$, and
$\xi:=\xi(w)$. Then the sequence $\beta+\wh{\la}-\la$ is weakly
increasing, and
\[
\CS_w = R^{D(w)} \, {}^{k^{n-k}+\wh{\xi}}c^{\beta(w)+\wh{\xi}-\xi}_{\la(w)} =
R^{D(w)} \, {}^{k^{n-k}+\wh{\xi}}c^{\beta+\wh{\la}-\la}_{\la}.
\]
Moreover, if $\ci\in [1,\ell]$ is a critical index of $w$, then
$k+\wh{\xi}_\ci$ is a right descent of $w$, the absolute value of
$\beta_\ci+\wh{\xi}_\ci-\xi_\ci$ is a left descent of $w$, and
$\wh{\xi}_\ci=\max(i\ |\ \gamma_{k+i}\geq \ci)$.
\end{prop}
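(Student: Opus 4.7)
The plan is to proceed by induction on $\ell(\ome)$, starting from Corollary \ref{kleadingN} (which handles the base case $\ome=1$) and iteratively applying left divided differences along a reduced decomposition of $\ome$. The first step is to establish that the outer data is preserved under modification: since every simple reflection in $\ome$ is either $\wh w$-positive or $\wh w$-negative, it acts on $\wh w$ by swapping two values lying within either $\{u_{m+1},\ldots,u_{n-k}\}$ or $\{|u_1|,\ldots,|u_m|\}$. These two sets, together with $\{\wh w_1,\ldots,\wh w_k\}$, partition $\{1,\ldots,n\}$ by the bijective nature of the signed permutation. Hence the first $k$ entries of $w$ coincide with those of $\wh w$, and the multiset $\{w_{k+1},\ldots,w_n\}$ equals $\{\wh w_{k+1},\ldots,\wh w_n\}$, yielding $\beta(w)=\beta(\wh w)$ and $D(w)=D(\wh w)$. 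Moreover $\mu(w)=\mu(\wh w)$ and $\gamma_i(w)=\gamma_i(\wh w)$ for all $i\leq k$, so $\phi(w)=\phi(\wh w)$, and therefore $\wh\la-\la=\wh\xi-\xi$, which reconciles the two displayed forms of the formula.

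For the inductive step, assume the formula holds for a modification $\ov w$ of $\wh w$ with $\ell(\ov w)=\ell(w)+1$ and $w=s_i\ov w$ (peeling off the leading reflection from the canonical reduced decomposition of $\ome$ produces such an $\ov w$). Apply $\partial^y_i$ to the inductive formula for $\CS_{\ov w}$, invoking equation $(\ref{ddCeqinitN})$, the left Leibnitz rule $(\ref{LeibRN})$, and Lemma \ref{ddlemN}(a). The key observation is that, because $s_i$ is $\wh w$-positive or $\wh w$-negative, exactly one factor in the product has upper-right exponent equal to $\pm i$: the position $p$ with $\beta_p=i$ (positive case) or $\beta_p=-i$ (negative case). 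The disjointness of positive and absolute-negative value sets rules out a second candidate (in contrast with case (c) of Proposition \ref{gampropN}), so only this single term survives the Leibnitz expansion. The net effect is that $\la_p$ decreases by one and the $p$-th upper-right exponent increases by one; since $R^{D(\wh w)}$ only shifts the lower indices, the formula $\CS_w=R^{D(w)}\,{}^{k^{n-k}+\wh\xi}c^{\beta+\wh\la-\la}_\la$ follows upon iteration.

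For the monotonicity of $\beta+\wh\la-\la$ and the descent assertions at critical indices, I would adapt the ``moves'' analysis from the proof of Proposition \ref{amenableA}. Since $\beta(\wh w)$ is strictly increasing by construction and each divided difference increments one component by one at a position $p$ determined by the reflection, the canonical decomposition condition (reflections in $R_p$ are contained in those in $R_{p+1}$) forces the resulting sequence to remain weakly increasing. For a critical index $\ci$, the same analysis yields $\wh\xi_\ci=\max(i:\gamma_{k+i}\geq\ci)$, whence Lemma \ref{Mcdlemma} identifies $k+\wh\xi_\ci$ as a right descent of $w$ and Lemma \ref{philem} identifies $|\beta_\ci+\wh\xi_\ci-\xi_\ci|$ as a left descent. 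The main obstacle is precisely this moves bookkeeping, complicated by the fact that the critical index definition splits at $\ci=m$: one must treat $\wh w$-negative moves (which modify $\mu$) separately from $\wh w$-positive moves (which modify $\xi$), though the canonical $\sigma^-\sigma^+$ ordering helps organize the cases block by block.
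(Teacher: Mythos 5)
Your overall strategy is indeed the paper's: Corollary \ref{kleadingN} gives the base case, the formula is propagated by applying $\partial^y_i$ down the canonical reduced decomposition of $\ome$, and the descent assertions come from a ``moves'' analysis analogous to that in Proposition \ref{amenableA}. Your opening observation --- that $\beta$, $D$, $\mu$, and $\gamma_1,\ldots,\gamma_k$ are invariant under modification, so that $\wh{\la}-\la=\wh{\xi}-\xi$ --- is correct and does reconcile the two displayed forms.

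However, the central claim of your inductive step is false. The operator $\partial^y_i$ acts on the factor whose \emph{current} upper-right exponent $g_p := \beta_p + \wh{\xi}_p - \xi_p(\ov{w})$ equals $\pm i$, not on the factor with $\beta_p = \pm i$; these agree only on the very first application, when $\ov{w}=\wh{w}$ and $\xi(\wh{w})=\wh{\xi}$. Concretely, take $\wh{w}=(4,5,6,\ov{1},\ov{2},\ov{3})\in W_6$ (leading, $k=3$, $\beta=(-2,-1,0)$, $\sigma^-\sigma^+=s_2s_1$) and $\ome=s_1s_2s_1$ (a valid modification with canonical decomposition $R_4=s_1$, $R_5=s_2s_1$). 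The exponent sequence evolves as
\[
(-2,-1,0)\ \longrightarrow\ (-2,0,0)\ \longrightarrow\ (-1,0,0)\ \longrightarrow\ (0,0,0),
\]
under $\partial^y_1,\partial^y_2,\partial^y_1$ in turn. At the third step $\partial^y_1$ acts at position $1$, where the current exponent is $-1$ but $\beta_1=-2$; your rule would instead point at position $2$ (because $\beta_2=-1$), where the exponent is already $0$ and $\partial^y_1$ returns zero. For the same reason, the disjointness of the positive and absolute-negative value sets --- which controls $\beta$ but not the shifted exponents --- does not by itself rule out a second active factor at a given step. Establishing that exactly one factor is active, and that its exponent equals $\pm i$ at the moment $\partial^y_i$ is applied, is precisely what the ``moves'' bookkeeping in the paper accomplishes (tracking $\al$, $\xi_v$, and $g_v=\beta+\wh{\xi}-\xi_v$ through each $[d,e]$-move). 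That bookkeeping is therefore needed already to justify the inductive step; it is not merely a device for proving the descent assertions, as your proposal defers it to be.
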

\begin{proof}
Suppose that the truncated A-code of $\wh{w}$ is 
\[
{}^k\wh{\gamma}=(p_1^{n_1},\ldots, p_t^{n_t})
\]
for some parts $p_1>p_2>\cdots > p_t>0$,
and we let $d_j:=n_1+\cdots+n_j$ for $j\in [1,t]$. Then we have
\[
\wh{\xi}=(d_t^{p_t},d_{t-1}^{p_{t-1}-p_t},\ldots,d_1^{p_1-p_2})
\]
and it follows that 
\[
\wh{w}_{k+1}=u_{p_1+1},\ \wh{w}_{k+d_1+1}=u_{p_2+1},\ \ldots,
\ \wh{w}_{k+d_{t-1}+1}=u_{p_t+1}
\]
and $\wh{w}_j<\wh{w}_{j+1}$ for all $j\notin \{k,
k+d_1,\ldots,k+d_t\}$.  Hence the set of components of the vector
$k^{n-k}+\wh{\xi}$ coincides with the set of all right descents of
$\wh{w}$.

If $\ci\in [1,\ell]$ is a critical index, we have shown that $f_\ci$
is a right descent of $\wh{w}$. We claim that $i:=|g_\ci|=|\beta_\ci|$ is a
left descent of $\wh{w}$. By Lemma \ref{philem}(a), we may assume that
$\beta_{\ci+1}=\beta_\ci+1$, which implies that $\ci\neq m$.

Suppose that $\ci < m$. Then we have $\wh{\la}_\ci>\wh{\la}_{\ci+1}+1$ and 
$\wh{\mu}_\ci=\wh{\mu}_{\ci+1}+1$, so (\ref{pxmN}) gives
$\wh{\xi}_\ci>\wh{\xi}_{\ci+1}$. We therefore have $\ci=p_j$ for some $\ci\in
   [1,t]$, and hence $i=\wh{\mu}_{p_j}-1 =
   \wh{\mu}_{p_j+1}=-u_{p_j+1}=-\wh{w}_{k+d_{j-1}+1}$. Since we have
\[
\wh{w}_{k+1} > \wh{w}_{k+d_1+1} >\cdots > \wh{w}_{k+d_{j-1}+1} =-i,
\]
and the sequence $(\wh{w}_{k+1},\ldots,\wh{w}_n)$ is $132$-avoiding,
we conclude that $\wh{w}^{-1}(-i) = k+d_{j-1}+1 < \wh{w}^{-1}(-i-1)$, 
as desired.

Suppose next that $\ci>m$. Then we have $\wh{\la}_\ci>\wh{\la}_{\ci+1}$, so
Lemma \ref{philem}(b) and equation (\ref{pxmN}) imply that
$\wh{\xi}_\ci>\wh{\xi}_{\ci+1}$. We deduce that $\ci=p_j$ for some $j$,
hence $i+1=u_{p_j+1}$ and the result follows.

According to Corollary  \ref{kleadingN}, we have
\begin{equation}
\label{propCcor}
\CS_{\wh{w}} = R^{D(\wh{w})} \, {}^{k^{n-k}+\wh{\xi}}c^{\beta(\wh{w})}_{\la(\wh{w})}, 
\end{equation}
so the proposition holds for leading elements. Suppose next that
$w:=\ome\wh{w}$ is a modification of $\wh{w}$. Then repeated
application of (\ref{ddCeqinitN}), Lemma \ref{ddlemN}(a), and the left
Leibnitz rule (\ref{LeibRN}) in equation (\ref{propCcor}) give
\[
\CS_w = R^{D(w)} \, {}^{k^{n-k}+\wh{\xi}}c^{\beta(w)+\wh{\xi}-\xi}_{\la(w)}.
\]
It remains to check the last assertion, about the left and right 
descents of $w$.

Let $R_1\cdots R_{n-1}$ be the reduced decomposition for $\ome$ from
Definition \ref{modifdefn}. We will study the left action of the
successive simple transpositions in $R_1\cdots R_{n-1}$ on $\wh{w}$.
Observe that $\sigma^-$ and $\sigma^+$ are disjoint and $\sigma^-\sigma^+ =
\sigma^+ \sigma^-$. Moreover, the actions of the $\wh{w}$-positive and
$\wh{w}$-negative simple transpositions on $\wh{w}$ are similar, and
we can consider them separately. Let $A:=\{k+1,k+d_1+1,\ldots,
k+d_t+1\}$.

We begin with the $\wh{w}$-positive simple transpositions.  The action
of these on $\wh{w}$ is by a finite sequence of {\em moves} $v\mapsto
v'$, where $v'=s_i\cdots s_{j-1}v$ for some $i,j$ with $1\leq i < j$,
$\ell(v')=\ell(v)-j+i$, and $s_i\cdots s_{j-1}$ is a subword of some
$R_p$ with $j-i$ maximal. We call such a move an $[i,j]$-move at
position $r$ if $v_r=j$ and $v'_r=i$, so that $v'$ is obtained from
$v$ by cyclically permuting the values $i,i+1,\ldots,j$. Observe that
we must have $r\in A$, and subsequent $[i',j']$-moves for
$[i',j']\subset [i,j]$ are at positions $r'\in A$ with $r< r'$.  This
follows from the fact that the sequence
$(\wh{w}_{k+1},\ldots,\wh{w}_n)$ is $132$-avoiding, and by induction
on the number of moves.

Let $\al$ denote the truncated A-code of $v$, and $\xi_v$,
$g_v:=\beta+\wh{\xi}-\xi_v$ the associated statistics, with $\al',
\xi':=\xi_{v'}$, $g'=g_{v'}:=\beta+\wh{\xi}-\xi'$ the corresponding
ones for $v'$.  If $\al_r=e$, then we have $\al'_r=d$ for $d=e+i-j$,
and $\al'_s=\al_s$ for all $s\neq r$. If $\xi_v=(\xi_1,\xi_2,\ldots)$
and $g_v=(g_1,g_2,\ldots)$, then $g_{d+1}=i, \ldots, g_e = j-1$, while
\[
\xi'=(\xi_1,\ldots,\xi_d,\xi_{d+1}-1,\ldots,\xi_e-1,\xi_{e+1},\ldots)
\]
and
\begin{align*}
g' &= (g_1,\ldots,g_d,g_{d+1}+1,\ldots,g_e+1,g_{e+1},\ldots) \\
& = (g_1,\ldots,g_d,i+1,\ldots,j,g_{e+1},\ldots).
\end{align*}

Lemma \ref{philem} implies that the critical indices of $v$ and $v'$
can only differ in positions $d$ and $e$. Since the simple
transpositions $s_i,\ldots, s_{j-1}$ are all $\wh{w}$-positive, we
have $\beta_s+1=\beta_{s+1}$ for all $s\in [d+1,e]$. If
$\beta_d+1<\beta_{d+1}$, then $|g_d| = |\beta_d|$ is a left descent of
both $\wh{w}$ and $v'$, by Lemma \ref{philem}. We may therefore assume
that $\beta_s+1=\beta_{s+1}$ for all $s\in [d,e]$, and only need to
study the $s\in [d,e]$ where $\xi'_s>\xi'_{s+1}$. If $s\in [d+1,e]$,
since the values $i,\ldots,j$ of $v$ are cyclically permuted in $v'$,
it follows by induction on the number of moves that $g'_s$ is a left
descent of $v'$. Notice that we must have $i\geq 2$ in this
situation. It remains to prove that $g'_d=g_d=i-1$ is a left descent
of $v'$. But since $\wh{w}^{-1}(i-1)>k$ and the sequence
$(\wh{w}_{k+1},\ldots,\wh{w}_n)$ is $132$-avoiding, we deduce that
$(v')^{-1}(i)=r<(v')^{-1}(i-1)$, as desired.

The above procedure shows that for any critical index $h$ of $v'$,
we must have 
\[
\wh{\xi}_h=\max(i\ |\ \wh{\gamma}_{k+i}\geq h) =
\max(i\ |\ \al'_i \geq h), 
\]
while $k+\max(i\ |\ \al'_i \geq h)$ is a right descent of $v'$, by
Lemma \ref{Mcdlemma}. Finally, the action of the $\wh{w}$-negative
simple transpositions on $\wh{w}$ is studied in the same way.
\end{proof}

\begin{thm}
\label{TamvexC}
For any amenable element $w\in W_\infty$, we have 
\begin{equation}
\label{CSeq}
\CS_w = R^{D(w)} \, {}^{\f(w)}c_{\la(w)}^{\g(w)}
\end{equation}
in $\Gamma[X,Y]$.
\end{thm}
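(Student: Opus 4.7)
The plan is to derive Theorem \ref{TamvexC} from Proposition \ref{amenableC} in the same spirit as the proof of Theorem \ref{TamvexA}: starting from the expression for $\CS_w$ with the intermediate flags $(k^{n-k}+\wh{\xi},\,\be+\wh{\la}-\la)$ supplied by Proposition \ref{amenableC}, a short induction converts these into the desired $(\f(w),\g(w))$ at every position.

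First, I would verify that at each critical index $\ci$ the two flag-pairs already coincide. The identity $k+\wh{\xi}_\ci=\f_\ci$ is the final assertion of Proposition \ref{amenableC}. For the other coordinate, the desired equality $(\be+\wh{\la}-\la)_\ci=\g_\ci=\f_\ci+\be_\ci-\xi_\ci-k$ reduces, via (\ref{pxmN}), to $\wh{\phi}_\ci+\wh{\mu}_\ci=\phi_\ci+\mu_\ci$. Both identities $\mu=\wh{\mu}$ and $\phi=\wh{\phi}$ hold because the left action of the modification $\ome$ permutes the values of $\wh{w}$ without altering the multiset of values, nor the first $k$ entries, of the signed permutation.

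Next, I would propagate the match block by block. For $j<\ci$ non-critical, with $\ci$ the least critical index $\geq j$, the targets satisfy $\f_j=\f_\ci$ and $\g_j=\g_\ci$ by definition. Using the uniform relation $\la-\wh{\la}=\xi-\wh{\xi}$ (which follows from $\mu=\wh{\mu}$ and $\phi=\wh{\phi}$), the discrepancy between the intermediate pair at position $j$ and at position $j+1$ is controlled by the discrete changes in $\wh{\xi}$ and $\be$. A case analysis driven by whether $j<m$ or $j>m$, and by which of the three defining conditions for a critical index fails at $j$, shows that between consecutive non-critical positions exactly one of the two flag coordinates shifts, and by a single unit. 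Each such unit shift is realized, via the standard expansions of $e^r_i(X)$ and $h^s_i(-Y)$ with respect to incrementing $r$ or $s$, as a perturbation of the $j$-th factor of the form
\[
{}^\rho c^s_{\la_j}\ \rsa\ {}^\rho c^s_{\la_j}+ z\cdot {}^\rho c^s_{\la_j-1}
\]
for an appropriate $z\in X\cup(-Y)$. After this rewriting, the variable sequences at positions $j$ and $j+1$ agree, and the vector entries $(\la_j,\la_{j+1})$ have the form $(r+1,r)$ with $r>k$ (or, when $\la_j=\la_{j+1}$, the form $(r,r)$ with $2r>2k$); positivity follows from $\nu_{j+1}\geq k$ in the range $j+1\leq m$ and from the strict inequality $\la_m>\max(\la_{m+1},k)$ in the range $j>m$. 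Lemma \ref{commuteC}(b) -- or, when $\la_j=\la_{j+1}$, its part (a) -- then kills the correction term modulo (\ref{fundrelsC}), allowing the intermediate flag value at position $j$ to be replaced by the target one.

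Iterating this descending induction on $\ci-j$ across all blocks yields (\ref{CSeq}) in $\Gamma[X,Y]$. The main obstacle is the case analysis verifying that each non-critical step is a single-coordinate unit shift and that the positivity hypothesis $r>k$ of Lemma \ref{commuteC}(b) is met in each case; once this bookkeeping is in place, the application of Lemma \ref{commuteC} is formally parallel to that of Lemma \ref{commuteA}(b) in the proof of Theorem \ref{TamvexA}.
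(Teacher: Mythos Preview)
Your overall strategy matches the paper's: start from Proposition~\ref{amenableC}, verify agreement at critical indices, then inductively align the intermediate flags within each block. Your verification that $\phi=\wh{\phi}$ and $\mu=\wh{\mu}$ (hence $\la-\wh{\la}=\xi-\wh{\xi}$) is correct, as is the dichotomy (i)/(ii) governing how the intermediate flag shifts between consecutive non-critical positions.

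The gap is in which lemma you invoke in the range $j>m$, where $\la_j=\la_{j+1}$. There you propose to use Lemma~\ref{commuteC}(a), but its standing hypothesis $(j,j+1)\in D$ fails: for $j>m$ both $\be_j,\be_{j+1}$ are positive, so $\be_j+\be_{j+1}>0$ and $(j,j+1)\notin D(w)$. Even formally, Lemma~\ref{commuteC}(a) only gives the antisymmetry $R^D\s_{\ldots,r,s,\ldots}=-R^D\s_{\ldots,s,r,\ldots}$, which does not kill the actual correction term $z\,R^D\s_{\ldots,r-1,r,\ldots}$ produced by your perturbation; and your positivity claim ``$\la_j>k$'' for $j>m$ does not follow from $\la_m>\max(\la_{m+1},k)$ (nor is it needed). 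The paper uses \emph{both} Lemma~\ref{commuteA}(b) and Lemma~\ref{commuteC}(b): the latter when $j<m$ (so $(j,j+1)\in D(w)$ and $(\la_j,\la_{j+1})=(r+1,r)$ with $r>k$), and the former when $j>m$ (so $(j,j+1)\notin D(w)$ and $(\la_j,\la_{j+1})=(r,r)$, no positivity required). Once you replace your appeal to Lemma~\ref{commuteC}(a) by Lemma~\ref{commuteA}(b) in the $j>m$ case and check its column hypothesis---that $(h,j)\in D(w)$ iff $(h,j+1)\in D(w)$ for all $h<j$, which follows from $\be_{j+1}=\be_j+1$ with $\be_j\geq 1$---your argument goes through and is essentially the paper's proof.
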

\begin{proof}
We may assume that we are in the situation of Proposition
\ref{amenableC}, so that $w=\ome\wh{w}$, with $\wh{\la}=\la(\wh{w})$
and $\la=\la(w)$.  Suppose that $j\in [1,\ell]$ and let $\ci$ be the
least critical index of $w$ such that $\ci\geq j$. Then we have
$\la_j=\la_{j+1}=\cdots = \la_\ci$, if $\ci>m$, and $\la_j=\la_{j+1}+1
= \cdots = \la_\ci+(\ci-j)$, if $\ci\leq m$. Moreover, in either case,
we have $\xi_j=\cdots = \xi_\ci$, and the values
$\beta_j,\ldots,\beta_\ci$ are consecutive integers. As the sequence
$g:=\beta+\wh{\xi}-\xi$ is weakly increasing, we deduce that for any
$r\in [j,\ci-1]$, either (i) $\wh{\xi}_r=\wh{\xi}_{r+1}$ and
$g_r=g_{r+1}-1$, or (ii) $\wh{\xi}_r=\wh{\xi}_{r+1}+1$ and
$g_r=g_{r+1}$. Theorem \ref{TamvexC} follows from this and induction
on $\ci-j$, by employing Lemmas \ref{commuteA}(b) and \ref{commuteC}(b)
in Proposition \ref{amenableC}. The required conditions on $D(w)$ in
these two lemmas and the corresponding relations (\ref{fundrelsC}) are
both easily checked.
\end{proof}

\begin{remark}
The equalities such as (\ref{CSeq}) in this section occur in
$\Gamma[X,Y]$, which is a ring with relations coming from
$\Gamma$. Therefore they are not equalities of polynomials in
independent variables, in contrast to the situation in type A. The
same remark applies to the corresponding equalities in Section
\ref{dmtD}.
\end{remark}

Anderson and Fulton \cite{AF2} have introduced a family of signed
permutations, each determined by an algorithm starting from an
equivalence class of `triples'. These were named `theta-vexillary' and
studied further by Lambert \cite{Lam}. It seems plausible that the
theta-vexillary signed permutations coincide with our amenable
elements in types B and C, but we do not examine this question here.

\subsection{Flagged theta polynomials}
\label{ftps}

In this section, we define a family of polynomials $\Theta_w$ indexed
by amenable elements $w\in W_\infty$ that generalize Wilson's double
theta polynomials \cite{W, TW}. For each $k\geq 0$, let
${}^k\frakc:=({}^k\frakc_p)_{p\in \Z}$ be a family of variables, such
that ${}^k\frakc_0=1$ and ${}^k\frakc_p=0$ for $p<0$, and let
$t:=(t_1,t_2,\ldots)$. The polynomial $\Theta_w$ represents an
equivariant Schubert class in the $T$-equivariant cohomology ring of
the symplectic partial flag variety associated to the right flag $\f(w)$,
which is defined in \cite[Sec.\ 4.1]{T1}.  The $t$ variables come from
the characters of the maximal torus $T$, as explained in \cite{TW}.

For any integers $p$ and $r$, define
\[
{}^k\frakc_p^r:=\sum_{j=0}^p {}^k\frakc_{p-j}h_j^r(-t).
\]
Given integer sequences $\kk$, $\al$, and $\rho$, let ${}^\kk
\frakc^{\rho}_\al:= {}^{\kk_1}\frakc_{\al_1}^{\rho_1}\,
      {}^{\kk_2}\frakc_{\al_2}^{\rho_2} \cdots$, and let any raising
      operator $R$ act in the usual way, by $R\,{}^\kk \frakc^{\rho}_\al:=
      {}^\kk \frakc^{\rho}_{R\al}$.

If $w\in W_n$ is amenable with left flag $\f(w)$ and right flag $\g(w)$, then the 
{\em flagged double theta polynomial} $\Ti_w(\frakc\, |\, t)$ is defined by 
\begin{equation}
\label{fgtieq}
\Ti_w(\frakc\, |\, t):= R^{D(w)}\, {}^{\f(w)}\frakc_{\la(w)}^{\g(w)}.
\end{equation}
The {\em flagged single theta polynomial} is given by
$\Ti_w(\frakc):=\Ti_w(\frakc\, |\, 0)$. If $w$ is a leading element,
then (\ref{fgtieq}) can be written in the `factorial' form
\[
\Ti_w(\frakc\, |\, t)= R^{D(w)}\, {}^{\f(w)}\frakc_{\la(w)}^{\be(w)}.
\]
When $w$ is a $k$-Grassmannian element, the above formulas specialize
to the double theta polynomial $\Ti_\la(\frakc\, |\, t)$ found in
\cite{TW}; here $\la$ is the $k$-strict partition corresponding to
$w$. Moreover, the single theta polynomial $\Ti_\la(\frakc)$ agrees
with that of \cite{BKT2}.

\subsection{Symplectic degeneracy loci}
\label{sdl}

Let $E\to \X$ be a vector bundle of rank $2n$ on a smooth complex algebraic
variety $\X$. Assume that $E$ is a {\em symplectic} bundle, so that $E$
is equipped with an everywhere nondegenerate skew-symmetric form
$E\otimes E\to \C$. Let $w\in W_n$ be amenable 
of shape $\la$, and let $\f$ and $\g$ be the left and right flags of 
$w$, respectively. Consider two complete flags of subbundles of $E$
\[
0 \subset E_1\subset \cdots \subset E_{2n}=E \ \ \, \mathrm{and} \, \ \ 
0 \subset F_1\subset \cdots \subset F_{2n}=E
\]
with $\rank E_r=\rank F_r=r$ for each $r$, while $E_{n+s}=E_{n-s}^{\perp}$ 
and $F_{n+s}=F_{n-s}^{\perp}$ for $0\leq s < n$. 

There is a group monomorphism $\zeta:W_n\hra S_{2n}$ with image
\[
\zeta(W_n)=\{\,\om\in S_{2n} \ |\ \om_i+\om_{2n+1-i} = 2n+1,
 \ \ \text{for all}  \ i\,\}.
\]
The map $\zeta$ is determined by setting,
for each $w=(w_1,\ldots,w_n)\in W_n$ and $1\leq i \leq n$,
\[
\zeta(w)_i :=\begin{cases}
             n+1-w_{n+1-i} & \text{if $w_{n+1-i}$ is unbarred}, \\
             n+\ov{w}_{n+1-i} & \text{otherwise}.
             \end{cases} 
\]
Define the {\em degeneracy locus}
$\X_w\subset \X$ as the locus of $x \in \X$ such that
\[
\dim(E_r(x)\cap F_s(x))\geq \#\,\{\,i \leq r
\ |\ \zeta(w)_i > 2n-s\,\} \ \, \mathrm{for} \ 1\leq r \leq n, \, 1\leq s \leq 2n.
\]
We assume that $\X_w$ has pure codimension $\ell(w)$ in $\X$, and give
a formula for the class $[\X_w]$ in $\HH^{2\ell(w)}(\X)$.

\begin{thm}
\label{dbleCloci}
For any amenable element $w\in W_n$, we have
\begin{equation}
\label{CChern}
[\X_w] = \Theta_w(E-E_{n-\f}-F_{n+\g}) =
R^{D(w)}\, c_{\la}(E-E_{n-\f}-F_{n+\g})
\end{equation}
in the cohomology ring $\HH^*(\X)$.
\end{thm}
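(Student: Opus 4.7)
The plan is to deduce the cohomology formula from the polynomial identity of Theorem \ref{TamvexC} via a universal geometric model. First, I would reduce to the universal case: replace $\X$ by the bundle $\pi \colon \Y\to \X$ parametrizing pairs of complete isotropic flags of $E$, where two tautological flags $\wt{E}_\bull$ and $\wt{F}_\bull$ live. By functoriality of Chern classes and the compatibility of degeneracy locus definitions with pullback (cf.\ \cite{F1,F2}), it suffices to prove (\ref{CChern}) with $E_\bull, F_\bull$ replaced by $\wt{E}_\bull, \wt{F}_\bull$ on $\Y$, since pulling back both sides along any section $\X\to\Y$ (which exists locally) recovers the original statement.

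Next, I would invoke the Kempf--Laksov/Fulton--style result for the symplectic group, in the equivariant form due to Ikeda, Mihalcea, and Naruse \cite{IMN}: the class of the universal Schubert variety $\Omega_w$ in the $T$-equivariant cohomology of the symplectic flag variety is represented by $\CS_w$, with the ring $\Gamma[X,Y]$ mapping geometrically by sending $c_p\mapsto c_p(E-\wt{E}_n)$, the variables $x_i$ to the negative Chern roots of the successive quotients of $\wt{E}_\bull$, and $y_j$ to the Chern roots of the quotients of $\wt{F}_\bull$. Under this map, the defining relations (\ref{basicrelsN}) go to the Whitney relations for the symplectic bundle, and the degeneracy locus $\X_w$ maps to $\Omega_w$. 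Thus $[\X_w]$ is the image of $\CS_w$ under this evaluation.

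Now I would apply Theorem \ref{TamvexC} to write $\CS_w = R^{D(w)} \, {}^{\f(w)}c_{\la(w)}^{\g(w)}$ and compute its image under the geometric map. The heart of the matter is the identity
\[
{}^rc^s_p \;=\; \sum_{i+j\leq p} c_{p-i-j}\, e^r_i(X)\, h^s_j(-Y) \;\longmapsto\; c_p\bigl(E-\wt{E}_{n-r}-\wt{F}_{n+s}\bigr),
\]
which follows from the Whitney formula: the total Chern class of $E-\wt{E}_{n-r}-\wt{F}_{n+s}$ is the product of $c(E-\wt{E}_n)$, $c(\wt{E}_n/\wt{E}_{n-r})^{-1}$ (whose $i$-th part corresponds to $e^r_i(X)$), and $c(\wt{F}_{n+s}/\wt{F}_n)^{-1}$ (whose $j$-th part corresponds to $h^s_j(-Y)$), using that $\wt{E}_{n+s} = \wt{E}_{n-s}^{\perp}$ inverts the convention on the $y$'s. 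Here it is crucial that $\f_j$ is a right descent and $|\g_j|$ a left descent of $w$, as established in Proposition \ref{amenableC}, because this ensures the evaluation factors through the appropriate partial flag bundle where these symmetric functions are well-defined. Since $R^{D(w)}$ acts on the formal $\frakc$-variables before evaluation and the evaluation is $\Z$-linear in the indexing sequence, we obtain the desired factorization $[\X_w]=R^{D(w)}\,c_\la(E-\wt{E}_{n-\f}-\wt{F}_{n+\g})=\Ti_w(E-\wt{E}_{n-\f}-\wt{F}_{n+\g})$.

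The main obstacle is the bookkeeping in Step 3: verifying that the two sign conventions (involution $\varphi$ on $\Gamma[X,Y]$ versus the symplectic pairing $E_{n+s}=E_{n-s}^{\perp}$) are consistent, that the substitution is well-defined modulo the ideal of relations (\ref{fundrelsC}), and that the flags $\f, |\g|$ index the correct partial symplectic flag bundle so the evaluation map factors through the quotient where $\CS_w$ represents $[\Omega_w]$. Once this dictionary is pinned down, the equality of the two expressions for the class becomes a direct consequence of the polynomial identity in Theorem \ref{TamvexC}.
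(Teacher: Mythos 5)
Your proposal takes essentially the same route as the paper: the paper's proof is a one-line deferral to the type C geometrization map of \cite[Sec.\ 10]{IMN} applied to equation (\ref{CSeq}), following \cite[Sec.\ 4.2]{T1}, and your sketch unpacks exactly that — pass to the universal flag bundle, identify $\CS_w$ with the class of the universal Schubert variety via the IMN geometrization, substitute Theorem \ref{TamvexC}, and check that ${}^{\f_j}c^{\g_j}_{\la_j}$ evaluates to $c_{\la_j}(E-E_{n-\f_j}-F_{n+\g_j})$ via Whitney. The only caveat is that the factorization you write for the total Chern class, $c(E-\wt E_n)\,c(\wt E_n/\wt E_{n-r})^{-1}\,c(\wt F_{n+s}/\wt F_n)^{-1}$, needs the inverses sorted out with care against the IMN sign conventions (the cited references do this bookkeeping), but this is a matter of detail rather than strategy.
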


As in \cite[Eqn.\ (7)]{TW}, the Chern polynomial in (\ref{CChern}) is
interpreted as the image of the polynomial $R^{D(w)} \mathfrak{c}_\la$
under the $\Z$-linear map which sends the noncommutative monomial
$\frakc_\al=\frakc_{\al_1}\frakc_{\al_2}\cdots$ to $\prod_j
c_{\al_j}(E-E_{n-\f_j}-F_{n+\g_j})$, for every integer sequence $\al$.
Theorem \ref{dbleCloci} is proved by applying the type C
geometrization map of \cite[Sec.\ 10]{IMN} to both sides of
(\ref{CSeq}), following \cite[Sec.\ 4.2]{T1}.

\section{Amenable elements: Type D theory}
\label{Dtheory}

\subsection{Definitions and main theorem}
\label{dmtD}

Let $w$ be an element in $\wt{W}_\infty$ with A-code $\gamma$ and
shape $\la=\mu+\nu$, with $\ell=\ell(\la)$ and $m=\ell(\mu)$.  Choose
$k\geq 1$, and assume that $w$ is increasing up to $k$.  If $k=1$,
this condition is vacuous, while if $k>1$ it means that
$|w_1|<w_2<\cdots<w_k$. Eventually, $k$ will be set equal to the
primary index of $w$.

List the entries $w_{k+1},\ldots,w_n$ in increasing order:
\[
u_1<\cdots < u_{m'} < 0 < u_{m'+1} < \cdots < u_{n-k},
\]
where $m'\in\{m,m+1\}$. Define a sequence $\beta(w)$ by
\[
\beta(w):=(u_1+1,\ldots,u_{m'}+1,u_{m'+1},\ldots,u_{n-k}).
\]
and the {\em denominator set} $D(w)$ by
\[
D(w):=\{(i,j) \ |\ 1\leq i<j \leq n-k \ \, \text{and} \ \, 
u_i+ u_j < 0 \}.
\]
As in Section \ref{dmtC}, the notation suppresses the dependence of
$\beta(w)$ and $D(w)$ on $k$.

\begin{defn}
\label{Dtrunc}
Suppose that $w\in \wt{W}_n$ has code $\gamma=\gamma(w)$ and $k\geq
1$.  The {\em $k$-truncated A-code} ${}^k\gamma={}^k\gamma(w)$ is
defined by ${}^k\gamma(w):=(\gamma_{k+1}, \gamma_{k+2},\ldots,
\gamma_n)$.  If $k$ is the primary index of $w$, then we call
${}^k\gamma(w)$ the {\em truncated A-code} of $w$. Let $\xi=\xi(w)$
be the partition whose parts satisfy
$\xi_j:=\#\{i\ |\ \gamma_{k+i}\geq j\}$ for each $j\geq 1$.
\end{defn}

We let $v(w)$ be the unique $k$-Grassmannian element obtained by
reordering the entries $w_{k+1},\ldots,w_n$ to be increasing.  For
example, if $w:=(\ov{2},4,7,5,\ov{8},\ov{3},1,\ov{6})$ and $k:=3$,
then $v(w)=(\ov{2},4,7,\ov{8},\ov{6},\ov{3},1,5)$.  The map $w \mapsto
v(w)$ is a type-preserving bijection from the set of elements in
$\wt{W}_n$ increasing up to $k$ with a given $k$-truncated A-code $C$
onto the set of $k$-Grassmannian elements in $\wt{W}_n$, such that
$\beta(v(w))=\beta(w)$ and
\[
\ell(v(w)) = \ell(w)-\sum_{i=k+1}^n C_i = \ell(w)-\sum_{j=1}^{n-k} \gamma_{k+j}.
\]

\begin{lemma}
\label{Dcases}
Let $w$ and $\ov{w}$ be elements in $\wt{W}_n$ increasing up to $k\geq 1$
and with the same $k$-truncated A-code $C$, 
such that $\ell(w)=\ell(\ov{w})+1$. Suppose that 
$v(\ov{w})=s_iv(w)$ for some simple reflection $s_i$. 
Then $\ov{w}=s_iw$. 
\end{lemma}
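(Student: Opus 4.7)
The plan is to mimic the case analysis used in the type~C version of this lemma (proved just after Definition~\ref{Ctrunc}), with the extra complication that in type~D the reflection $s_\Box$ also has to be handled. By Lemma~\ref{WDlem}, we split into two main regimes: $i\geq 1$, where $v(w)$ takes one of the four forms listed in Lemma~\ref{WDlem}(b), and $i=\Box$, where $v(w)$ takes one of the four forms listed in Lemma~\ref{WDlem}(a). In each subcase the goal is to verify three things about $s_iw$: (a) $s_iw$ still lies in $\wt{W}_n$ and is increasing up to $k$; (b) the $k$-truncated A-code of $s_iw$ equals that of $w$; and (c) $v(s_iw)=s_iv(w)$. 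Granted (a)--(c), combined with the hypothesis $v(\ov w)=s_iv(w)$, the type-preserving bijection $w\mapsto v(w)$ stated in the paragraph preceding the lemma forces $s_iw=\ov w$.

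For $i\geq 1$ the argument is essentially the one from type~C. Three of the four forms, namely $(\cdots i\cdots\ov{i+1}\cdots)$, $(\cdots\ov{i+1}\cdots i\cdots)$, and $(\cdots\ov i\cdots\ov{i+1}\cdots)$, place the two values being swapped entirely inside positions $[k+1,n]$ (or, when paired with the first $k$ entries, they lie in different blocks in a way that does not affect the sort). The only subtle form is $(\cdots i+1\cdots i\cdots)$, where because $v_{k+1}<\cdots<v_n$ the value $i+1$ is forced into positions $[1,k]$ while $i$ sits in $[k+1,n]$. The key combinatorial observation throughout is that no integer strictly between $i$ and $i+1$ exists, so replacing $i$ by $i+1$ (or vice versa) in the unsorted tail preserves the relative order of that entry with respect to everything else; this delivers (b) and (c), while (a) is an easy inspection.

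For $i=\Box$ the same three verifications are needed, but with attention to $\wt{W}_n$-specific subtleties. In each of the four forms from Lemma~\ref{WDlem}(a) one has to locate the two entries of the pattern: entries like $\ov 2$ must lie in $[k+1,n]$ because the first $k$ slots require $|v_1|<v_2<\cdots<v_k$, whereas a $\wh 1$ can sit either at position $1$ or in the tail. One then checks that $s_\Box w$ retains an even number of sign changes, is still increasing up to $k$, and has the same $k$-truncated A-code; the argument for (c) is the same "no integer between them" trick, applied simultaneously to the pair $(\ov 1,\ov 2)$ or $(1,\ov 2)$ etc.

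The step I expect to be the main obstacle is the $s_\Box$ case when the pattern involves position $1$, in particular the form $(\wh 1\cdots\ov 2\cdots)$ with $|w_1|=1$. Here the action of $s_\Box$ simultaneously changes the signs of $w_1$ and of one tail entry, and one must confirm both that $s_\Box w$ stays in $\wt{W}_n$ and that the condition $|w_1|<w_2$ survives. Once this bookkeeping is handled, (a)--(c) follow uniformly and the bijectivity of $v$ closes the proof.
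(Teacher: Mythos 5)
Your proposal follows essentially the same route as the paper: a case analysis governed by Lemma \ref{WDlem}, combined with the observation that the values moved by $s_i$ either both lie in the sorted tail or sit in different blocks (first $k$ entries versus tail positions $[k+1,n]$) in a way that commutes with the reordering map $v$. The paper's proof explicitly flags cases (f) $v(w)=(\ov{i}\cdots\ov{i+1}\cdots)$ and (g) $v(w)=(\cdots i+1\cdots i\cdots)$ as those needing the block-membership argument (noting that $\ov{i}$, resp.\ $i+1$, must lie among the first $k$ entries while $\ov{i+1}$, resp.\ $i$, sits in the tail), whereas you single out $(\wh{1}\cdots\ov{2}\cdots)$ as the hurdle; this is a difference of emphasis only, and your bijectivity-of-$v$ wrapper is a valid alternative to the paper's direct verification.
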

\begin{proof} 
We have seven possible cases for $i$ and $v(w)$: (a) $i=\Box$ and
$v(w) = (\wh{1} \cdots \ov{2} \cdots)$; (b) $i=\Box$ and $v(w) =
(\cdots \ov{2} \cdots \ov{1} \cdots)$; (c) $i=\Box$ and $v(w) = (2
\cdots \ov{1} \cdots)$; (d) $i\geq 1$ and $v(w) = (\cdots i \cdots
\ov{i+1} \cdots)$; (e) $i\geq 1$ and $v(w) = (\cdots \ov{i+1} \cdots i
\cdots)$; (f) $i\geq 1$ and $v(w) = (\ov{i} \cdots \ov{i+1} \cdots)$;
(g) $i \geq 1$ and $v(w) = (\cdots i+1 \cdots i \cdots)$. In the first
five cases, it is clear that $\ov{w}=s_iw$. In case (f) (respectively (g)),
the $\ov{i}$ (respectively $i+1$) must be among the first $k$ entries of
$v(w)$, which coincide with the first $k$ entries of $w$, while
$\ov{i+1}$ (respectively $i$) lies among the last $n-k$ entries of
$w$. Hence we again deduce that $\ov{w}=s_iw$. 
\end{proof}

If $R:=\prod_{i<j} R_{ij}^{n_{ij}}$ is any
raising operator and $d \geq 0$, denote by $\supp_d(R)$ the set of all
indices $i$ and $j$ such that $n_{ij}>0$ and $j \leq d$.  

\begin{defn}
\label{Etadef} 
Let $w\in\wt{W}_n$ be of shape $\la=\mu+\nu$, with $\ell=\ell(\la)$
and $m=\ell(\mu)$. Let $\al=(\al_1,\ldots,\al_{\ell})$ be a
composition such that $\al_{m+1}=\la_{m+1}$, if $\type(w)>0$, and
$\upsilon=(\upsilon_1,\ldots,\upsilon_\ell)$ be an integer vector such
that $\upsilon_{m+1}\in \{0,1\}$. For any integer vector $\rho$,
define
\[
{}^\al\wh{c}_\rho^{\,\upsilon}:= {}^{\al_1}\wh{c}_{\rho_1}^{\,\upsilon_1}
{}^{\rho_2}\wh{c}_{\rho_2}^{\,\upsilon_2}\cdots
\]
where, for each $i\geq 1$, 
\begin{equation}
\label{f_requ}
{}^{\al_i}\wh{c}_{\rho_i}^{\,\upsilon_i}:= {}^{\al_i}c_{\rho_i}^{\upsilon_i} + 
\begin{cases}
(-1)^ie^{\al_i}_{\al_i}(X)e^{\rho_i-\al_i}_{\rho_i-\al_i}(-Y) & 
\text{if $\upsilon_i = \al_i - \rho_i < 0$}, \\
0 & \text{otherwise}.
\end{cases}
\end{equation}
Let $R$ be any raising
operator appearing in the expansion of the power series $R^{D(w)}$ and
set $\rho:=R\la$. If $\type(w)=0$, then define
\[
R \star {}^{\al}\wh{c}^{\,\upsilon}_{\la} = {}^{\al}\ov{c}^\upsilon_{\rho} :=
{}^{\al_1}\ov{c}_{\rho_1}^{\upsilon_1}\cdots {}^{\al_\ell}\ov{c}^{\upsilon_\ell}_{\rho_\ell}
\]
where, for each $i\geq 1$, 
\[
{}^{\al_i}\ov{c}_{\rho_i}^{\upsilon_i}:= 
\begin{cases}
{}^{\al_i}c_{\rho_i}^{\upsilon_i} & \text{if $i\in\supp_m(R)$}, \\
{}^{\al_i}\wh{c}_{\rho_i}^{\,\upsilon_i} & \text{otherwise}.
\end{cases}
\]
If $\type(w)>0$ and $R$ involves any factors $R_{ij}$ with $i=m+1$ or
$j=m+1$, then define
\[
R \star {}^{\al}\wh{c}^{\,\upsilon}_{\la} := {}^{\al_1}\ov{c}_{\rho_1}^{\upsilon_1} \cdots 
{}^{\al_m}\ov{c}_{\rho_m}^{\upsilon_m} \, {}^{\al_{m+1}}a^{\upsilon_{m+1}}_{\rho_{m+1}} \,{}^{\al_{m+2}} c_{\rho_{m+2}}^{\upsilon_{m+2}}
\cdots {}^{\al_{\ell}}c^{\upsilon_\ell}_{\rho_\ell}.
\]
If $R$ has no such factors, then define
\[
R \star {}^{\al}\wh{c}^{\,\upsilon}_{\la} := \begin{cases}
  {}^{\al_1}\ov{c}_{\rho_1}^{\upsilon_1} \cdots
  {}^{\al_m}\ov{c}_{\rho_m}^{\upsilon_m} \,
  {}^{\al_{m+1}}b^{\upsilon_{m+1}}_{\la_{m+1}} \,
  {}^{\al_{m+2}}c_{\rho_{m+2}}^{\upsilon_{m+2}} \cdots
  {}^{\al_\ell}c^{\upsilon_\ell}_{\rho_\ell} & \text{if $\,\type(w)
    = 1$}, \\ {}^{\al_1}\ov{c}_{\rho_1}^{\upsilon_1} \cdots
  {}^{\al_m}\ov{c}_{\rho_m}^{\upsilon_m} \,
  {}^{\al_{m+1}}\wt{b}^{\upsilon_{m+1}}_{\la_{m+1}} \,
  {}^{\al_{m+2}}c_{\rho_{m+2}}^{\upsilon_{m+2}} \cdots
  {}^{\al_\ell}c^{\upsilon_\ell}_{\rho_\ell} & \text{if $\,\type(w)
    = 2$}.
\end{cases}
\]
\end{defn}

\begin{prop}
\label{domnD}
Suppose that $w\in \wt{W}_n$ is an element with primary index $k$ such that
$(w_1,\ldots,w_k)=(\wh{1},2,\ldots,k)$, $w_{k+j}<0$ for $1\leq j\leq n-k$, and
the truncated A-code ${}^k\gamma(w)$ is a partition. Then we have
\begin{equation}
\label{domnegD}
\DS_w = 2^{k-n}\, R^{D(w)} \star {}^{\nu(w)}\wh{c}^{\,(1-n,2-n,\ldots, -k)}_{\la(w)}.
\end{equation}
\end{prop}
\begin{proof}
The proof of (\ref{domnegD}) is by
descending induction on $\ell(w)$. One knows from 
\cite[\S 4.4]{T5} that (\ref{domnegD}) is true for the longest element
$w_0^{(k,n)}:=(\wh{1},2,\ldots,k, -k-1,\ldots,-n)$, which has shape 
$(n+k-1,\ldots,2k)+\delta_{n-k-1}$ of type 0.

Suppose that $w\neq w_0^{(k,n)}$ satisfies the conditions of the
proposition, and the shape of $w$ equals $(n+k-1,\ldots,2k)+\rho(w)$
(of type 0). Then $\rho\subset \delta_{n-k-1}$ and $\rho\neq
\delta_{n-k-1}$. Let $r\geq 1$ be the largest integer such that
$\rho_i=n-k-i$ for $i\in [1,r]$, and let $j:=\rho_{r+1}+1\leq
n-k-r-1$.  Then $ws_{k+j}$ is of length $\ell(w)+1$ and satisfies the
same conditions, $\nu(ws_{k+j})=\nu(w)+\epsilon_{r+1}$, and
$\la(ws_{k+j})=\la(w)+\epsilon_{r+1}$.  Using Proposition
\ref{prop1new}(a), for any integer sequence
$\al=(\al_1,\ldots,\al_n)$, we have
\[
\partial^x_{k+j}\left({}^{\nu(ws_{k+j})}\wh{c}^{\,(1-n,\ldots, -k)}_{\al}\right) = 
{}^{\nu(w)}\wh{c}^{\,(1-n,\ldots, -k)}_{\al-\epsilon_{r+1}}.
\]
By induction, we deduce that
\begin{align*}
\DS_w &= \partial^x_{k+j}\left(\DS_{ws_{k+j}}\right) = 2^{k-n}\,
\partial^x_{k+j}\left(R^{\infty} \star
{}^{\nu(ws_{k+j})}\wh{c}^{\,(1-n,\ldots, -k)}_{\la(ws_{k+j})}\right) \\
&= 2^{k-n}\,R^{\infty} \star {}^{\nu(w)}\wh{c}^{\,(1-n,\ldots, -k)}_{\la(w)},
\end{align*}
proving the proposition.
\end{proof}

Let $w\in \wt{W}_\infty$ have shape $\la = \mu+\nu$ and $\kk$ be any
integer sequence. We say that $\kk$ is {\em compatible with $w$} if
$\kk_p=\nu_p$ for $p\in [1,m]$, and $\kk_{m+1}=\nu_{m+1}$ whenever
$\type(w)>0$.

\begin{prop}
\label{gampropDN}
Fix an integer $k\geq 1$. Suppose that $w$ and $\ov{w}$ are elements
in $\wt{W}_n$ increasing up to $k$ with the same $k$-truncated A-code $C$, such
that $\ell(w)=\ell(\ov{w})+1$ and $s_iv(w)=v(\ov{w})$ for some
simple reflection $s_i$. Assume that we have
\[
\DS_w = 2^{-\ell(\mu(w))}\,R^{D(w)}\star {}^{\kk}\wh{c}^{\,\beta(w)}_{\la(w)} 
\]
in $\Gamma'[X,Y]$, for some integer sequence $\kk$ compatible with
$w$.  Moreover, if $i\in\{\Box,1\}$ and $|w_1|>2$, assume that
$\kk_m=\kk_{m+1}$.  Then $\kk$ is compatible with $\ov{w}$, and we
have
\[
\DS_{\ov{w}} = 2^{-\ell(\mu(\ov{w}))}\,R^{D(\ov{w})}\star {}^{\kk}\wh{c}^{\,\beta(\ov{w})}_{\la(\ov{w})}
\]
in $\Gamma'[X,Y]$.
\end{prop}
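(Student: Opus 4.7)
The strategy is to apply the left divided difference $\partial_i^y$ to the assumed formula for $\DS_w$ and invoke equation (\ref{Dddeq}), $\partial_i^y\DS_w = \DS_{\ov{w}}$, to deduce the claimed formula for $\DS_{\ov{w}}$. The proof mirrors that of Proposition \ref{gampropN} in the symplectic setting, but with the seven cases of Lemma \ref{Dcases} instead of four, and with the additional complications coming from the $\star$-product of Definition \ref{Etadef}, in particular the distinguished factor at position $m+1$.

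First I would do the bookkeeping for each of the seven cases in Lemma \ref{Dcases}, tracking $w\rsa \ov{w}$: the positions $p$ (and $q$) in which $\la$, $\beta$, $\mu$, $\nu$ change, whether $\type(w)$ is preserved, how $\ell(\mu)$ changes, and the relation between $D(w)$ and $D(\ov{w})$. I would then verify that $\kk$ remains compatible with $\ov{w}$; the extra hypothesis $\kk_m = \kk_{m+1}$ in the cases $i\in\{\Box,1\}$ with $|w_1|>2$ is exactly what is needed at the transition where $\ell(\mu)$ shifts or the type of the element changes, so that the value of $\kk$ at the position affected by $\partial^y_i$ matches on both sides.

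Next, for each case I would apply the left Leibnitz rule (\ref{LeibRND}) to distribute $\partial_i^y$ across the factors in the $\star$-product, and then appeal to Propositions \ref{prop1new} and \ref{prop1hnew} to evaluate the resulting single-factor and two-factor actions. Cases (d), (e), (g) of Lemma \ref{Dcases} with $i\geq 1$ run parallel to cases (b), (c), (d) of Proposition \ref{gampropN}: Proposition \ref{prop1new}(a)(b) applies directly, and case (e) produces the familiar identity
\[
R^{D(w)} + R^{D(w)} R_{pq} = R^{D(\ov{w})}
\]
which reshapes the denominator. In case (f) the sign at position $1$ of $v(w)$ flips, so $m$ decreases by one and the position $m+1$ of the $\star$-product changes; here Proposition \ref{prop1hnew}(a) with $r-p=-1$ is applied, converting $2f_r^s$ at the old position $m+1$ into the entry required at the new position $m$, while the factor of $2$ accounts for the change in $2^{-\ell(\mu)}$.

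The principal obstacle lies in the three cases with $i=\Box$, which have no analogue in type C. There $\partial_\Box^y$ acts on positions $1$ and $2$ simultaneously through the more delicate formulas of Proposition \ref{prop1hnew}(a); in particular the identity $\partial_\Box^y({}^r\wh{c}_p^{r-p}) = 2\wt{f}_r^1$ is precisely what is needed to exchange the $b^{\upsilon}_{\la_{m+1}}$ factor in $\DS_w$ for a $\wt{b}^{\upsilon}_{\la_{m+1}}$ (or $a$) factor in $\DS_{\ov{w}}$ when $\type(w)\neq \type(\ov{w})$, together with the corresponding transformation of the hat-modified entries at position $m$. The hypothesis $\kk_m=\kk_{m+1}$ is what allows the two positions affected by $\partial_\Box^y$ to be treated symmetrically, and the resulting factor of $2$ from Proposition \ref{prop1hnew}(a) absorbs the unit decrease in $\ell(\mu)$, so the normalization $2^{-\ell(\mu(\ov w))}$ comes out correctly. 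Once these cases are settled, the remaining verifications — that the non-affected positions of the $\star$-product are preserved under $\partial_\Box^y$ (using Proposition \ref{prop1hnew}(b) for $i\geq 2$ and the vanishing clauses of Proposition \ref{prop1new}(a) for $|q|\geq 2$) — are routine.
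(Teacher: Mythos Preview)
Your overall strategy is correct and matches the paper's: apply $\partial_i^y$ to the assumed formula, invoke (\ref{Dddeq}), and do a case analysis governed by Lemma \ref{Dcases}, using Propositions \ref{prop1new} and \ref{prop1hnew} together with the Leibnitz rule. Several of your case identifications are also correct. However, there is a genuine gap.

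The principal obstacle is \emph{not} the three $i=\Box$ cases as such; it is the situation where $\type(w)$ and $\type(\ov{w})$ are \emph{both positive}. This occurs in the paper's case (g), namely $v(w)=(\cdots \ov{2}\,\ov{1}\cdots)$ with $|w_1|>2$ and $i=\Box$, and in the $i=1$ subcase $v(w)=(\cdots \ov{2}\,1\cdots)$ with $|w_1|>2$. In both subcases one has $p=m$, $D(w)=D(\ov{w})\cup\{(m,m+1)\}$, and the distinguished $(m{+}1)$-st factor of the $\star$-product (an $a$, $b$, or $\wt{b}$ term depending on whether $R$ touches position $m+1$) must be handled simultaneously with the change in the raising operator. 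A single application of Proposition \ref{prop1hnew}(a) does not suffice here. The paper completes this case by invoking the quadratic relations
\[
f_r\wt{f}_r + 2\sum_{j=1}^r (-1)^j\, {}^ra^0_{r+j}\,{}^ra^0_{r-j}
= \wt{f}^1_rf^1_r + 2\sum_{j=1}^r (-1)^j\, {}^ra^1_{r+j}\,{}^ra^1_{r-j} = 0
\]
(with $r=\la_{m+1}$) together with a Pfaffian-expansion induction in the style of \cite[Lemma~1.3]{BKT2} to kill the residual terms when $\ell(\la)>m+1$. The hypothesis $\kk_m=\kk_{m+1}$ is used precisely here, to force $\nu_m=\nu_{m+1}$ and hence $\la_m=\la_{m+1}+1$, which is what makes those relations applicable. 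Your proposal does not mention these relations or this induction, and without them the argument for this case does not close.

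A secondary issue: your description of case (f) of Lemma \ref{Dcases} is incorrect. There $v(w)=(\ov{i}\cdots\ov{i+1}\cdots)$ with $i\geq 1$; applying $s_i$ on the left sends $\ov{i}\mapsto\ov{i+1}$ and $\ov{i+1}\mapsto\ov{i}$, so the sign at position $1$ does \emph{not} flip and $\ell(\mu)$ does \emph{not} change. Likewise, the assertion that ``$\partial_\Box^y$ acts on positions $1$ and $2$'' conflates the $y$-variable indices with the factor indices $1,\ldots,\ell$ in the product; in the relevant cases $\partial_\Box^y$ affects the factors at positions $m$ and $m+1$ (where $\beta$ takes the values $-1$ and $0$), not positions $1$ and $2$. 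The paper's proof is organized not by the seven cases of Lemma \ref{Dcases} directly, but by the four possibilities for $(\type(w),\type(\ov{w}))$, which is the organizing principle you are missing and which isolates the hard case above.
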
 
\begin{proof}
Set 
\begin{equation}
\label{Feq}
F_w:=2^{-\ell(\mu(w))}\,R^{D(w)} \star
{}^{\kk}\wh{c}^{\,\beta(w)}_{\la(w)}, 
\end{equation}
so we know that $\DS_w=F_w$. As equation (\ref{Dddeq}) gives
$\partial^y_i\DS_w = \DS_{\ov{w}}$, it will suffice to show that
$\partial^y_i F_w = F_{\ov{w}}$.  The proof of this will follow the
argument of \cite[Prop.\ 5]{T4}, and correct it by including the case
(h) below, which was missing there.

Let $\mu:=\mu(w)$, $\nu:=\nu(w)$, $\la:=\la(w)=\mu+\nu$,
$\ov{\mu}:=\mu(\ov{w})$, $\ov{\nu}:=\nu(\ov{w})$,
$\ov{\la}:=\la(\ov{w})= \ov{\mu}+\ov{\nu}$, $\beta=\beta(w)$, and
$\ov{\beta}=\beta(\ov{w})$.  Using Lemma \ref{WDlem}, we distinguish
eight possible cases for $w$. In each case, we have
$\ov{\la}\subset\la$, so that $\ov{\la}_p=\la_p-1$ for some $p\geq 1$
and $\ov{\la}_j=\la_j$ for all $j\neq p$. Moreover, we must have
$\type(w)+\type(\ov{w}) \neq 3$.

First, we consider the four cases with $i\geq 1$:

\medskip
\noin (a) $v(w) = (\cdots i+1 \cdots i \cdots)$.  In this case
$D(w)=D(\ov{w})$ while clearly $\mu=\ov{\mu}$.  We deduce that
$\ov{\nu}_p=\nu_p-1$, while $\ov{\nu}_j=\nu_j$ for all $j\neq p$.  We
must show that $\be_p=i$, and hence $\ov{\be}_p=i+1$, while
$\be_j=\ov{\be}_j$ for all $j\neq p$.  

Note that if $w_r=i+1$, then $r\in [1,k]$.  Since $w_j\geq w_r$ for
all $j\in [r,k]$, and the sequence $\beta(w)$ is strictly increasing,
we deduce that $\beta_g=i$ exactly when $g=\gamma_r(w)$.  We have
$\gamma_r(\ov{w})=\gamma_r(w)-1=g-1$, while
$\gamma_j(w)=\gamma_j(\ov{w})$ for $j\neq r$. It follows that
$\ov{\nu}_g=\nu_g-1$, while $\ov{\nu}_j=\nu_j$ for all $j\neq g$. In
other words, $g=p$, as desired. 

Finally, observe that (i) if $i=1$, then $p=m+1$, $\type(w)>0$, and
$\type(\ov{w})=0$; (ii) if $i\geq 2$, then $p>m$ and
$\type(w)=\type(\ov{w})$, while $p>m+1$ if $\type(w)>0$. It follows
that $\kk$ is compatible with $\ov{w}$.

\medskip
\noin 
(b) $v(w) = (\cdots i \cdots \ov{i+1} \cdots)$.  In this case $w^{-1}(i)\in [1,k]$,
$D(w)=D(\ov{w})$, $\nu=\ov{\nu}$, $\be_p = -i$, $\ov{\be}_p=-i+1$, and
$\be_j=\ov{\be}_j$ for all $j\neq p$.

\medskip
\noin (c) $v(w) = (\ov{i} \cdots \ov{i+1} \cdots)$.  In this case
$w_1=\ov{i}$, $\type(w)=2$ if $i \geq 2$, $D(w)=D(\ov{w})$,
$\nu=\ov{\nu}$, $\be_p =-i$, $\ov{\be}_p = -i+1$, and
$\be_j=\ov{\be}_j$ for all $j\neq p$.

\medskip
\noin (d) $v(w)= (\cdots \ov{i+1} \cdots i \cdots)$. We distinguish
two subcases here: 

\medskip
\noin
Case (d1): $w_1\neq\ov{i+1}$. Then $\nu=\ov{\nu}$, $\be_p=-i$,
$\ov{\be}_p=-i+1=\beta_p+1$, and $D(w)=D(\ov{w})\cup\{(p,q)\}$, where
$v(w)_{k+p}=\ov{i+1}$ and $v(w)_{k+q}=i$. It follows that $\be_q=i$
and $\ov{\be}_q=i+1 = \be_q+1$, while $\be_j=\ov{\be}_j$ for all $j
\notin \{p,q\}$. 
 
\medskip
\noin Case (d2): $w_1=\ov{i+1}$ and we have $w^{-1}(i)>k$. In this
case $\type(w)=2$, $D(w)=D(\ov{w})$, while clearly $\mu=\ov{\mu}$.
We deduce that $\ov{\nu}_p=\nu_p-1$, while $\ov{\nu}_j=\nu_j$ for all
$j\neq p$.  We must show that $\be_p =i$, and hence $\ov{\be}_p=i+1$,
while $\be_j=\ov{\be}_j$ for all $j\neq p$.  Indeed, observe that
$\nu(w)=\nu(\iota(w))$, and $\iota(w)_1=i+1$, the argument used in
case (a) applies; this is true even when $i=1$.

\medskip

Next, we consider the four cases where $i=\Box$. 

\medskip
\noin (e) $v(w) = (\wh{1} \cdots \ov{2} \cdots)$.  In this case
$\type(w)=0$, $D(w)=D(\ov{w})$, $\nu=\ov{\nu}$, $\be_p =-1$, and
$\ov{\be}_p=1$.  We also have $\be_j=\ov{\be}_j$ for all $j\neq p$.

\medskip
\noin (f) $v(w) = (\ov{2} \cdots \ov{1} \cdots)$.  In this case
$\type(w)=2$, $D(w)=D(\ov{w})$, and $\mu=\ov{\mu}$. We deduce that
$\ov{\nu}_p=\nu_p-1$, while $\ov{\nu}_j=\nu_j$ for all $j\neq p$.  We
must show that $\be_p=0$, and hence $\ov{\be}_p =2$, while
$\ov{\be}_j=\be_j$ for all $j\neq p$. Indeed, we have $\iota(v(w))= (2
\cdots 1 \cdots)$, so the analysis in case (a) applies.

\medskip
\noin (g) $v(w) = (\cdots \ov{2} \ov{1} \cdots)$, with $|w_1|>2$.  In
this case $\type(w)$ and $\type(\ov{w})$ are both positive, $\nu=\ov{\nu}$, and 
$D(w)=D(\ov{w})\cup\{(p,p+1)\}$, where $v(w)_{k+p}=\ov{2}$
and $v(w)_{k+p+1}=\ov{1}$, and thus $p=\ell(\mu)=m$. It follows that 
$\be_p=-1$, $\be_{p+1}=0$, $\ov{\be}_p=1$,
$\ov{\be}_{p+1}=2$, and $\be_j=\ov{\be}_j$ for all $j \notin
\{p,p+1\}$. We also have $\la_m=k+\xi_m+1$, $\la_{m+1}=k+\xi_{m+1}=\la_m-1$.

\medskip
\noin
(h) $v(w) = (2 \cdots \ov{1} \cdots)$. In this case $\type(w)=1$,
$D(w)=D(\ov{w})$, and $\mu=\ov{\mu}$. We deduce that
$\ov{\nu}_p=\nu_p-1$, while $\ov{\nu}_j=\nu_j$ for all $j\neq p$. We must 
show that $\be_p=0$, so that $\ov{\be}_p=2$, and $\be_j=\ov{\be}_j$ for all $j\neq p$.
This is proved as in case (f).

\medskip

To simplify the notation, set
$c^{\upsilon}_{\al}:={}^{\kk}c^{\upsilon}_\al$ and
$\wh{c}^{\,\upsilon}_{\al}:={}^{\kk}\wh{c}^{\,\upsilon}_\al$, for any
integer sequences $\al$ and $\upsilon$. We now distinguish the
following cases.

\medskip
\noin
{\bf Case 1.} $\type(w)=\type(\ov{w})=0$.

\medskip
Note that we have $|w_1|=|\ov{w}_1|=1$, and hence $i\geq 2$ and
$\ell(\mu) = \ell(\ov{\mu})$. We must be in one among cases (a), (b),
or (d1) above.  In cases (a) or (b), it follows from
Propositions \ref{prop1new} and \ref{prop1hnew} and the left Leibnitz rule that
for any integer sequence $\al=(\al_1,\ldots,\al_\ell)$, we have
\begin{align*}
\partial_i^y\, \wh{c}^{\,\be(w)}_\al &=
\wh{c}^{\,(\be_1,\ldots,\be_{p-1})}_{(\al_1,\ldots,\al_{p-1})}
\left(\partial_i(\wh{c}^{\,\be_p}_{\al_p})\,\wh{c}^{\,(\be_{p+1},\ldots,\be_\ell)}_{(\al_{p+1},\ldots,\al_\ell)}
+s^y_i(\wh{c}^{\,\be_p}_{\al_p})\,\partial_i
(\wh{c}^{\,(\be_{p+1},\ldots,\be_\ell)}_{(\al_{p+1},\ldots,\al_\ell)})\right)
\\ &=\wh{c}^{\,(\be_1,\ldots,\be_{p-1})}_{(\al_1,\ldots,\al_{p-1})}
\left(\wh{c}^{\,\be_p+1}_{\al_p-1}\,\wh{c}^{\,(\be_{p+1},\ldots,\be_\ell)}_{(\al_{p+1},\ldots,\al_\ell)}
+0\right)
=\wh{c}^{\,(\be_1,\ldots,\be_p+1,\ldots,\be_\ell)}_{(\al_1,\ldots,\al_p-1,\ldots,\al_\ell)}
= \wh{c}^{\,\be(\ov{w})}_{\al-\epsilon_p}.
\end{align*}
Since $\la-\epsilon_p=\ov{\la}$, it follows that if $R$ is any raising
operator, then
\[
\partial^y_i \left(R \star\wh{c}^{\,\be(w)}_\la\right)
= \partial^y_i \left(\ov{c}^{\be(w)}_{R\la}\right) = 
\ov{c}^{\be(\ov{w})}_{R\la-\epsilon_p} = R \star\wh{c}^{\,\be(\ov{w})}_{\ov{\la}}.
\]
As $R^{D(w)}= R^{D(\ov{w})}$, we deduce that 
\[
\partial^y_i(F_w)= 2^{-\ell(\mu)}\partial^y_i \left(R^{D(w)} \star
\wh{c}^{\,\be(w)}_\la\right) = 2^{-\ell(\ov{\mu})} R^{D(\ov{w})} \star
\wh{c}^{\,\be(\ov{w})}_{\ov{\la}} = F_{\ov{w}}.
\]
In case (d1), for any integer sequence $\al=(\al_1,\ldots,\al_\ell)$,
we compute that
\begin{align*}
\partial^y_i\, \wh{c}^{\,\be(w)}_\al &= \partial^y_i\,
\wh{c}^{\,(\be_1,\ldots,-i,\ldots,i,\ldots,\be_{\ell})}_{(\al_1,\ldots,\al_p,\ldots,\al_q,
  \ldots,\al_\ell)} \\ &=
\wh{c}^{\,(\be_1,\ldots,-i+1,\ldots,i+1,\ldots,\be_{\ell})}_{(\al_1,\ldots,\al_p-1,\ldots,\al_q,\ldots,\al_\ell)}
+
\wh{c}^{\,(\be_1,\ldots,-i+1,\ldots,i+1,\ldots,\be_{\ell})}_{(\al_1,\ldots,\al_p,\ldots,\al_q-1,\ldots,\al_\ell)}
= \wh{c}^{\,\be(\ov{w})}_{\al-\epsilon_p} + \wh{c}^{\,\be(\ov{w})}_{\al-\epsilon_q}.
\end{align*}
This follows from the left Leibnitz rule, as in the proof of Proposition
\ref{prop1new}(b). Since $i\geq 2$, we must have $q>\ell(\mu)$. Hence
if $R$ is any raising operator, then $q\notin\supp_m(RR_{pq})$,
where $m=\ell(\mu)$. As $\la-\epsilon_p=\ov{\la}$, we deduce that
\[
\partial^y_i\left(R \star \wh{c}^{\,\be(w)}_\la\right) = 
\partial^y_i \left(\ov{c}^{\be(w)}_{R\la}\right) = 
\ov{c}^{\be(\ov{w})}_{R\la-\epsilon_p} + \ov{c}^{\be(\ov{w})}_{R\la-\epsilon_q}
=  R \star \wh{c}^{\,\be(\ov{w})}_{\ov{\la}} + RR_{pq} \star \wh{c}^{\,\be(\ov{w})}_{\ov{\la}}.
\]
Since $R^{D(w)}+R^{D(w)} R_{pq} = R^{D(\ov{w})}$, it follows that
$\partial^y_i F_w = F_{\ov{w}}$.

\medskip
\noin
{\bf Case 2.} $\type(w)=0$ and $\type(\ov{w})>0$.

\medskip
In this case, we have $|w_1|=1$ and $|\ov{w}_1|>1$, so $i\in \{\Box, 1\}$.  We
must be in one of cases (b), (c), or (e) above, hence
$D(w)=D(\ov{w})$. We also have $(p,p+1)\notin D(w)$, $\ell(\mu) = \ell(\ov{\mu})+1$, 
$\be_p=-1$, $\ov{\be}_p=0$ if $i=1$, and $\ov{\be}_p=1$ if $i=\Box$.

Observe that for any integer sequence
$\al=(\al_1,\ldots,\al_\ell)$, we have
\begin{align*}
\partial^y_i \wh{c}^{\,\be(w)}_\al &=
\wh{c}^{\,(\be_1,\ldots,\be_{p-1})}_{(\al_1,\ldots,\al_{p-1})}
\left(\partial^y_i(\wh{c}^{\,-1}_{\al_p})\,
\wh{c}^{\,(\be_{p+1},\ldots,\be_\ell)}_{(\al_{p+1},\ldots,\al_\ell)}
+s^y_i(\wh{c}^{\,-1}_{\al_p})\,\partial_i^y
(\wh{c}^{\,(\be_{p+1},\ldots,\be_\ell)}_{(\al_{p+1},\ldots,\al_\ell)})\right)
\\ &=
\wh{c}^{\,(\be_1,\ldots,\be_{p-1})}_{(\al_1,\ldots,\al_{p-1})}\,\partial^y_i(\wh{c}^{\,-1}_{\al_p})
\,\wh{c}^{\,(\be_{p+1},\ldots,\be_\ell)}_{(\al_{p+1},\ldots,\al_\ell)}.
\end{align*}
We now compute using Propositions \ref{prop1new} and \ref{prop1hnew}(a) that
\[
\partial_1^y\left({}^r\wh{c}_q^{\,-1}\right)= 
\begin{cases}
2\left({}^ra_{q-1}^0\right) & \text{if $q\neq r+1$} \\
2f_r & \text{if $q=r+1$}.
\end{cases}
\]
Propositions \ref{prop1new}(a) and \ref{prop1hnew} give
\[
\partial^y_\Box\left({}^r\wh{c}_q^{\,-1}\right)=  
\begin{cases}
2\left({}^ra_{q-1}^1\right) & \text{if $q\neq r+1$} \\
2\wt{f}^1_r & \text{if $q=r+1$}.
\end{cases}
\]
Note that the choice of $f_r$ in these equations is specified by 
formula (\ref{f_requ}). The rest is straightforward from the
definitions, arguing as in Case 1.

\medskip
\noin
{\bf Case 3.} $\type(w)>0$ and $\type(\ov{w})=0$. 

\medskip
We have $|w_1|>1$ and $|\ov{w}_1|=1$, so $i\in\{\Box,1\}$, and we are in one of
cases (a), (d2), (f), or (h) above, hence $D(w)=D(\ov{w})$.

We also have $\be_p\in \{0,1\}$, $\ov{\be}_p=2$, and
$\ell(\mu) = \ell(\ov{\mu})$. Recall that ${}^r\wh{c}_q^{\,s} = {}^rc_q^s$ whenever
$q\leq r$, ${}^rb_r^1={}^rc_r^1-{}^r\wt{b}_r$, ${}^r\wt{b}_r^1={}^rc_r^1-{}^rb_r$,
and $\dis {}^ra^s_q={}^rc^s_q-\frac{1}{2} {}^rc_q$.  We deduce the calculations
\begin{gather*}
\partial^y_\Box \left({}^rb_r\right) = \partial^y_\Box \left({}^r\wt{b}_r\right) = 
\partial^y_1 \left({}^rb^1_r\right) = \partial^y_1\left(
{}^r\wt{b}_r^1\right) = {}^rc_{r-1}^2 \\ 
\partial^y_\Box \left({}^ra^0_q\right) = \partial^y_1 \left({}^ra_q^1 \right)
= {}^rc_{q-1}^2.
\end{gather*}
As in the previous cases, it follows that 
$\partial^y_i F_w = F_{\ov{w}}$.

\medskip
\noin
{\bf Case 4.} $\type(w)=\type(\ov{w})>0$.

\medskip
We have $|w_1|>1$ and $|\ov{w}_1|>1$. If $i\geq 2$, then we must be in
one of cases (a), (b), (c), or (d1) above, and the result is proved by
arguing as in Case 1. It remains to study (i) case (d1) with
$v(w)=(\cdots \ov{2}1 \cdots)$ and $i=1$, or (ii) case (g) with $v(w)
= (\cdots \ov{2} \ov{1} \cdots)$ and $i=\Box$. In both of these
subcases, we have $p=m$, $\ell(\mu) =
\ell(\ov{\mu})+1$, $D(w)=D(\ov{w})\cup\{(m,m+1)\}$, $\be_m(w)=-1$, $\ov{\be}_{m+1}=2$, and
$\be_j=\ov{\be}_j$ for all $j \notin \{m,m+1\}$. In subcase (i), we
have $\be_{m+1}=1$ and $\ov{\be}_m=0$, while in subcase (ii), we have
$\be_{m+1}=0$ and $\ov{\be}_m=1$. Finally, we have $\la_m=k+1+\xi_m$
and $\la_{m+1}=k+\xi_{m+1}=\la_m-1$, since the assumption
$\nu_m=\nu_{m+1}$ implies that $\xi_m=\xi_{m+1}$.  

The rest of the argument now follows the proof of \cite[Prop.\ 5]{T4},
by studying the effect of the raising operators $R$ in the expansion
of $R^{D(w)}$ which involve only basic operators $R_{ij}$ with $i\in
\{m,m+1\}$ or $j\in \{m,m+1\}$.  We first assume that $\la$ has length
$m+1$, let $r:=\la_{m+1}=\la_m-1$, and use \cite[Prop.\ 3]{T4} and the
key relations
\[
f_r\wt{f}_r + 2\sum_{j=1}^r (-1)^j \left({}^ra^0_{r+j}
{}^ra^0_{r-j}\right) = \wt{f}^1_rf^1_r + 2\sum_{j=1}^r (-1)^j
\left({}^ra^1_{r+j} {}^ra^1_{r-j}\right) = 0
\]
in $\Gamma'[X,Y]$, which are easily deduced from the relations
(\ref{fundrelD}), as in op.\ cit. Finally, if $\ell(\la)>m+1$, similar
arguments show that the contribution of all the residual terms in that
appear with a negative sign in \cite[Prop.\ 3]{T4} vanishes.
\end{proof}

\begin{defn}
\label{properdef}
An element $w\in \wt{W}_\infty$ is called {\em proper} if (i) $|w_1|\leq 2$, 
or (ii) $|w_1|>2$ and $w_j=2$ implies $j>2>w_{j-1}$.
\end{defn}

\begin{lemma}
\label{validlem}
Fix an integer $k\geq 1$.  We say that an element of $\wt{W}_n$ is
{\em valid} if $w$ is increasing up to $k$, has $k$-truncated A-code a
partition $C$, and is proper. Let $w(C)$ be the longest valid element
in $\wt{W}_n$, which has type 0. If $\ov{w}$ is valid and $\ov{w}\neq
w(C)$, then there exists a valid $w\in \wt{W}_n$ such that
$\ell(w)=\ell(\ov{w})+1$ and $v(\ov{w})=s_iv(w)$ for some $i\in
\N_\Box$.
\end{lemma}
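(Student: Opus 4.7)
The plan is to exploit the length-preserving bijection $w\mapsto v(w)$ of the preamble, which identifies valid elements with truncated A-code $C$ with those $k$-Grassmannian $v\in\wt{W}_n$ whose corresponding $w$ is proper. Since $\ell(w)=\ell(v(w))+|C|$, the conclusion $\ell(w)=\ell(\ov{w})+1$ together with $v(\ov{w})=s_iv(w)$ translates into producing $i\in\N_\Box$ such that $v(w):=s_iv(\ov{w})$ is $k$-Grassmannian of length $\ell(v(\ov{w}))+1$ and the associated $w$ is proper. As a first step, I would pin down $v(w(C))$: since $|w_1|\leq 2$ makes $w$ automatically proper and length is maximized by packing as many sign changes as possible into the tail, the longest valid element is of type $0$, with
\[
v(w(C))=(\wh{1},2,3,\ldots,k,\ov{n},\ov{n-1},\ldots,\ov{k+1}),
\]
where $\wh{1}\in\{1,\ov{1}\}$ is chosen to make the total number of sign changes even; a direct length count shows this beats every type-$1$ or type-$2$ candidate by exactly $k$, confirming that $w(C)$ has type $0$.

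For the main step, I would appeal to the fact that the $k$-Grassmannian elements of $\wt{W}_n$ are indexed by typed $k$-strict partitions under containment, with $v(w(C))$ corresponding to the unique maximum of the type-$0$ stratum. Since $v(\ov{w})$ lies strictly below $v(w(C))$ in the resulting left weak Bruhat order, there is an element covering $v(\ov{w})$ in that order that is still bounded above by $v(w(C))$, realized by $v'=s_iv(\ov{w})$ for some $i\in\N_\Box$, with $v'$ being $k$-Grassmannian of length $\ell(v(\ov{w}))+1$. Defining $w$ as the unique element with $v(w)=v'$ and truncated A-code $C$, only propriety of $w$ remains to be checked.

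Propriety is automatic whenever $|w_1|=|v'_1|\leq 2$, which handles most cases. The delicate subcase is $|v'_1|>2$, and this is where the main obstacle lies. Here I would use Lemma \ref{WDlem} to argue that, among the admissible upward covers from $v(\ov{w})$, one can choose $s_i$ so that the position of $2$ in the tail of $w$ is undisturbed with preceding entry still less than $2$, or else so that $2$ is removed from $w$ altogether. More precisely, the potentially dangerous reflections $s_1$ and $s_\Box$ that could relocate $2$ are either left descents of $v(\ov{w})$, hence not upward moves, or are upward moves that replace $2$ by $1$ or $\ov{1}$ in $w$, making the proper condition vacuous; for $i\geq 2$, a box-adding cover can always be chosen that leaves the tail position of $2$ intact. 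This subcase is precisely the phenomenon exhibited in Example \ref{321ex} that motivated the introduction of the proper condition.
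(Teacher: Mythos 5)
Your high-level strategy — transport the problem to the poset of $k$-Grassmannian elements via the bijection $w\mapsto v(w)$, invoke the weak order to produce a cover, and then control propriety — is a reasonable reformulation of what the paper does (the paper's proof picks an explicit $w=s_i\ov{w}$ in each case, and Lemma~\ref{Dcases} translates that into exactly the covering relation you work with). However, the execution has a genuine gap in the propriety check for the subcase $|w_1|>2$.

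First, the claim that ``for $i\geq 2$, a box-adding cover can always be chosen that leaves the tail position of $2$ intact'' is false as stated: $s_2$ swaps the values $2\leftrightarrow 3$ and $\ov{2}\leftrightarrow\ov{3}$, so it can certainly relocate or create a $2$ in $w$. You have not argued that a cover with $i\geq 3$ exists when you need one, nor that a cover with $i=2$ cannot spoil propriety. Second, and more fundamentally, you never actually show that an upward cover producing a proper $w$ exists when $|\ov{w}_1|>2$. Your disjunction for $s_1$ and $s_\Box$ (``left descents, or upward moves that replace $2$ by $\wh{1}$'') is correct as far as it goes, but the fallback branch is not justified, and you have not ruled out the possibility that every cover lands you on an improper $w$.

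The missing ingredient is precisely the combinatorial observation that drives the paper's Case~3: if $|\ov{w}_1|>2$ and $\ov{w}_j=2$, then propriety of $\ov{w}$ gives $j>2$ and $\ov{w}_{j-1}<2$, and since the truncated A-code is a partition the tail $(\ov{w}_{k+1},\ldots,\ov{w}_n)$ is $132$-avoiding; combining these forces the occurrence $\ov{w}_i=\wh{1}$ to satisfy $i<j$. This is what determines the cover unambiguously: when $\wh{1}=\ov{1}$ one takes $w:=s_1\ov{w}$, when $\wh{1}=1$ one takes $w:=s_\Box\ov{w}$, and in both cases the value $2$ is sent to $\wh{1}$ (so $2$ does not appear in $w$ and propriety is vacuous), while Lemma~\ref{WDlem} shows the chosen reflection is not a left descent. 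Without this argument your proof does not establish existence, only plausibility; supplying it would essentially collapse your poset-theoretic reformulation back to the paper's direct case analysis on $|\ov{w}_1|\in\{1,2\}$ versus $|\ov{w}_1|>2$.
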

\begin{proof}
We distinguish the following cases for $\ov{w}$:

\medskip
\noin Case 1: $|\ov{w}_1|=1$. Let $w$ be any element with the same
truncated A-code $C$ such that $\ell(w)=\ell(\ov{w})+1$ and
$v(\ov{w})=s_iv(w)$ for some $i\in \N_\Box$.

\medskip
\noin
Case 2: $|\ov{w}_1|=2$. If $\ov{w}=(\wh{2} \cdots 1 \cdots)$, then let $w:=s_\Box\ov{w}$, while
if $\ov{w}=(\wh{2} \cdots \ov{1}\cdots)$, then let $w:=s_1\ov{w}$.

\medskip
\noin Case 3: $|\ov{w}_1|>2$. If $\ov{w}_j=2$ then $j>2$ and $\ov{w}_{j-1}<2$. 
Since the A-code $C$ is a partition, the sequence
$(\ov{w}_{k+1},\ldots,\ov{w}_n)$ is 132-avoiding. We deduce that if $\ov{w}_i=\wh{1}$, then $i<j$.
 If $\ov{w}= (\cdots \ov{1} \cdots 2
\cdots)$, then let $w:=s_1\ov{w}$, while if $\ov{w}= (\cdots 1 \cdots
2 \cdots)$, then let $w:=s_\Box\ov{w}$.

\medskip
In all three cases, the element $w$ satisfies the required conditions.
\end{proof}

\begin{cor}
\label{kdominD}
Suppose that $w\in \wt{W}_n$ is increasing up to $k\geq 1$, proper,
and the $k$-truncated A-code ${}^k\gamma$ is a partition. Let
$k^{n-k}+\xi(w)=(k+\xi_1,\ldots,k+\xi_{n-k})$.  Then we have
\begin{equation}
\label{directappD}
\DS_w = 2^{-\ell(\mu(w))}\,R^{D(w)} \star {}^{k^{n-k}+\xi(w)}\wh{c}^{\,\beta(w)}_{\la(w)}.
\end{equation}
\end{cor}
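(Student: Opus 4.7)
The plan is to derive Corollary \ref{kdominD} from Proposition \ref{domnD} in a manner parallel to the type C derivation of Corollary \ref{kleadingN} from Proposition \ref{leadingN}, with Lemma \ref{validlem} replacing the fact that $k$-Grassmannian elements form an ideal for the left weak Bruhat order.

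First I would establish the base case. For any partition $C$ that can serve as a $k$-truncated A-code, the longest valid element $w(C)$ of Lemma \ref{validlem} takes the form $(\wh{1}, 2, \ldots, k, w_{k+1}, \ldots, w_n)$ with each $w_{k+j}<0$, and is therefore an element to which Proposition \ref{domnD} applies. A direct computation of the A-code shows that $\gamma_i = n-k$ for every $i\leq k$, so $\nu(w(C))_p = k + \xi(w(C))_p$ for all $p$ in the relevant range. Since $w(C)$ has type $0$ with $\ell(\mu(w(C))) = n-k$, the exponent sequence $(1-n, 2-n, \ldots, -k)$ of Proposition \ref{domnD} coincides with $\beta(w(C))$ and the normalizing factor $2^{k-n}$ equals $2^{-\ell(\mu(w(C)))}$. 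Thus the two formulas agree at $w = w(C)$.

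Next I would proceed by descending induction on $\ell(w(C)) - \ell(\ov{w}) \geq 0$, where $\ov{w}$ ranges over valid elements with the fixed $k$-truncated A-code $C$. The base case $\ov{w} = w(C)$ was handled above. For the inductive step, Lemma \ref{validlem} produces a valid $w$ with $\ell(w) = \ell(\ov{w})+1$ and $v(\ov{w}) = s_i v(w)$ for some $i \in \N_\Box$, and Lemma \ref{Dcases} then gives $\ov{w} = s_i w$. The induction hypothesis supplies the formula for $w$, and Proposition \ref{gampropDN}, applied with the sequence $\kk := k^{n-k} + \xi$ (which depends only on $C$), transports it to $\ov{w}$.

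The main obstacle is verifying the hypotheses of Proposition \ref{gampropDN} at each step: that $\kk$ is compatible with the current $w$, and that $\kk_m = \kk_{m+1}$ whenever $i \in \{\Box, 1\}$ and $|w_1| > 2$. Compatibility of $\kk$ propagates along the chain because the proposition itself guarantees that compatibility with $w$ implies compatibility with $\ov{w}$, and the base case $w(C)$ is compatible by the computation above. The auxiliary condition $\kk_m = \kk_{m+1}$ reduces to $\xi_m = \xi_{m+1}$, and this is where the properness of $w$ plays its essential role: when $i \in \{\Box, 1\}$ and $|w_1| > 2$, the descent must proceed through Case 3 of Lemma \ref{validlem}, and the structural restrictions imposed by properness on the positions of $\pm 1$ and $\pm 2$ force the required equality in $\xi$. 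This is precisely the point at which the pathology flagged in Example \ref{321ex} is circumvented, and explains why the \emph{proper} condition, absent in types A, B, and C, is indispensable in type D.
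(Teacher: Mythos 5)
Your proposal is correct and follows the paper's proof in every essential respect: base case via Proposition~\ref{domnD}, induction through the left weak order via Lemmas~\ref{validlem} and~\ref{Dcases}, and propagation by Proposition~\ref{gampropDN} with the fixed sequence $\kk = k^{n-k}+\xi$ together with the bootstrap of compatibility. The one point you assert rather than verify is the crucial step that $\xi_m = \xi_{m+1}$ whenever $i\in\{\Box,1\}$ and $|w_1|>2$; the paper's argument is that $\xi_m > \xi_{m+1}$ would produce an index $j>k$ with $\gamma_j=m$, and since ${}^k\gamma$ is a partition the sequence $(w_{k+1},\ldots,w_n)$ is $132$-avoiding, which forces $w_j = \wh{1}$ in a position such that $\ov{w}_j = 2$ with either $j=2$ or $\ov{w}_{j-1}>2$, contradicting the properness of $\ov{w}$ — so your high-level appeal to ``structural restrictions imposed by properness'' is the right intuition but should be replaced by this concrete verification.
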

\begin{proof}
If $w=(\wh{1},2,\ldots,k,w_{k+1},\ldots,w_n)$ with $w_{k+j}<0$ for all
$j\in [1,n-k]$, then (\ref{directappD}) follows from Proposition
\ref{domnD}. In this case, $v(w)=(\wh{1},2,\ldots,k,-n,\ldots,-k-1)$
is the longest $k$-Grassmannian element in $\wt{W}_n$. We deduce the
result in the general case from Proposition \ref{gampropDN} and Lemma
\ref{validlem}, using the fact that the $k$-Grassmannian elements of
$\wt{W}_n$ form an ideal for the left weak Bruhat order. Indeed, the
hypotheses required in Proposition \ref{gampropDN} are satisfied, as
long as $w$ and $\ov{w}$ are proper. The key point is to show that if
$i\in\{\Box,1\}$ and $|w_1|>2$, then $\xi_m=\xi_{m+1}$, which implies
that $\nu_m=\nu_{m+1}$, and hence $\kk_m=\kk_{m+1}$. For if not, then
$\xi_m>\xi_{m+1}$, so there exists a $j>k$ such that $\gamma_j=m$. As
${}^k\gamma$ is a partition, $(w_{k+1},\ldots,w_n)$ is a
$132$-avoiding sequence. It follows that $w_j=\wh{1}$, and furthermore
$j=2$, or $j>2$ and $w_{j-1}>w_j$. We conclude that $\ov{w}_j=2$ and
$\ov{w}$ is not proper, completing the proof.
\end{proof}

Notice that if $w$ (respectively $\ov{w}$) is increasing up to $k\geq
1$ and proper (respectively not proper) with truncated A-code equal to
a fixed partition $C$, such that $\ell(w)=\ell(\ov{w})+1$ and
$s_iv(w)=v(\ov{w})$, then $i=\Box$ or $i=1$. Moreover, referring to
equation (\ref{Feq}), in this situation we can have $\DS_w = F_w$ for
an integer sequence $\kappa$ compatible with $w$ but $\DS_{\ov{w}}
\neq F_{\ov{w}}$, as Corollary \ref{kdominD} and the following example
show, with $k=1$, $w=(3,\ov{1},\ov{2})$, and $\ov{w}=(3,2,1)$.

\begin{example}
\label{321ex}
Let $n:=3$ and $w:=(3,2,1)$. Then $\type(w)=1$, $\la=\nu=(2,1)$,
$\ell=2$, $\mu=0$, $m=0$, $k=1$, $\beta(w)=(1,2)$, $D(w)=\emptyset$,
and $w$ is not proper. We have ${}^1c_0^2 = 1$, while
\begin{gather*}
{}^2b_2^1 = {}^2b_2+{}^2c_1\, h_1^1(-Y)+h_2^1(-Y) \\
=\frac{1}{2}\left(c_2+c_1e_1^2(X)\right) +e_2^2(X)
+\left(c_1+e_1^2(X)\right)h_1^1(-Y)+h_2^1(-Y), \\
{}^1c_1^2 = c_1+e_1^1(X)+h_1^2(-Y),  \\
{}^2a^1_3 = \frac{1}{2}\left(c_3+c_2e_1^2(X)+c_1e_2^2(X)\right) +
{}^2c_2\,h_1^1(-Y) + {}^2c_1\,h_2^1(-Y)+{}^2c_0\,h^1_3(-Y) \\
= \frac{1}{2}c_3+c_2\left(\frac{1}{2}e_1^2(X)+h_1^1(-Y)\right)
+ c_1\left(\frac{1}{2}e_2^2(X)+e_1^2(X)h_1^1(-Y)+h_2^1(-Y)\right) \\
+ \,e_2^2(X)h_1^1(-Y)+e_1^2(X)h_2^1(-Y)+h_3^1(-Y).
\end{gather*}
One checks using the table of \cite[Sec.\ 13]{IMN} that 
\[
(1-R_{12})\star {}^{(2,1)}\wh{c}^{\,(1,2)}_{(2,1)}
={}^2b_2^1\, {}^1c_1^2 - {}^2a^1_3\, {}^1c_0^2 \neq \DS_{321}.
\]
Now consider $w':=\iota(w)=(\ov{3},2,\ov{1})$. Then $\type(w')=2$,
$\la=\nu=(2,1)$, $\ell=2$, $\mu=0$, $m=0$, $k=1$, $\beta(w')=(0,2)$,
$D(w')=\emptyset$, and $w'$ is not proper. We have
\begin{gather*}
{}^2b_2^0 = {}^2b_2 =\frac{1}{2}\left(c_2+c_1e_1^2(X)\right) +e_2^2(X), \ \
{}^2\wt{b}_2^0 = {}^2\wt{b}_2 = \frac{1}{2}\left(c_2+c_1e_1^2(X)\right),  \\
{}^2a^0_3 = \frac{1}{2} {}^2c^0_3 = \frac{1}{2}\left(c_3+c_2e_1^2(X)
+ c_1e_2^2(X)\right).
\end{gather*}
Using the table of \cite[Sec.\ 13]{IMN}, we observe that 
\[
(1-R_{12})\star {}^{(2,1)}\wh{c}^{\,(0,2)}_{(2,1)}
={}^2\wt{b}_2^0\, {}^1c_1^2 - {}^2a^0_3\, {}^1c_0^2 \neq \DS_{\ov{3}2\ov{1}}.
\]
Notice that we also have $\dis {}^2b_2^0\, {}^1c_1^2 -
{}^2a^0_3\, {}^1c_0^2 \neq \DS_{\ov{3}2\ov{1}}$.
\end{example}

\begin{defn}
Let $k\geq 1$ be the primary index of $w\in \wt{W}_n$, and
list the entries $w_{k+1},\ldots,w_n$ in increasing order:
\[
u_1<\cdots < u_{m'} < 0 < u_{m'+1} < \cdots < u_{n-k},
\]
where $m'\in\{m,m+1\}$.  We say that a simple transposition $s_i$ for
$i\geq 2$ is {\em $w$-negative} (respectively, {\em $w$-positive}) if
$\{i,i+1\}$ is a subset of $\{-u_1,\ldots,-u_m\}$ (respectively, of
$\{u_{m'+1}, \ldots,u_{n-k}\}$). Let $\sigma^-$ (respectively,
$\sigma^+$) be the longest subword of $s_{n-1}\cdots s_2$
(respectively, of $s_2\cdots s_{n-1}$) consisting of $w$-negative
(respectively, $w$-positive) simple transpositions.  A {\em
  modification} of $w\in \wt{W}_n$ is an element $\ome w$, where
$\ome\in S_n$ is such that $\ell(\ome w)=\ell(w)-\ell(\ome)$, and
$\ome$ has a reduced decomposition of the form $R_1\cdots R_{n-2}$
where each $R_j$ is a (possibly empty) subword of $\sigma^-\sigma^+$
and all simple reflections in $R_p$ are also contained in $R_{p+1}$,
for each $p<n-2$.
\end{defn}

\begin{defn}
\label{amenD}
Suppose that $w\in \wt{W}_n$ has primary index $k\geq 1$ and A-code
$\gamma$.  We say that $w$ is {\em leading} if $w$ is proper and
$(\gamma_{k+1}, \gamma_{k+2},\ldots, \gamma_n)$ is a partition.  We
say that $w$ is {\em amenable} if $w$ is a modification of a leading
element.
\end{defn}

\begin{remark}
The proper element $w\in \wt{W}_n$ of type 0 or 1 is leading if and
only if the A-code of the extended sequence $(0,w_1,w_2,\ldots,w_n)$
is unimodal. Indeed, if $\type(w)=0$ and $w_1=\ov{1}$, then this is
ensured since there is more than one negative entry in $w$.  If
$\type(w)=2$, then $w$ is leading if and only if it is proper and the
A-code of the extended sequence $(0,w'_1,w'_2,\ldots,w'_n)$ is
unimodal, where $w':=\iota(w)$.
\end{remark}

In the following we will assume that $w$ has primary index $k\geq 1$
and the partition $\xi$ is specified as in Definition \ref{Dtrunc}.
Let $\psi:=(\gamma_k,\ldots,\gamma_1)$, $\phi:=\psi'$, $\ell:=\ell(\la)$
and $m:=\ell(\mu)$. We then have
\begin{equation}
\label{pxmND}
\la = \phi+\xi+\mu
\end{equation}
and $\la_1>\cdots > \la_m > \la_{m+1}\geq \cdots \geq \la_{\ell}$.

\begin{defn}
Say that $\ci\in [1,\ell]$ is a {\em critical index} if $\beta_{\ci+1}>
\beta_\ci+1$, or $(\beta_\ci,\beta_{\ci+1})= (1,2)$, or if
$\la_\ci>\la_{\ci+1}+1$ (respectively, $\la_\ci>\la_{\ci+1}$) and $\ci\leq m$
(respectively, $\ci>m$).  Define two sequences $\f=\f(w)$ and $\g=\g(w)$
of length $\ell$ as follows.  For $1\leq j \leq \ell$, set
\[
\f_j:=k+\max(i\ |\ \gamma_{k+i}\geq j)
\]
and let $$\g_j:=\f_\ci+\beta_\ci-\xi_\ci-k,$$ where $\ci$ is the least
critical index such that $\ci\geq j$. We call $\f$ the {\em right flag}
of $w$, and $\g$ the {\em left flag} of $w$.
\end{defn}

We will show that for any amenable element $w$, $\f$ is a weakly
decreasing sequence, which consists of right descents of $w$, unless
$\f_j=1$ and $w_1<-|w_2|$, when $\f_j$ is a right descent of
$\iota(w)$. Moreover, $\g$ is a weakly increasing sequence, whose
absolute values consist of left descents of $w$, unless $\g_j=0$ and
$w=(\cdots \ov{1} \cdots 2 \cdots)$, or $\g_j=1$ and $w=(\cdots 1
\cdots 2 \cdots)$.

\begin{lemma}
\label{philemD}
{\em (a)} If $\beta_{s+1} > \beta_s+1$ and $(\beta_s,\beta_{s+1})\neq
(0,2)$, then $|\beta_s|$ is a left descent of $w$.

\medskip
\noin {\em (b)} If $s\leq m$ then $\phi_s=k$.  If $s>m$ and
$\beta_{s+1} = \beta_s+1$, then $\phi_s=\phi_{s+1}$.

\medskip
\noin
{\em (c)} If $\beta_s=0$ or $\beta_s=1$, then $s=m+1$, $\type(w)>0$, and $\phi_{m+1}=k$.
If $\beta_s=0$ then $\Box$ is a left descent of $w$, unless 
$w = (\cdots \ov{1} \cdots 2 \cdots)$. 
If $\beta_s=1$ then $1$ is a left descent of $w$, unless 
$w = (\cdots 1 \cdots 2 \cdots)$. 
\end{lemma}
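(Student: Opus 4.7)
The argument is a careful case analysis on the value of $\beta_s$, exploiting the fact that every element of $\{\pm 1,\ldots,\pm n\}$ appears in $w$ with exactly one sign, together with the primary index condition $|w_1|<w_2<\cdots<w_k$, which forces $w_2,\ldots,w_k$ to be positive and strictly increasing. For type~2 elements $w$, we first reduce to the type~1 case via the involution $\iota$: by definition $\gamma$, $\phi$, $\nu$, $\la$, $m$ are the same for $w$ and $\iota(w)$, and since $\iota$ interchanges $s_\Box$ with $s_1$, a left descent at $\Box$ of $w$ corresponds to a left descent at $1$ of $\iota(w)$; moreover, the tail entry $-1$ for $w$ becomes the entry $+1$ for $\iota(w)$, so the $\beta_s=0$ case of the lemma for $w$ translates to the $\beta_s=1$ case for $\iota(w)$, and the excluded configuration $w=(\cdots\ov{1}\cdots 2\cdots)$ translates to $\iota(w)=(\cdots 1\cdots 2\cdots)$. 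Thus we may assume $w$ has type $0$ or $1$.

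For part (a), set $i:=|\beta_s|$ and assume first $i\geq 1$. The hypothesis $\beta_{s+1}>\beta_s+1$ translates into the signed integer $u_s+1$ being absent from $\{u_{s+1},\ldots,u_{n-k}\}$, which for $i\geq 1$ is equivalent to the value $\pm i$ being missing from the tail with the particular sign dictated by $u_s$. By the pigeonhole principle, either $i$ appears elsewhere in $w$ with the opposite sign, or it is supplied by a position in $w_1,\ldots,w_k$; combined with the location of $\ov{i+1}$ (or $i+1$) furnished by $u_s$, the resulting pattern realizes one of the four forms in Lemma~\ref{WDlem}(b) and yields a left descent at $i$. The case $s>m'$ with $\beta_s=i>0$ is symmetric, with $u_s=i$ in the tail and $\pm(i+1)$ located by the same pigeonhole argument. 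The only excluded pair $(\beta_s,\beta_{s+1})=(0,2)$ corresponds to $\ov{1}$ and $2$ both lying in the tail with $\ov{1}$ before $2$; this is the unique configuration with $\beta_s=0$ in which none of the three left-descent forms at $\Box$ of Lemma~\ref{WDlem}(a) is realized, and it is handled by part (c).

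Part (b) is then structural. For (b)(i), when $s\leq m$ every $w_j$ with $j\in[1,k]$ exceeds all $m'$ negative tail entries: for $j\geq 2$ this is immediate from $w_j\geq 2>0$, and for $j=1$ the only delicate case $w_1=\ov{1}$ forces $\ov{1}$ out of the tail so that $m'=m$, whence $\gamma_j\geq m'\geq m\geq s$ and $\phi_s=k$. For (b)(ii), if $s>m$ and $\beta_{s+1}=\beta_s+1$ then $u_s$ and $u_{s+1}$ are consecutive integers; any $j\in[1,k]$ with $\gamma_j=s$ would force $u_s<w_j<u_{s+1}$, which is impossible, so $\phi_s=\phi_{s+1}$. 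For part (c), $\beta_s=0$ forces $u_s=-1$ (hence $s=m+1$ and $m'=m+1$), and $\beta_s=1$ forces $u_s=1$ (hence $s=m+1$ and $m'=m$); in either sub-case $|w_1|\geq 2$, i.e.\ $\type(w)>0$, and the argument of (b)(i) immediately gives $\phi_{m+1}=k$. Finally, to identify the left descent, we locate the unique copy of $2$ or $\ov{2}$ in $w$ and check the resulting configuration against Lemma~\ref{WDlem}; every case produces the expected left descent at $\Box$ (resp.\ at $1$) except the two patterns $w=(\cdots\ov{1}\cdots 2\cdots)$ and $w=(\cdots 1\cdots 2\cdots)$. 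The main obstacle is the combinatorial bookkeeping of where $\pm 1,\pm 2$ can reside under the primary index constraint; this is cleanly organized by the $\iota$-reduction and by splitting tail positions into those before and those after $u_s$.
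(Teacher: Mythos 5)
Your proof is correct and takes essentially the same approach as the paper's: a case analysis on where $\pm i$ and $\pm(i+1)$ can sit in $w$, checked against the left-descent patterns of Lemma~\ref{WDlem}, together with the structural computations of $\phi$ from the constraint $|w_1|<w_2<\cdots<w_k$. The only cosmetic difference is that you make the reduction to types $0$ and $1$ via $\iota$ explicit, whereas the paper leaves this implicit through the convention $\gamma(w):=\gamma(\iota(w))$ for type $2$ elements.
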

\begin{proof}
Let $i:=|\beta_s|$, and suppose that $1\leq s\leq m$. If $\beta_{s+1}
> \beta_s+1\neq 1$ then $i\geq 1$ is a left descent of $w$. Indeed, if
$w_1=-i$ then $i$ is a left descent of $w$, while if $w_1\neq -i$,
then $w^{-1}(i)>0$ and $w^{-1}(i+1)<0$, so this is clear.
 If $\type(w)\neq 2$, then $w_j \geq
-1$ for all $j\in [1,k]$, and hence $\psi_j\geq m$ for all $j\in
[1,k]$, and so $\phi_s=k$. 

Next suppose that $s>m$. If $\beta_{s+1} > \beta_s+1\neq 1$, then we have
$w^{-1}(i+1)<0$ or $w_j=i+1$ for some $j\in [1,k]$. In either case, it
is clear that $i$ is a left descent of $w$. Assume that
$\beta_{s+1} = \beta_s+1$, so that $\beta_s=u_s\geq 1$.
If $\phi_s>\phi_{s+1}$, there must exist
$j\in [1,k]$ such that $\gamma_j=s$, that is, $\#\{r>k\ |\ w_r<|w_j|\} =
s$. We deduce that 
\[
\{w_r\ |\ r>k \ \,\text{and}\, \ w_r<|w_j|\} =
\{u_1,\ldots, u_s\}, 
\]
which is a contradiction, since $u_s<|w_j| \Rightarrow
u_{s+1}=u_s+1<|w_j|$, for any $j\in [1,k]$.  This completes the proof
of (a) and (b) except in the case $\beta_s=0$, which is dealt with
below.

If $\beta_s=0$ or $\beta_s=1$, then clearly $s=m+1$ and $\type(w)>0$.
We have $\psi_j\geq m+1$ for all $j\in [1,k]$, and hence
$\phi_{m+1}=k$. If $\beta_s=0$, then since $w_{m+1}=\ov{1}$ we see that
$\Box$ is a left descent of $w$, unless $w = (\cdots \ov{1}
\cdots 2 \cdots)$. Finally, if $\beta_s=1$, then since $w_{m+1}=1$, it
is clear that $1$ is a left descent of $w$, unless $w = (\cdots 1
\cdots 2 \cdots)$.
\end{proof}

\begin{prop}
\label{amenableD}
Suppose that $\wh{w}\in \wt{W}_n$ is leading with primary index $k\geq
1$, let $\wh{\la}:=\la(\wh{w})$, and $\wh{\xi}:=\xi(\wh{w})$.  Let
$w=\ome\wh{w}$ be a modification of $\wh{w}$, and set
$\gamma:=\gamma(w)$, $\la:=\la(w)$, $\beta:=\beta(w)$, and
$\xi:=\xi(w)$.  Then the sequence $\beta+\wh{\la}-\la$ is weakly
increasing, and
\[
\DS_w = 2^{-\ell(\mu(w))} \,R^{D(w)} \star {}^{k^{n-k}+\wh{\xi}}\wh{c}^{\,\beta(w)+\wh{\xi}-\xi}_{\la(w)} =
2^{-\ell(\mu(w))} \,R^{D(w)} \star {}^{k^{n-k}+\wh{\xi}}\wh{c}^{\,\beta+\wh{\la}-\la}_{\la}.
\]
If $\ci\in [1,\ell]$ is a critical index of $w$, then $k+\wh{\xi}_\ci$
is a right descent of $w$, unless $k+\wh{\xi}_\ci=1$ and $w_1<-|w_2|$,
when $k+\wh{\xi}_\ci$ is a right descent of $\iota(w)$.  The absolute
value of $g_\ci:=\beta_\ci+\wh{\xi}_\ci-\xi_\ci$ is a left descent of
$w$, unless $g_\ci=0$ and $w=(\cdots \ov{1} \cdots 2 \cdots)$, or
$g_\ci=1$ and $w=(\cdots 1 \cdots 2 \cdots)$. Moreover, we have
$\wh{\xi}_\ci=\max(i\ |\ \gamma_{k+i}\geq \ci)$.
\end{prop}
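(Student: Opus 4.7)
The plan is to follow the type C argument of Proposition \ref{amenableC}, starting from the leading-element formula in Corollary \ref{kdominD} and applying a sequence of left divided differences, but with extra care for the $\star$ operation of Definition \ref{Etadef} and the exceptional behaviors that occur when $\beta$ crosses through the values $0,1,2$, and for the involution $\iota$ when $\type(w)=2$. First, by Corollary \ref{kdominD} we have
\[
\DS_{\wh{w}} = 2^{-\ell(\wh{\mu})}\, R^{D(\wh{w})} \star {}^{k^{n-k}+\wh{\xi}}\wh{c}^{\beta(\wh{w})}_{\la(\wh{w})},
\]
so the formula is established for leading elements. If $\ome$ has canonical reduced decomposition $R_1\cdots R_{n-1}$, then $\DS_w = \partial^y_{\ome^{-1}}\DS_{\wh{w}}$ by repeated application of \eqref{Dddeq}. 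I will verify, step by step, that each such left divided difference transforms the right hand side in the expected way. The key input is the type D Leibnitz rule \eqref{LeibRND} combined with Propositions \ref{prop1new}(a) and \ref{prop1hnew}, exactly as in cases (a)--(h) of Proposition \ref{gampropDN}; however, since the $w$-positive and $w$-negative simple transpositions used in a modification all have index $i\geq 2$, we are always in the ``straightforward'' regime of those propositions, and the hats in $\wh{c}$ pass through untouched. The outcome is the claimed identity
\[
\DS_w = 2^{-\ell(\mu(w))}\, R^{D(w)} \star {}^{k^{n-k}+\wh{\xi}}\wh{c}^{\beta+\wh{\la}-\la}_{\la}.
\]

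The second task is to establish that $\beta+\wh{\la}-\la$ is weakly increasing, and to verify the descent statements at each critical index $\ci$. Following the blueprint of Proposition \ref{amenableC}, I view $\ome$ as acting on the A-code $\wh{\gamma}$ of $\wh{w}$ via a sequence of $[i,j]$-moves; the positive and negative transpositions act independently because $\sigma^+$ and $\sigma^-$ commute and have disjoint supports. Write $\{p_1>\cdots>p_t\}$ for the nonzero parts of ${}^k\wh{\gamma}$, with multiplicities $n_1,\ldots,n_t$, and set $d_j:=n_1+\cdots+n_j$. Just as in type C, the right descents of $\wh{w}$ are precisely $k,k+d_1,\ldots,k+d_t$, so the set of components of $k^{n-k}+\wh{\xi}$ coincides with the set of right descents of $\wh{w}$. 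Then one checks by induction on the number of $[i,j]$-moves that after every intermediate element $v'$ obtained from $\wh{w}$ by $[i,j]$-moves, $k+\max(h\mid \al'_h\geq\ci)$ is a right descent of $v'$ (this uses Lemma \ref{Mcdlemma}, which applies equally in type D at indices $\geq 1$), and $\wh{\xi}_\ci=\max(h\mid \al'_h\geq \ci)$.

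For the left descents the analysis divides into three regimes: the ``purely negative'' part $\ci<m$, the ``purely positive'' part $\ci>m$, and the ``seam'' $\ci=m$ or $\ci=m+1$. In the first two regimes the argument is essentially the same as in the proof of Proposition \ref{amenableC}, using Lemma \ref{philemD}(a) and (b) in place of Lemma \ref{philem}, together with the $132$-avoidance of $(\wh{w}_{k+1},\ldots,\wh{w}_n)$ which follows from $\wh{w}$ being leading. The hardest new ingredient, and the main obstacle, lies at the seam: when $\beta_\ci\in\{0,1\}$ (so $\ci=m+1$ and $\type(\wh{w})>0$), the left descent statement has the exceptional configurations noted in Lemma \ref{philemD}(c), and these must be tracked through all modifications. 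Here we use that modifications preserve the value $w_{m+1}\in\{\pm 1\}$ up to its position, while the exceptional cases $(\cdots\ov{1}\cdots 2\cdots)$ and $(\cdots 1\cdots 2\cdots)$ correspond exactly to the improper configurations ruled out by the properness hypothesis on $\wh{w}$ and the structure of $\sigma^\pm$ (which does not contain $s_1$ or $s_\Box$).

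Finally, when $\f_j=1$ and $w_1<-|w_2|$, the right descent ``at position $1$'' of $w$ is properly a right descent of $\iota(w)$ rather than of $w$; this reflects the fact that our shape conventions for type 2 elements are defined via $\iota$. The verification in this subcase is immediate from the definition of $\iota$ and Lemma \ref{WDlem}(a). The properness of $\wh{w}$ guarantees that the anomalous exchanges involving $s_\Box\leftrightarrow s_1$ that would break compatibility of $\kk$ in Proposition \ref{gampropDN} (as witnessed by Example \ref{321ex}) never occur during the modification process, so the induction proceeds without obstruction. Combining these ingredients yields the stated formula for $\DS_w$ and the descent assertions for every critical index.
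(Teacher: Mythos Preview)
Your overall strategy matches the paper's: start from Corollary \ref{kdominD} for the leading element $\wh{w}$, then apply the left divided differences $\partial^y_i$ with $i\geq 2$ coming from the canonical reduced decomposition of $\ome$, using Propositions \ref{prop1new}(a), \ref{prop1hnew}(a) and the Leibnitz rule \eqref{LeibRND}; track the descent assertions through the $[i,j]$-moves exactly as in the proof of Proposition \ref{amenableC}. The formula for $\DS_w$ and the right-descent statements go through as you outline.

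There is, however, a concrete error in your handling of the seam. You assert that the exceptional configurations $(\cdots\ov{1}\cdots 2\cdots)$ and $(\cdots 1\cdots 2\cdots)$ ``correspond exactly to the improper configurations ruled out by the properness hypothesis on $\wh{w}$''. This is false: these configurations are not excluded --- they appear in the \emph{statement} of the proposition as the ``unless'' clauses, and they do occur for proper leading elements (for instance $\wh{w}=(3,1,2)\in\wt{W}_3$ is proper and leading with $\beta=(1,2)$, yet $1$ is not a left descent). Lemma \ref{philemD}(c) does not eliminate these cases; it identifies them as precisely the exceptions. What properness actually buys, and what your argument is missing, is the assertion that a critical index $\ci$ with $\beta_{\ci+1}=\beta_\ci+1$ cannot equal $m$. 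Indeed, $\beta_{m+1}=\beta_m+1$ forces $\beta_m=-1$, $\beta_{m+1}=0$, hence $|\wh{w}_1|>2$; since $\wh{w}$ is proper and the truncated A-code ${}^k\wh{\gamma}$ is a partition, one checks that no $j\geq 1$ satisfies $\wh{\gamma}_{k+j}=m$, whence $\wh{\xi}_m=\wh{\xi}_{m+1}$ and (using $\phi_m=\phi_{m+1}=k$ from Lemma \ref{philemD}) $\wh{\la}_m=\wh{\la}_{m+1}+1$, contradicting criticality of $m$. This is what makes the dichotomy $\ci<m$ versus $\ci>m$ exhaustive in the subsequent analysis. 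You should replace your seam paragraph by this argument, invoking Lemma \ref{philemD}(c) only to \emph{record} the exceptions at $\ci=m+1$, not to exclude them.
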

\begin{proof}
Suppose that the truncated A-code of $\wh{w}$ is 
\[
{}^k\wh{\gamma}=(p_1^{n_1},\ldots, p_t^{n_t})
\]
for some parts $p_1>p_2>\cdots > p_t>0$,
and we let $d_j:=n_1+\cdots+n_j$ for $j\in [1,t]$. Then we have
\[
\wh{\xi}=(d_t^{p_t},d_{t-1}^{p_{t-1}-p_t},\ldots,d_1^{p_1-p_2})
\]
and it follows that 
\[
\wh{w}_{k+1}=u_{p_1+1},\ \wh{w}_{k+d_1+1}=u_{p_2+1},\ \ldots,
\ \wh{w}_{k+d_{t-1}+1}=u_{p_t+1}
\]
and $\wh{w}_j<\wh{w}_{j+1}$ for all $j\notin \{k,
k+d_1,\ldots,k+d_t\}$. Recall that $1$ is a right descent of $w$ if
and only if $w_1>w_2$, and $\Box$ is a right descent of $w$ if and
only if $w_1<-w_2$.  Hence, if the primary index $k$ equals $1$, then
$k$ is not a right descent of $w$ if and only if and $w_1<-|w_2|$, in
which case $\type(w)=2$ and $k$ is a right descent of $\iota(w)$.  We
deduce that the set of components of $k^{n-k}+\wh{\xi}$ coincides with
the set of all positive right descents of $\wh{w}$, or of
$\iota(\wh{w})$ if $\wh{w}_1<-|\wh{w}_2|$.

If $\ci\in [1,\ell]$ is a critical index, we have shown that
$k+\wh{\xi}_\ci$ is a right descent of $\wh{w}$, except in the case
when $k+\wh{\xi}_\ci=1$ and $\wh{w}_1<-|\wh{w}_2|$, when
$k+\wh{\xi}_\ci$ is a right descent of $\iota(\wh{w})$.  We claim that
$i:=|g_\ci|=|\beta_\ci|$ is a left descent of $\wh{w}$, unless $i=0$
and $\wh{w}=(\cdots \ov{1} \cdots 2 \cdots)$, or $i=1$ and
$\wh{w}=(\cdots 1 \cdots 2 \cdots)$. By Lemma \ref{philemD}, we may
assume that $\beta_\ci\neq 0$ and $\beta_{\ci+1}=\beta_\ci+1$. 

We first prove that $\ci\neq m$. Indeed, $\beta_{m+1}=\beta_m+1$ implies that
$\beta_m=-1$ and $\beta_{m+1}=0$, so in particular $|\wh{w}_1|>2$. Since
$\wh{w}$ is proper and ${}^k\wh{\gamma}$ is a partition, it follows that 
there is no $j\geq 1$ such that $\wh{\gamma}_{k+j}=m$. This implies that 
$\xi_m=\xi_{m+1}$, and since $\phi_m=\phi_{m+1}=k$ by Lemma \ref{philemD}(b), we 
deduce that $\la_m=\la_{m+1}+1$, which contradicts the fact that $\ci$ is a
critical index.

Suppose that $\ci < m$ and let $\wh{\mu}:=\mu(\wh{w})$. Then we have
$\wh{\la}_\ci>\wh{\la}_{\ci+1}+1$ and
$\wh{\mu}_\ci=\wh{\mu}_{\ci+1}+1$, so (\ref{pxmND}) gives
$\wh{\xi}_\ci>\wh{\xi}_{\ci+1}$. We therefore have $\ci=p_j$ for some
$\ci\in [1,t]$, and hence $i=\wh{\mu}_{p_j}-1 =
\wh{\mu}_{p_j+1}=-u_{p_j+1}=-\wh{w}_{k+d_{j-1}+1}$. Since we have
\[
\wh{w}_{k+1} > \wh{w}_{k+d_1+1} >\cdots > \wh{w}_{k+d_{j-1}+1} =-i,
\]
and the sequence $(\wh{w}_{k+1},\ldots,\wh{w}_n)$ is $132$-avoiding,
we conclude that $\wh{w}^{-1}(-i) = k+d_{j-1}+1 < \wh{w}^{-1}(-i-1)$, 
as desired.

Suppose next that $\ci>m$. Then we have $\wh{\la}_\ci>\wh{\la}_{\ci+1}$, so
Lemma \ref{philemD}(b) and equation (\ref{pxmND}) imply that
$\wh{\xi}_\ci>\wh{\xi}_{\ci+1}$. We deduce that $\ci=p_j$ for some $j$,
hence $i+1=u_{p_j+1}$ and the claim follows.

According to Corollary  \ref{kdominD}, we have
\begin{equation}
\label{propDcor}
\DS_{\wh{w}} = 2^{-\ell(\mu(\wh{w}))}\,R^{D(\wh{w})} \star {}^{k^{n-k}+\wh{\xi}}\wh{c}^{\,\beta(\wh{w})}_{\la(\wh{w})}, 
\end{equation}
so the proposition holds for leading elements.  Suppose next that
$w:=\ome\wh{w}$ is a modification of $\wh{w}$. Then repeated
application of (\ref{Dddeq}), Propositions \ref{prop1new}(a),
\ref{prop1hnew}(a), and the left Leibnitz rule in equation
(\ref{propDcor}) give
\[
\DS_w = 2^{-\ell(\mu(w))}\,R^{D(w)} \star {}^{k^{n-k}+\wh{\xi}}\wh{c}^{\,\beta(w)+\wh{\xi}-\xi}_{\la(w)}.
\]
It remains to check the last assertion, about the left and right 
descents of $w$. This is done exactly as in the proof of Proposition
\ref{amenableC}.
\end{proof}

\begin{thm}
\label{TamvexD}
For any amenable element $w\in \wt{W}_\infty$, we have 
\begin{equation}
\label{DSeq}
\DS_w = 2^{-\ell(\mu(w))}\,R^{D(w)} \star {}^{\f(w)}\wh{c}_{\la(w)}^{\,\g(w)}
\end{equation}
in $\Gamma'[X,Y]$.
\end{thm}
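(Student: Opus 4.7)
The plan is to mirror the proof of Theorem \ref{TamvexC}, working now in type D with the alternating Lemmas \ref{commuteAD}(b) and \ref{commuteCD}(b) in place of their type C analogues. We may assume the hypotheses of Proposition \ref{amenableD}, so that $w = \ome\wh{w}$ is a modification of a leading element $\wh{w}$ and
\[
\DS_w = 2^{-\ell(\mu(w))}\,R^{D(w)} \star {}^{k^{n-k}+\wh{\xi}}\wh{c}^{\beta+\wh{\la}-\la}_{\la}.
\]
The goal is to rewrite this intermediate expression, whose right flag entries $k^{n-k}+\wh{\xi}$ and left flag entries $\beta+\wh{\la}-\la$ vary across each block cut out by successive critical indices, into the target expression with flags $\f(w)$ and $\g(w)$, whose entries are constant on each such block, taking values $k + \wh{\xi}_\ci$ and $g_\ci := \beta_\ci + \wh{\xi}_\ci - \xi_\ci$ respectively. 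I would proceed by descending induction on the critical indices $\ci \in [1,\ell]$, and for each fixed $\ci$ by an inner induction on $\ci - j$ for $j \leq \ci$.

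For $j < \ci$, the definitions force $\xi_j = \cdots = \xi_\ci$ with $\beta_j,\ldots,\beta_\ci$ consecutive, and either $\la_j = \la_\ci + (\ci - j)$ when $\ci \leq m$, or $\la_j = \cdots = \la_\ci$ when $\ci > m$. Since the sequence $g := \beta + \wh{\xi} - \xi$ is weakly increasing by Proposition \ref{amenableD}, each adjacent pair $(r, r+1)$ with $r \in [j, \ci-1]$ falls into one of two local configurations: either $\wh{\xi}_r = \wh{\xi}_{r+1}$ with $g_r = g_{r+1} - 1$, or $\wh{\xi}_r = \wh{\xi}_{r+1} + 1$ with $g_r = g_{r+1}$. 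Both are precisely the situations handled by the alternating lemmas of Section \ref{ro}. The inductive step then invokes Lemma \ref{commuteAD}(b) when the pair sits above the boundary index governing the $\star$ operation, and Lemma \ref{commuteCD}(b) when it sits below. In each case one checks that $(j, j+1) \in D(w)$ or $(j, j+1) \notin D(w)$ as required by the lemma, together with the compatibility condition on pairs in $D(w)$ involving larger indices; both are routine given the consecutive nature of $\beta_j,\ldots,\beta_\ci$.

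The main obstacle I anticipate is the new type D critical index case $(\beta_\ci, \beta_{\ci+1}) = (1, 2)$, together with the delicate behavior around position $m+1$ where the $\star$ operation introduces the special symbols ${}^rb_r^s$, ${}^r\wt{b}_r^s$, or ${}^ra_r^s$ depending on $\type(w)$ and on whether the enclosing raising operator contains factors $R_{i, m+1}$ or $R_{m+1, j}$. The compatibility condition on $\kk$ ensured by Proposition \ref{gampropDN}, together with the identities ${}^rb_r^1 = {}^rc_r^1 - {}^r\wt{b}_r$ and ${}^r\wt{b}_r^1 = {}^rc_r^1 - {}^rb_r$, must be used to verify the transitional step crossing $m+1$. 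The vanishing relation (\ref{Drel2}) in Lemma \ref{commuteCD}(a) will eliminate the extra $\frakd$-type contribution produced by the D-specific relation (\ref{fundrelD}) at this boundary. Once these transitional cases around $m+1$ are settled, the remaining inductive steps proceed exactly as in type C, completing the proof.
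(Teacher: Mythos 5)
Your proposal follows the paper's proof essentially verbatim: it starts from Proposition \ref{amenableD}, analyzes each block between consecutive critical indices to extract the two local configurations of the pair $(\wh{\xi}_r, g_r)$ versus $(\wh{\xi}_{r+1}, g_{r+1})$, and applies Lemmas \ref{commuteAD}(b) and \ref{commuteCD}(b) to replace the varying flag entries with the block-constant values defining $\f(w)$ and $\g(w)$. Your elaboration of the D-specific subtleties around position $m+1$ and the critical index case $(\beta_\ci,\beta_{\ci+1})=(1,2)$ is consistent with (and more explicit than) the paper's terse closing remark that the hypotheses of the lemmas and the relations (\ref{fundrelsC}), (\ref{fundrelD}) ``are all easily checked.''
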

\begin{proof}
We may assume that we are in the situation of Proposition
\ref{amenableD}, so that $w=\ome\wh{w}$, with $\wh{\la}=\la(\wh{w})$
and $\la=\la(w)$.  Suppose that $j\in [1,\ell]$ and let $\ci$ be the
least critical index of $w$ such that $\ci\geq j$. Then we have
$\la_j=\la_{j+1}=\cdots = \la_\ci$, if $\ci>m$, and $\la_j=\la_{j+1}+1
= \cdots = \la_\ci+(\ci-j)$, if $\ci\leq m$. Moreover, in either case,
we have $\xi_j=\cdots = \xi_\ci$, and the values
$\beta_j,\ldots,\beta_\ci$ are consecutive integers. As the sequence
$g:=\beta+\wh{\xi}-\xi$ is weakly increasing, we deduce that for any
$r\in [j,\ci-1]$, either (i) $\wh{\xi}_r=\wh{\xi}_{r+1}$ and
$g_r=g_{r+1}-1$, or (ii) $\wh{\xi}_r=\wh{\xi}_{r+1}+1$ and
$g_r=g_{r+1}$. Equation (\ref{DSeq}) follows from this and induction
on $\ci-j$, by using Lemmas \ref{commuteAD}(b) and \ref{commuteCD}(b)
in Proposition \ref{amenableD}.  The required conditions on $D(w)$ in
these two lemmas and the corresponding relations (\ref{fundrelsC}) and
(\ref{fundrelD}) are all easily checked.
\end{proof}

\subsection{Flagged eta polynomials}

In this section, we define a family of polynomials $\Eta_w$ indexed by
amenable elements $w\in \wt{W}_\infty$ that generalize the double eta
polynomials of \cite{T4}. As in Section \ref{ftps}, the polynomial
$\Eta_w$ represents an equivariant Schubert class in the
$T$-equivariant cohomology ring of the even orthogonal partial flag
variety associated to the right flag $\f(w)$.

For every $k\geq 1$, let
${}^k\frakb:=({}^k\wt{\frakb}_k,{}^k\frakb_1,{}^k\frakb_2,\ldots)$ and
${}^k\frakc:=({}^k\frakc_1,{}^k\frakc_2,\ldots)$ be families of
commuting variables, set ${}^k\frakb_0={}^k\frakc_0=1$ and
${}^k\frakb_p={}^k\frakc_p=0$ for each $p<0$, and let
$t:=(t_1,t_2,\ldots)$. These variables are related by the equations
\[
{}^k\frakc_p=
\begin{cases}
{}^k\frakb_p &\text{if $p< k$},\\
{}^k\frakb_k+{}^k\wt{\frakb}_k &\text{if $p=k$},\\
2\left({}^k\frakb_p\right) &\text{if $p> k$}.
\end{cases}
\]
For any $p,r\in \Z$ and for $s\in \{0,1\}$, 
define the polynomials ${}^k\frakc^r_p$ and ${}^k\fraka^s_p$ by
\[
{}^k\frakc^r_p:= \sum_{j=0}^p {}^k\frakc_{p-j} \, h^r_j(-t) \ \ \ \text{and} \ \ \
{}^k\fraka^s_p:= \frac{1}{2}\left({}^k\frakc_p\right)+\sum_{j=1}^p {}^k\frakc_{p-j} \, h^s_j(-t).
\]
Moreover, define
\[
{}^k\frakb^s_k:= {}^k\frakb_k+\sum_{j=1}^k {}^k\frakc_{k-j} \, h^s_j(-t) \ \ \ \text{and} \ \ \
{}^k\wt{\frakb}^s_k:= {}^k\wt{\frakb}_k+\sum_{j=1}^k {}^k\frakc_{k-j} \, h^s_j(-t).
\]

For any integer sequences $\al$, $\rho$, $\kk$ with $\kk_i\geq 1$ for
each $i$, let
\[
{}^\kk\wh{\frakc}_\al^{\,\rho}:=
{}^{\kk_1}\wh{\frakc}_{\al_1}^{\,\rho_1}\,
{}^{\kk_2}\wh{\frakc}_{\al_2}^{\,\rho_2}\cdots
\]
where, for each $i\geq 1$, 
\[
{}^{\kk_i}\wh{\frakc}_{\al_i}^{\,\rho_i}:= {}^{\kk_i}\frakc_{\al_i}^{\rho_i} + 
\begin{cases}
(2({}^{\kk_i}\wt{\frakb}_{\kk_i})-
  {}^{\kk_i}\frakc_{\kk_i})e^{\al_i-\kk_i}_{\al_i-\kk_i}(-t) &
  \text{if $\rho_i = \kk_i - \al_i < 0$ and $i$ is odd}, \\
  (2({}^{\kk_i}\frakb_{\kk_i})-{}^{\kk_i}\frakc_{\kk_i})e^{\al_i-\kk_i}_{\al_i-\kk_i}(-t)
  & \text{if $\rho_i = \kk_i - \al_i < 0$ and $i$ is even}, \\
  0 & \text{otherwise}.
\end{cases}
\]

If $w\in \wt{W}_n$ is amenable with left flag $\f(w)$ and right flag
$\g(w)$, the {\em flagged double eta polynomial} $\Eta_w(\frakc\, |\, t)$
is defined by
\begin{equation}
\label{fletaeq}
\Eta_w(\frakc\, |\, t):= 2^{-\ell(\mu(w))}\,R^{D(w)}\star {}^{\f(w)}\wh{\frakc}_{\la(w)}^{\,\g(w)},
\end{equation}
where the action $\star$ of the raising operator expression $R^{D(w)}$
is as in Definition \ref{Etadef}.  The {\em flagged single eta
polynomial} is given by $\Eta_w(\frakc):=\Eta_w(\frakc\, |\, 0)$.  If
$w$ is a leading element, then (\ref{fletaeq}) can be written in the
`factorial' form
\[
\Eta_w(\frakc\, |\, t)= 2^{-\ell(\mu(w))}\,R^{D(w)}\star {}^{\f(w)}\wh{\frakc}_{\la(w)}^{\,\be(w)}.
\]
When $w$ is a $k$-Grassmannian element, the above formulas specialize
to the double eta polynomial $\Eta_\la(\frakc\, |\, t)$ found in
\cite{T4}; here $\la$ is the typed $k$-strict partition corresponding
to $w$.  Moreover, the single eta polynomial $\Eta_\la(\frakc)$ agrees
with that introduced in \cite{BKT3}; see also \cite{T2, T3}.

\subsection{Orthogonal degeneracy loci}
\label{odl}

\subsubsection{Odd orthogonal loci}

Let $E\to \X$ be a vector bundle of rank $2n+1$ on a smooth complex
algebraic variety $\X$. Assume that $E$ is an {\em orthogonal} bundle,
i.e.\ $E$ is equipped with an everywhere nondegenerate symmetric form
$E\otimes E\to \C$. Let $w\in W_n$ be amenable of shape $\la$, and let
$\f$ and $\g$ be the left and right flags of $w$, respectively. Define
a new sequence $\ov{\g}$ by setting $\ov{\g}_j:= \g_j$, if $\g_j\geq 0$, and
$\ov{\g}_j:=\g_j-1$, if $\g_j<0$.  Consider two complete flags of
subbundles of $E$
\[
0 \subset E_1\subset \cdots \subset E_{2n+1}=E \ \ \, \mathrm{and} \, \ \ 
0 \subset F_1\subset \cdots \subset F_{2n+1}=E
\]
with $\rank E_r=\rank F_r=r$ for each $r$, while $E_{n+s}=E_{n+1-s}^{\perp}$ 
and $F_{n+s}=F_{n+1-s}^{\perp}$ for $1\leq s \leq n$. 

There is a group monomorphism $\zeta':W_n\hra S_{2n+1}$ with image
\[
\zeta'(W_n)=\{\,\om\in S_{2n+1} \ |\ \om_i +\om_{2n+2-i} = 2n+2,
 \ \ \text{for all}  \ i\,\}.
\]
The map $\zeta'$ is determined by setting,
for each $w=(w_1,\ldots,w_n)\in W_n$ and $1\leq i \leq n$,
\[
\zeta'(w)_i :=\begin{cases}
             n+1-w_{n+1-i} & \text{if $w_{n+1-i}$ is unbarred}, \\
             n+1+\ov{w}_{n+1-i} & \text{otherwise}.
             \end{cases}
\]
Define the {\em degeneracy locus}
$\X_w\subset \X$ as the locus of $x \in \X$ such that
\[
\dim(E_r(x)\cap F_s(x))\geq \#\,\{\,i \leq r
\ |\ \zeta'(w)_i > 2n+1-s\,\} \ \, \mathrm{for} \ 1\leq r \leq n, \, 1\leq s \leq 2n.
\]
As in the symplectic case, we assume that $\X_w$ has pure codimension
$\ell(w)$ in $\X$, and give a formula for the class $[\X_w]$ in
$\HH^{2\ell(w)}(\X)$.

\begin{thm}
\label{dbleBloci}
For any amenable element $w\in W_n$, we have
\begin{align*}
\label{BChern}
[\X_w] &= 2^{-\ell(\mu(w))}\,\Theta_w(E-E_{n-\f}-F_{n+1+\ov{\g}}) \\
&= 2^{-\ell(\mu(w))}\,R^{D(w)}\, c_{\la}(E-E_{n-\f}-F_{n+1+\ov{\g}})
\end{align*}
in the cohomology ring $\HH^*(\X)$.
\end{thm}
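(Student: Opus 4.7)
The plan is to reduce Theorem \ref{dbleBloci} to the symplectic Theorem \ref{TamvexC} via the relation $\BS_w = 2^{-s(w)}\CS_w$ recalled at the start of the B/D preliminaries, where $s(w) = \#\{j : w_j<0\} = \ell(\mu(w))$. Combining this identity with Theorem \ref{TamvexC} yields immediately the polynomial identity
\[
\BS_w = 2^{-\ell(\mu(w))}\, R^{D(w)}\, {}^{\f(w)}c_{\la(w)}^{\g(w)}
\]
in $\Gamma[X,Y]$. The advantage of this approach is that all of the delicate combinatorial content of the type C theory — the notion of amenable element, the truncated A-code, the denominator set $D(w)$, the critical indices, and the flags $\f(w)$ and $\g(w)$ — transfers verbatim to the odd orthogonal setting, since $W_n$ is the Weyl group for both types C and B.

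Next I would apply the type B geometrization map of \cite[Sec.\ 10]{IMN} to both sides of this identity, following the pattern laid out for the symplectic case in \cite[Sec.\ 4.2]{T1}. By construction, this ring homomorphism $\Gamma[X,Y] \to \HH^*(\X)$ sends the universal Schubert polynomial $\BS_w$ to the class $[\X_w]$ of the odd orthogonal degeneracy locus in $\HH^{2\ell(w)}(\X)$. On the right hand side, the $X$ and $Y$ variables become (up to sign) Chern roots coming from the flags $E_\bullet$ and $F_\bullet$, while the generators $c_p$ of $\Gamma$ become the appropriate Chern classes of $E$, with index shifts dictated by the isotropy relation $E_{n+s}=E_{n+1-s}^\perp$ and the corresponding reparameterization supplied by the type B embedding $\zeta':W_n\hookrightarrow S_{2n+1}$. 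Consequently, as in the interpretation of (\ref{CChern}), the expression $R^{D(w)}\,\frakc_\la$ is sent to $R^{D(w)}\, c_\la(E-E_{n-\f}-F_{n+1+\g})$, where the image of a noncommutative monomial $\frakc_\al$ is $\prod_j c_{\al_j}(E-E_{n-\f_j}-F_{n+1+\g_j})$.

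The principal technical point is confirming that the correct shift on the right flag is $F_{n+1+\g}$ rather than $F_{n+\g}$ as in type C; this stems from the difference between the odd orthogonal and symplectic isotropy conditions ($E_{n+s}=E_{n+1-s}^\perp$ versus $E_{n+s}=E_{n-s}^\perp$) and is already built into the geometrization construction of \cite{IMN}. Once this bookkeeping is verified, the scalar $2^{-\ell(\mu(w))}$ pulls through the $\Z$-linear geometrization map unchanged, and Theorem \ref{dbleBloci} follows at once from the displayed polynomial identity.
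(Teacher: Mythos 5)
Your proof matches the paper's: the paper likewise derives Theorem \ref{dbleBloci} from the symplectic identity (\ref{CSeq}) via the relation $\BS_w = 2^{-s(w)}\CS_w$ stated in the B/D preliminaries (with $s(w)=\ell(\mu(w))$) and the type B geometrization map of \cite[Sec.\ 10]{IMN}, and your accounting of the $F_{n+1+\g}$ shift is the correct explanation of the only cosmetic change from type C.
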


Theorem \ref{dbleBloci} is derived from equation (\ref{CSeq}) in the
same way as Theorem \ref{dbleCloci}, using the type B geometrization
map of \cite[Sec.\ 10]{IMN}; compare with \cite[Sec.\ 6.3.1]{T3}.

\subsubsection{Even orthogonal loci}

Let $E\to \X$ be an orthogonal vector bundle of rank $2n$ on a
smooth complex algebraic variety $\X$. Let $w\in \wt{W}_n$ be an amenable
element of shape $\la$, and let $\f$ and $\g$ be the left and right
flags of $w$, respectively. Two maximal isotropic subbundles $L$ and
$L'$ of $E$ are said to be in the same family if $\mathrm{rank}(L\cap
L')\equiv n \, (\text{mod 2})$. Consider two complete flags of subbundles of $E$
\[
0 \subset E_1\subset \cdots \subset E_{2n}=E \ \ \, \mathrm{and} \, \ \ 
0 \subset F_1\subset \cdots \subset F_{2n}=E
\]
with $\rank E_r=\rank F_r=r$ for each $r$, while
$E_{n+s}=E_{n-s}^{\perp}$ and $F_{n+s}=F_{n-s}^{\perp}$ for $0\leq s <
n$.  We assume that $E_n$ is in the same family as $F_n$, if $n$ is
even, and in the opposite family, if $n$ is odd.

We have a group monomorphism $\zeta:\wt{W}_n\hra S_{2n}$, defined by
restricting the map $\zeta$ of Section \ref{sdl} to $\wt{W}_n$. Let
$\wt{w}_0$ denote the longest element of $\wt{W}_n$, and define the
{\em degeneracy locus} $\X_w\subset \X$ as the closure of the locus of
$x \in \X$ such that
\[
\dim(E_r(x)\cap F_s(x)) = \#\,\{\,i \leq r
\ |\ \zeta(\wt{w}_0w\wt{w}_0)_i > 2n-s\,\} \ \,  
\mathrm{for} \ 1\leq r \leq n-1, \, 1\leq s \leq 2n
\]
with the reduced scheme structure. Assume further that $\X_w$ has
pure codimension $\ell(w)$ in $\X$, and consider its cohomology class
$[\X_w]$ in $\HH^{2\ell(w)}(\X)$.

\begin{thm}
\label{dbleDloci}
For any amenable element $w\in \wt{W}_n$, we have
\begin{equation}
\label{DChern}
[\X_w] = \Eta_w(E-E_{n-\f}-F_{n+\g}) =
2^{-\ell(\mu(w))}\,R^{D(w)}\star \wh{c}_{\la}(E-E_{n-\f}-F_{n+\g})
\end{equation}
in the cohomology ring $\HH^*(\X)$.
\end{thm}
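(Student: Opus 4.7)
The plan is to deduce Theorem \ref{dbleDloci} from Theorem \ref{TamvexD}, which establishes the raising-operator formula (\ref{DSeq}) for the type D double Schubert polynomial $\DS_w$ in $\Gamma'[X,Y]$, by transporting this universal identity to $\HH^*(\X)$ via the type D geometrization map of \cite[Sec.\ 10]{IMN}. This parallels the proof of Theorem \ref{dbleCloci} from (\ref{CSeq}) in type C, and of Theorem \ref{dbleBloci} in type B; compare with \cite[Sec.\ 6.3.1]{T3}.

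The key steps are as follows. First, one uses the two isotropic flags on $\X$ to define a classifying morphism $\X\to \cF\ell(E)\times_\X \cF\ell(E)$ into a product of two complete even orthogonal flag bundles, with connected component specified by the relative family of $E_n$ and $F_n$ as in the hypotheses. The Bruhat decomposition of the target is indexed by $\wt{W}_n$, and under the assumption $\codim(\X_w,\X)=\ell(w)$, the class $[\X_w]$ equals the pullback of the universal Schubert class indexed by $\wt{w}_0 w \wt{w}_0$. Second, the geometrization map of \cite[Sec.\ 10]{IMN} sends $\DS_w$ to this pullback; it sends each $x_i$ and $y_i$ to the Chern root of the appropriate successive quotient of the two flags, and sends the generators $c_p$, $b_p$, $\wt{b}_p$ of $\Gamma'$ to Chern-class combinations built from $E$, $E_n$, and $F_n$. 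Substituting (\ref{DSeq}) and unpacking the flagging convention which interprets $\frakc_{\al_j}$ as $c_{\al_j}(E-E_{n-\f_j}-F_{n+\g_j})$ then produces the right hand side of (\ref{DChern}).

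The main obstacle will be verifying that the geometrization intertwines with the type D $\star$-action of Definition \ref{Etadef}. This action differs from the naive raising-operator substitution in two respects: it inserts $\wh{c}$-corrections, namely an Euler-class term when $\upsilon_i=\al_i-\rho_i<0$, in slots untouched by the raising operator, and at the critical index $m+1$ it distinguishes $b_{m+1}$ from $\wt{b}_{m+1}$ according to $\type(\la(w))$, choosing between them depending on whether the raising operator crosses the $(m+1)$-st slot. Geometrically, the first of these reflects the Euler sequence for the tautological subbundles along each flag, while the second encodes the choice of family for the maximal isotropic subbundles $E_n$ and $F_n$ specified in the hypotheses on $\X$. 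Once these compatibilities are in place, arguing along the lines of the Kempf--Laksov-type identities for the double eta polynomials in \cite{T4} and the type B geometrization in \cite[Sec.\ 6.3.1]{T3}, combining Theorem \ref{TamvexD} with the geometrization map yields (\ref{DChern}).
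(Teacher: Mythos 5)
Your proposal is correct and follows the same route as the paper: the paper likewise proves Theorem \ref{dbleDloci} by applying the type D geometrization map of \cite[Sec.\ 10]{IMN} to the raising operator formula (\ref{DSeq}) of Theorem \ref{TamvexD}, with the paper citing \cite[Sec.\ 7.4]{T3} (rather than Sec.\ 6.3.1, which is the type B reference) for the supporting computations that check compatibility of the geometrization with the $\star$-action and the typed substitutions for $b_r$, $\wt{b}_r$. The paper's own argument is much terser than your write-up, but the expanded discussion you give of the classifying morphism, the Chern-class substitutions, and the two novelties of the type D $\star$-action is an accurate account of what those citations provide.
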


The Chern polynomial in (\ref{DChern}) is defined by employing the
substitutions
\begin{gather*}
{}^r\frakb_p \longmapsto \begin{cases}
c_p(E-E_{n-r}-F_n)  & \text{if $p<r$}, \\
\dis\frac{1}{2} c_p(E-E_{n-r}-F_n)  & \text{if $p>r$},
\end{cases} \\
{}^r\frakb_r\longmapsto\frac{1}{2}(c_r(E-E_{n-r}-F_n)+c_r(E_n-E_{n-r})), \\
{}^r\wt{\frakb}_r\longmapsto\frac{1}{2}(c_r(E-E_{n-r}-F_n)-c_r(E_n-E_{n-r}))
\end{gather*}
in equation (\ref{fletaeq}), for any integer $p$ and $r\geq 1$. The
proof of Theorem \ref{dbleDloci} is obtained by applying the type D
geometrization map of \cite[Sec.\ 10]{IMN} to equation (\ref{DSeq}),
and using the computations in \cite[Sec.\ 7.4]{T3}.

\appendix

\section{Counterexamples to statements in \cite{AF2}}
\label{AFexs}

The following two examples exhibit errors in the proofs -- in all
types except type A -- and in the main type D result of \cite{AF2}.
We use the notation in op.\ cit.

\begin{exA1*}
We show that Lemma A.1(i) of \cite{AF2} is incorrect. Set
$\rho=(0,1,0)$, $\la=(2,1,1)$, $k=m=2$, and $n=\ell=3$. The
assumptions are that $c(2)=c(3)$, $c'(2)=c(2)(1+b_1)$, so that
$c'_j(2)=c_j(2)+b_1c_{j-1}(2)$ for each $j$, and $c'(i)=c(i)$ for
$i=1,3$.

We compute that $R^{(\rho,\ell)} = (1+R_{12})^{-1}(1-R_{12})(1-R_{13})(1-R_{23})$
and hence
\[
\Ti_\la^{(\rho)}(c)= 
c_2(1)c_1(2)c_1(2) -2c_3(1)c_1(2)+c_3(1)c_1(2)-c_2(1)c_2(2)
\]
while 
\[
\Ti_\la^{(\rho)}(c')=c_2(1)(c_1(2)+b_1)c_1(2) -2c_3(1)c_1(2)+c_3(1)(c_1(2)+b_1)
-c_2(1)(c_2(2)+b_1c_1(2)).
\]
It follows that
\[
\Ti_\la^{(\rho)}(c')-\Ti_\la^{(\rho)}(c) = 
b_1c_3(1)\neq 0.
\]
One can show similarly that Lemma A.1(ii) and Lemma A.2 of op.\ cit.\ are also wrong.
\end{exA1*}

\begin{exA2*}
We show that Theorem 4 of \cite{AF2} is false. Consider the type D
triple $\tau = (\mathrm{\bf k}, \mathrm{\bf p}, \mathrm{\bf q})
= ((1,2), (2,1), (0,-2))$, which corresponds to the Weyl group
element $(\ov{3},2,\ov{1})\in \wt{W}_3$ (or to the element $(3,2,1)$,
depending on the type convention). We have $\rho=(0,0)$ and
$\la=(2,1)$, while $\ell=2$ and $r=1$, so $\wt{R}^{(\rho,r,\ell)} =
1-R_{12}$ and
\[
\Eta^{\rho(\tau)}_{\la(\tau)}(c(1),c(2)) = 
(1-R_{12})(c(1)_2c(2)_1) = c_2(1)c_1(2) - c_3(1)c_0(2).
\]
The computations of Example \ref{321ex} show that
$2\,[\Omega_{\bf \tau}]\neq \Eta^{\rho(\tau)}_{\la(\tau)}(c(1),c(2))$.
\end{exA2*}


\begin{thebibliography}{BKT2}


\bibitem[AF1]{AF1} D. Anderson and W. Fulton : {\em Degeneracy loci,
  Pfaffians, and vexillary signed permutations in types B, C, and D},
preprint, arXiv:1210.2066, 2012.

\bibitem[AF2]{AF2} D. Anderson and W. Fulton : 
{\em Chern class formulas for classical-type degeneracy loci},
Compositio Math. {\bf 154} (2018), 1746--1774.

\bibitem[BH]{BH} S. Billey and M. Haiman :
{\em Schubert polynomials for the classical groups},
J. Amer. Math. Soc. {\bf 8} (1995), 443--482.

\bibitem[BL]{BL} S. Billey and T. K. Lam : 
{\em Vexillary elements in the hyperoctahedral group},
J. Algebraic Combin. {\bf 8} (1998), 139--152.

\bibitem[BKT1]{BKT1} A. S. Buch, A. Kresch, and H. Tamvakis :
{\em Quantum Pieri rules for isotropic Grassmannians},
Invent. Math. {\bf 178} (2009), 345--405.

\bibitem[BKT2]{BKT2} A. S. Buch, A.  Kresch, and H. Tamvakis :
{\em A Giambelli formula for isotropic Grassmannians},
Selecta Math. (N.S.) {\bf 23} (2017), 869--914.

\bibitem[BKT3]{BKT3} A. S. Buch, A.  Kresch, and H. Tamvakis :
{\em A Giambelli formula for even orthogonal Grassmannians},
J. reine angew. Math. {\bf 708} (2015), 17--48.

\bibitem[F1]{F1} W. Fulton : 
{\em Flags, Schubert polynomials, degeneracy loci, and determinantal formulas},
Duke Math. J. {\bf 65} (1992), 381--420.

\bibitem[F2]{F2} W. Fulton :
{\em Determinantal formulas for orthogonal and symplectic degeneracy
loci}, J. Differential Geom. {\bf 43} (1996), 276--290.

\bibitem[FP]{FP} W. Fulton and P. Pragacz :
{\em Schubert varieties and degeneracy loci}, 
Lecture Notes in Math. {\bf 1689}, Springer-Verlag, Berlin, 1998.

\bibitem[G]{G} W. Graham :
{\em The class of the diagonal in flag bundles},  J. Differential Geom.  
{\bf 45} (1997), 471--487.

\bibitem[IM]{IM} T. Ikeda and T. Matsumura :
{\em Pfaffian sum formula for the symplectic Grassmannians}, 
Math. Z. {\bf 280} (2015), 269--306.

\bibitem[IMN]{IMN} T. Ikeda, L. C. Mihalcea, and H. Naruse :
{\em Double Schubert polynomials for the classical groups}, 
Adv. Math. {\bf 226} (2011), 840--886.

\bibitem[Ka]{Ka} M. Kazarian :
{\em On Lagrange and symmetric degeneracy loci}, Isaac Newton Institute for 
Mathematical Sciences Preprint Series, 2000.

\bibitem[Kn]{Kn} D. E. Knuth : 
{\em The Art of Computer Programming. Vol. 1.
Fundamental Algorithms}, Third edition, Addison-Wesley, 
Reading, MA, 1997.

\bibitem[Lam]{Lam} J. Lambert :
{\em Theta-vexillary signed permutations}, 
Electronic J. Comb. {\bf 25(4)} (2018), \#P4.53.

\bibitem[Las]{Las} A. Lascoux : 
{\em Classes de Chern des vari\'et\'es de drapeaux}, 
C. R. Acad. Sci. Paris S\'er. I Math. {\bf 295} (1982), 393--398.

\bibitem[LS1]{LS1} A. Lascoux and M.-P. Sch\"{u}tzenberger :
{\em Polyn\^{o}mes de Schubert}, C. R. Acad. Sci. Paris S\'er. I
Math. {\bf 294} (1982), 447--450.

\bibitem[LS2]{LS2} A. Lascoux and M.-P. Sch\"{u}tzenberger :
{\em Schubert polynomials and the Littlewood-Richardson rule}, 
Lett. Math. Phys. {\bf 10} (1985), 111--124.

\bibitem[M]{M} I. G. Macdonald :
{\em Notes on Schubert polynomials}, Publ. LACIM {\bf 6},
Univ. de Qu\'{e}bec \`{a} Montr\'{e}al, 
Montr\'{e}al, 1991.

\bibitem[PR]{PR} P. Pragacz and J. Ratajski :
{\em Formulas for Lagrangian and orthogonal degeneracy loci;
$\wt{Q}$-polynomial approach}, Compos. Math. {\bf 107}
(1997), 11--87.

\bibitem[R]{R} A. Reifegerste : 
{\em On the diagram of 132-avoiding permutations}, European J. Combin.
{\bf 24} (2003), 759--776.

\bibitem[Ste]{Ste} J. R. Stembridge :
{\em On the fully commutative elements of Coxeter groups},
J. Algebraic Combin. {\bf 5} (1996), 353--385. 

\bibitem[Stu]{Stu} C. Stump : 
{\em More bijective Catalan combinatorics on permutations and on
signed permutations}, J. Comb. {\bf 4} (2013), 419--447.

\bibitem[T1]{T1} H. Tamvakis :
{\em A Giambelli formula for classical $G/P$ spaces}, 
J. Algebraic Geom. {\bf 23} (2014), 245--278. 

\bibitem[T2]{T2} H. Tamvakis : 
{\em A tableau formula for eta polynomials},
Math. Annalen {\bf 358} (2014), 1005--1029.

\bibitem[T3]{T3} H. Tamvakis : 
{\em Giambelli and degeneracy locus formulas for classical $G/P$
spaces}, Mosc. Math. J. {\bf 16} (2016), 125--177.

\bibitem[T4]{T4} H. Tamvakis : 
{\em Double eta polynomials and equivariant Giambelli formulas},
J. London Math. Soc. {\bf 94} (2016), 209--229.

\bibitem[T5]{T5} H. Tamvakis : 
{\em Schubert polynomials and degeneracy locus formulas}, 
Schubert Varieties, Equivariant Cohomology and Characteristic Classes,
261--314, EMS Ser. Congr. Rep., Eur. Math. Soc., Z\"urich, 2018. 

\bibitem[T6]{T6}  H. Tamvakis :
{\em Schubert polynomials, theta and eta polynomials, and Weyl group
invariants}, Mosc. Math. J. {\bf 21} (2021), 191--226.

\bibitem[TW]{TW} H. Tamvakis and E. Wilson :
{\em Double theta polynomials and equivariant Giambelli formulas}, 
Math. Proc. Cambridge Philos. Soc. {\bf 160} (2016), 353--377. 

\bibitem[W]{W} E. Wilson : 
{\em Equivariant Giambelli formulae for Grassmannians}, Ph.D.\ thesis,
University of Maryland, College Park, 2010.


\end{thebibliography}
\end{document}